\newtheorem{theorem}{Theorem}[section]
\newtheorem{lemma}{Lemma}[section]
\numberwithin{equation}{section}
\newenvironment{proof}{{\noindent \bf Proof.}}{\hfill $\Box$}
\newenvironment{proof3.1}{{\noindent \bf Proof of Theorem 3.1.}}{\hfill $\Box$}
\begin{document}

\setlength{\baselineskip}{16pt}{\setlength\arraycolsep{2pt}}

\title{Stability of piezoelectric beams with magnetic effects of fractional derivative type and with/without thermal effects}

\author{Yanning An,\ \ Wenjun Liu\footnote{Corresponding author \ \   Email address: wjliu@nuist.edu.cn (W. J. Liu).}\medskip \ \ and Aowen Kong \\School of Mathematics and Statistics,
Nanjing University of Information Science \\
 and Technology, Nanjing 210044, China}


\date{}
\maketitle

\begin{abstract}
In this paper, we first consider the well-posedness and asymptotic behavior of a  one-dimensional piezoelectric beam system with control boundary conditions of fractional derivative type, which represent magnetic effects on the system. By introducing two new equations to deal with control boundary conditions of fractional derivative type, we obtain a new equivalent system. With the help of Lumer-Philips theorem, we also show the well-posedness of the system. We then prove the lack of exponential stability by a spectral analysis, and obtain the polynomial stability of the system by using a result of Borichev and Tomilov (Math. Ann. {\bf 347} (2010), 455--478). To find a more stable system, we then consider the stability of the above system  with additional thermal effects described by Fourier's law, and achieve the exponential stability for the new model by using the perturbed functional method.
\end{abstract}

\noindent {\bf 2010 Mathematics Subject Classification:} 35B40, 35L45, 93D15, 93D20. \\
\noindent {\bf Keywords:} piezoelectric beams, asymptotic behavior, fractional derivative, semigroup method.

\maketitle

\section{Introduction }

Piezoelectric materials have the inherent characteristic of piezoelectricity. Piezoelectric materials with asymmetric crystal structure can generate electric energy when they are mechanically compressed, resulting in piezoelectric effect. In recent years, piezoelectric materials have been used in the design of intelligent devices \cite{dag2014,guzhu2016,giu2008,ertinm2008}. There are two types of piezoelectric effect. The first is called direct piezoelectric effect, which is characterized by the internal generation of electric charge, by applying mechanical force \cite{ka1969,publi1996}. In the second case, it is called the inverse piezoelectric effect, the internal mechanical stress comes from the excitation generated by the external electric field.
 Since the above two effects are caused by the asymmetry of crystal structure, direct piezoelectric effect and reverse piezoelectric effect have the same working principle \cite{galdin2000}. That is, piezoelectric materials can convert mechanical motion and electric field. As we all know, machinery, electricity and magnetism can transform and influence each other. When piezoelectric materials are integrated into the components of electronic circuits, the piezoelectric materials driven by voltage, current or charge to the electrodes will produce mechanical motion and magnetic field affecting the stability of the system. Therefore, it is necessary to describe the influence of the interaction of these three effects (mechanical, electrical and magnetic) on the stability of piezoelectric system \cite{bsw1996,dwg2010,yang2006}. The equation of piezoelectric beam with magnetic effects is based on the mechanical behavior of beam by Mindlin-Timoshenko theory {\cite{bsw1996,dwg2010}} and the description of electromagnetic coupling by Maxwell equation.

The first work to consider the three effects (mechanical, electrical and magnetic) is due to Morris and \"{O}zer \cite{mo2013,mo2014}. They studied a piezoelectric system with only one boundary control, showed by
\begin{equation} \label{other1.2}
\begin{aligned}
&\rho v_{tt}-\alpha v_{xx}+\gamma\beta p_{xx}=0,&(x,t)\in(0,L)\times (0,T),\\
&\mu p_{tt}-\beta p_{xx}+\gamma\beta v_{xx}=0,&(x,t)\in(0,L)\times (0,T),
\end{aligned}
\end{equation}
with boundary conditions
\begin{equation} \label{other1.2b}
\begin{aligned}
&v(0,t)=\alpha v_{x}(L,t)-\gamma\beta p_{x}(L,t)=0,& t\in (0,T),\\
&p(0,t)=\beta p_{x}(L,t)-\gamma\beta v_{x}(L,t)+V(t)=0, & t\in (0,T),
\end{aligned}
\end{equation}
where $\rho$, $\alpha$, $\beta$, $\gamma$, $\mu$  denote the mass density per unit volume, the elastic stiffness, the beam coefficient of impermeability, the piezoelectric coefficient and the magnetic permeability, respectively; And $v(x,t)$, $p(x,t)$ and $V(t)=\frac{p_{t}(L,t)}{h}$ represent the transverse displacement of the beam and the total load of the electric displacement along the transverse direction at each point $x$ and the prescribed voltage on the beam electrodes respectively. Let $\alpha_{1}>0$ be the elastic stiffness of the model derived from the electrostatic and quasi-static methods of Euler Bernoulli small displacement (see \cite{mo2014}), and $\alpha,\beta$ and $\gamma$ satisfies $\alpha=\alpha_{1}+\gamma^{2}\beta$.
The authors showed that system \eqref{other1.2}-\eqref{other1.2b}  was not exponentially stable. At the same time, they also obtained an exact observability inequality on the boundary.

In \cite{rfajm2019}, Ramos et al. studied the system of piezoelectric beams with magnetic effects, showed by
\begin{equation} \label{other1.3}
\begin{aligned}
&\rho v_{tt}-\alpha v_{xx}+\gamma\beta p_{xx}=0,&(x,t)\in(0,L)\times (0,T),\\
&\mu p_{tt}-\beta p_{xx}+\gamma\beta v_{xx}=0,&(x,t)\in(0,L)\times (0,T),
\end{aligned}
\end{equation}
with boundary conditions
\begin{equation} \label{other1.3b}
\begin{aligned}
&v(0,t)=\alpha v_{x}(L,t)-\gamma\beta p_{x}(L,t)+\frac{\xi_{1}}{h}v_{t}(L,t)=0,& t\in (0,T),\\
&p(0,t)=\beta p_{x}(L,t)-\gamma\beta v_{x}(L,t)+\frac{\xi_{2}}{h}p_{t}(L,t)=0,& t\in (0,T),
\end{aligned}
\end{equation}
where $\xi_{i},i=1,2$ are positive constant feedback gains. The authors first divided the original system into a conservative system and an auxiliary system.
And they proved the observability inequality of conservative system on the boundary by using multiplier method.
Then, the exponential stability of system \eqref{other1.3}-\eqref{other1.3b} was provided by demonstrating the equivalence between observability inequality of conservative system and exponential stability of original system.

Recently, some researchers have studied Timoshenko system or hybrid system with control boundary conditions of fractional derivative type, see \cite{BB2017, BG2019, mary2019,tita2018,RPV2021}.
The fractional derivative in time direction can be regarded as the dissipation term of the system and make the solution converge to the equilibrium state, see \cite{bou2019}. Therefore, when being applied to the boundary, it can be regard as a controller that helps suppress unwanted vibrations.
Since control of fractional order type can lead to a more robust control performance and more adequate modeling, it plays an important role in practical application. At present, it has been widely used in bioengineering, circuit, signal processing, chemical process, control system and viscoelasticity \cite{pk2011}.
For example, in viscoelastic system, because the properties of viscoelastic materials include the properties of both elastic solids and viscous fluids, the stress at each point and at each moment in the motion of viscoelastic objects depends not only on the current value of strain, but also on the whole temporal prehistory of motion from $0$ to time $t$.
In this paper, fractional dissipation describes a magnetic effect controller, which aims to provide boundary dissipation to stabilize the system.

Since fractional calculus was first proposed in 1969, many authors have given a variety of definitions.
In this paper, we consider the exponential Caputo's fractional
derivative $\partial^{a,\eta}_{t}$ of order $a\in(0,1)$ with respect to time variable $t$ defined by
\begin{align*}
\partial^{a,\eta}_{t}f(t)=\frac{1}{\Gamma(1-a)}\int^{t}_{0}e^{-\eta(t-\tau)}(t-\tau)^{-a}\frac{d}{d\tau}f(\tau)d\tau,
\end{align*}
with $f\in L^{1}(I)$ and $t>0$. The fractional differentiation $I^{a,\eta}$ is inverse operation of fractional integration defined by
\begin{align*}
I^{a,\eta}f(t)=\frac{1}{\Gamma(a)}\int^{t}_{0}e^{-\eta(t-\tau)}(t-\tau)^{a-1}f(\tau)d\tau,
\end{align*}
with $f\in W^{1,1}(I)$ and $t>0$, and we know that $\partial^{a,\eta}_{t}f(t)=I^{1-a,\eta}f'(t)$.

In the first part of this paper, we consider the asymptotic behavior of the piezoelectric system without considering temperature. The system is showed by
\begin{equation} \label{problem1.1}
\begin{aligned}
&\rho V_{tt}-\alpha V_{xx}+\gamma\beta P_{xx}=0, &(x,t)\in(0,L)\times (0,T),\\
&\mu P_{tt}-\beta P_{xx}+\gamma\beta V_{xx}=0,  &(x,t)\in(0,L)\times (0,T),
\end{aligned}
\end{equation}
with the double boundary conditions
\begin{equation} \label{1.2}
\begin{aligned}
&V(0,t)=P(0,t)=0,& t\in (0,T),\\
&\alpha V_{x}(L,t)-\gamma\beta P_{x}(L,t)=-l_{1}\partial^{a,\eta}_{t}V(L,t),& t\in (0,T),\\
&\beta P_{x}(L,t)-\gamma\beta V_{x}(L,t)=-l_{2}\partial^{a,\eta}_{t}P(L,t),& t\in (0,T),
\end{aligned}
\end{equation}
and the initial conditions
\begin{align}\label{1.3}
\left(V(x,0),V_{t}(x,0),P(x,0),P_{t}(x,0)\right)=\left(V_{0}(x),V_{1}(x),P_{0}(x),P_{1}(x)\right),x\in(0,L).
\end{align}
where $V(x,t), P(x,t)$  represent the transverse displacement of the beam and the total load of the electric displacement along the transverse direction at each point $x$ respectively, $l_{1},l_{2}>0$ are positive constants feedback gains, $\eta>0,a\in(0,1)$ are constants. We shall study the polynomial stability of this piezoelectric beam system with magnetic effects of fractional derivative type.

In the above problem, we find that the piezoelectric beam system with control boundary conditions of fractional derivative type is not exponentially stable. In order to find a more stable system, we then consider the stability of the system \eqref{problem1.1}-\eqref{1.3} with thermal effects described by Fourier's law. That is, when the model is subject to the temperature variation, we consider the thermal effect acting on the transverse displacement equation of the beam, and  propose a new piezoelectric beam system given by
\begin{equation} \label{problem1.2}
\begin{aligned}
&\rho V_{tt}-\alpha V_{xx}+\gamma\beta P_{xx}+\delta\theta_{x}=0, &(x,t)\in(0,L)\times (0,T),\\
&\mu P_{tt}-\beta P_{xx}+\gamma\beta V_{xx}=0,  &(x,t)\in(0,L)\times (0,T),\\
&c\theta_{x}-k\theta_{xx}+\delta V_{xt}=0, &(x,t)\in(0,L)\times (0,T),
\end{aligned}
\end{equation}
with the double boundary conditions
\begin{equation} \label{1.5}
\begin{aligned}
&V(0,t)=P(0,t)=\theta_x(0,t)=\theta(L,t)=0,& t\in (0,T),\\
&\alpha V_{x}(L,t)-\gamma\beta P_{x}(L,t)=-l_{1}\partial^{a,\eta}_{t}V(L,t),& t\in (0,T),\\
&\beta P_{x}(L,t)-\gamma\beta V_{x}(L,t)=-l_{2}\partial^{a,\eta}_{t}P(L,t),& t\in (0,T),
\end{aligned}
\end{equation}
and the initial conditions
\begin{align}\label{1.6}
\left(V(x,0),V_{t}(x,0),P(x,0),P_{t}(x,0),\theta(x,0)\right)=\left(V_{0}(x),V_{1}(x),P_{0}(x),P_{1}(x),\theta_0(x)\right),x\in(0,L),
\end{align}
where $\theta(x, t)$ is the temperature (difference to a fixed constant reference temperature), $\kappa$ is the thermal conductivity, $c$ is the specific heat and $\delta$ is the product of the thermal expansion and elastic moduli.

The difficulty of these two problems lies in the treatment of boundary fractional damping. To solve this problem, we will first introduce two new functions to transform the system into an equivalent augmented system. Then we will analyze the well-posedness of the system by using the Lumer-Philips theorem. To deal with the stability analysis, we divide system \eqref{problem1.1}-\eqref{1.3} into a conservative system and a zero initial value system. Then, the polynomial stability of the system will be given by using a result of Borichev and Tomilov. Many scholars have used different methods to analyze the polynomial stability of some other systems, see \cite{liuzhao2019,bm2020,HR2020,kam2016,maycmg2019,tita2018,lkl2020,am2020,must2021,card2019,feng2017}.
Finally, for the asymptotic behavior analysis part of system \eqref{problem1.2}-\eqref{1.6}, we obtain the exponential stability of the system by using perturbed functional method.

The structure of this paper is as follows. In the next section, we will give the well-posedness of system \eqref{problem1.1}-\eqref{1.3}. In Section 3, we will show the lack of exponential stability. Next, we will obtain the polynomial stability of system \eqref{problem1.1}-\eqref{1.3} in Section 4. In Section 5, we will consider the well-posedness of  system \eqref{problem1.2}-\eqref{1.6}. Finally, we will show that system \eqref{problem1.2}-\eqref{1.6} is exponential stability in Section 6.

\section{Well-posedness for piezoelectric beams without thermal effects}

In this section, by using a semigroup approach, we establish well-posedness result for the piezoelectric beam \eqref{problem1.1} with control boundary conditions \eqref{1.2}. First, in order to transform the boundary conditions of fractional derivative type, we introduce two new functions $\phi_1, \phi_2$ which can plant model \eqref{problem1.1}-\eqref{1.3} into an augmented system. Therefore, we need the following theorem.
\begin{theorem}\label{thm2.1}
\textup{(\cite{mbo2006})}
Let $\mu$ be the function
\begin{align*}
\mu(\xi)=|\xi|^{(2a-1)/2},\quad -\infty<\xi<+\infty,\quad 0<a<1.
\end{align*}
Then the relation between the ``input'' $U_i,i=1,2$ and the ``output'' $O_i,i=1,2$ of system
\begin{align*}
\partial_{t}\phi_i(\xi,t)+\left(\xi^{2}+\eta\right)\phi_i(\xi,t)-U_i(t)\mu(\xi)&=0,\quad-\infty<\xi<+\infty,\quad\eta\geq0,\quad t>0,\\
\phi_i(\xi,0)&=0,\quad-\infty<\xi<+\infty,\\
O_i(t)&=\frac{\sin(a\pi)}{\pi}\int^{+\infty}_{-\infty}\mu(\xi)\phi_i(\xi,t)d\xi,
\end{align*}
is given by
\begin{align*}
O_i(t)=I^{1-a,\eta}U_i(t).
\end{align*}
\end{theorem}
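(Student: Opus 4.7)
The plan is to solve the parametrized linear ODE for $\phi_i(\xi,t)$ explicitly, substitute into the formula defining the output $O_i(t)$, interchange the order of integration, and evaluate the resulting $\xi$-integral in closed form using the reflection formula for the Gamma function. Since the system is linear in $U_i$, it suffices to do this for a single generic input $U$, dropping the index $i$.

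First I would observe that the evolution equation in $\phi$ is a first-order linear ODE in $t$ for each fixed $\xi$, with integrating factor $e^{(\xi^2+\eta)t}$ and zero initial data. Duhamel's formula then gives
\begin{equation*}
\phi(\xi,t)=\mu(\xi)\int_{0}^{t}e^{-(\xi^{2}+\eta)(t-\tau)}U(\tau)\,d\tau.
\end{equation*}
Plugging this into the definition of $O$, multiplying by the second factor $\mu(\xi)=|\xi|^{(2a-1)/2}$, and applying Fubini's theorem (which is justified because $U\in L^1_{\mathrm{loc}}$ and the Gaussian-type factor ensures absolute integrability on $(-\infty,\infty)\times[0,t]$) yields
\begin{equation*}
O(t)=\frac{\sin(a\pi)}{\pi}\int_{0}^{t}e^{-\eta(t-\tau)}U(\tau)\left(\int_{-\infty}^{+\infty}|\xi|^{2a-1}e^{-\xi^{2}(t-\tau)}\,d\xi\right)d\tau.
\end{equation*}

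The heart of the argument is the evaluation of the inner integral. Using evenness and the substitution $u=\xi^{2}(t-\tau)$, a direct computation gives
\begin{equation*}
\int_{-\infty}^{+\infty}|\xi|^{2a-1}e^{-\xi^{2}(t-\tau)}\,d\xi=(t-\tau)^{-a}\Gamma(a).
\end{equation*}
This is the step I expect to be the only real obstacle, since one has to be careful tracking the exponent $(2a-1)/2$ through the change of variables; the condition $0<a<1$ guarantees convergence at both $0$ and $\infty$. Finally, the Euler reflection formula $\Gamma(a)\Gamma(1-a)=\pi/\sin(a\pi)$ converts the prefactor into $1/\Gamma(1-a)$, so that
\begin{equation*}
O(t)=\frac{1}{\Gamma(1-a)}\int_{0}^{t}e^{-\eta(t-\tau)}(t-\tau)^{-a}U(\tau)\,d\tau=I^{1-a,\eta}U(t),
\end{equation*}
which is precisely the claimed identity. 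Restoring the index $i=1,2$ completes the proof.
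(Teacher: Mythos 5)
Your proof is correct and complete: the Duhamel representation of $\phi_i$, the application of Fubini, the evaluation $\int_{-\infty}^{+\infty}|\xi|^{2a-1}e^{-\xi^{2}s}\,d\xi=s^{-a}\Gamma(a)$, and the reflection formula combine exactly to produce the kernel $\frac{1}{\Gamma(1-a)}e^{-\eta(t-\tau)}(t-\tau)^{-a}$ appearing in the paper's definition of $I^{1-a,\eta}$. The paper itself does not prove this theorem but defers to the cited reference of Mbodje, and your argument is precisely the standard derivation given there, so there is nothing to add.
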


Then, taking $U_1(t)=V_t(L,t)$ and $U_2(t)=P_t(L,t)$, using Theorem \ref{thm2.1} and the fact that $\partial^{a,\eta}_t f(t)=I^{1-a,\eta}f'(t)$, we can obtain
\begin{align}\label{2.1}
\partial^{a,\eta}_tV(L,t)=I^{1-a,\eta}V_t(L,t)=I^{1-a,\eta}U_1=O_1(t)=\frac{\sin(a\pi)}{\pi}\int^{+\infty}_{-\infty}\mu(\xi)\phi_1(\xi,t)d\xi,
\end{align}
and
\begin{align}\label{2.101}
\partial^{a,\eta}_tP(L,t)=I^{1-a,\eta}P_t(L,t)=I^{1-a,\eta}U_2=O_2(t)=\frac{\sin(a\pi)}{\pi}\int^{+\infty}_{-\infty}\mu(\xi)\phi_2(\xi,t)d\xi.
\end{align}

In \cite{mbo2006}, there exists a detailed proof about Theorem \ref{thm2.1}. The following lemmas are the conclusions that we will use in the flow-up process of our proof.
\begin{lemma}\label{nuse1}
\textup{(\cite{mbo2006})} If $\lambda\in E$, where $E=\{\lambda\in C: Re\lambda+\eta>0\}\cup\{\lambda\in{C}:Im\lambda\neq0\}$. Then
\begin{align*}
\int^{+\infty}_{-\infty}\frac{\mu^{2}(\xi)}{\xi^{2}+\eta+|\lambda|}d\xi=\frac{\pi}{\sin(a\pi)}(\eta+|\lambda|)^{a-1}.
\end{align*}
\end{lemma}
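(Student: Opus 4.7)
The plan is to reduce the identity to the classical beta-integral evaluation
$\int_{0}^{\infty} \frac{u^{a-1}}{1+u}\,du=\frac{\pi}{\sin(a\pi)}$, valid for $0<a<1$. First I would note that the hypothesis $\lambda\in E$ is used only to guarantee that $c:=\eta+|\lambda|>0$, so that the denominator $\xi^{2}+c$ is bounded below away from $0$ and the integral converges (convergence at $\xi=0$ needs $2a-1>-1$, i.e.\ $a>0$, and convergence at $\pm\infty$ needs $2a-1-2<-1$, i.e.\ $a<1$; both are given).

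Since $\mu^{2}(\xi)=|\xi|^{2a-1}$ is even in $\xi$, I would rewrite
\begin{equation*}
\int_{-\infty}^{+\infty}\frac{\mu^{2}(\xi)}{\xi^{2}+c}\,d\xi
=2\int_{0}^{+\infty}\frac{\xi^{2a-1}}{\xi^{2}+c}\,d\xi.
\end{equation*}
Then I would perform the substitution $u=\xi^{2}/c$, so that $\xi=\sqrt{cu}$ and $d\xi=\tfrac{1}{2}\sqrt{c/u}\,du$. A direct computation of the exponents of $c$ and $u$ (both turn out to be $a-1$) yields
\begin{equation*}
2\int_{0}^{+\infty}\frac{\xi^{2a-1}}{\xi^{2}+c}\,d\xi
= c^{a-1}\int_{0}^{+\infty}\frac{u^{a-1}}{u+1}\,du.
\end{equation*}

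Finally I would invoke the classical identity $\int_{0}^{\infty}\frac{u^{a-1}}{1+u}\,du=\frac{\pi}{\sin(a\pi)}$ (a standard consequence of the beta function $B(a,1-a)=\Gamma(a)\Gamma(1-a)$ and Euler's reflection formula), which gives
\begin{equation*}
\int_{-\infty}^{+\infty}\frac{\mu^{2}(\xi)}{\xi^{2}+\eta+|\lambda|}\,d\xi
=\frac{\pi}{\sin(a\pi)}(\eta+|\lambda|)^{a-1},
\end{equation*}
as required. There is no real obstacle here: the only subtlety is checking that the set $E$ exactly captures the condition $\eta+|\lambda|>0$ (if $\mathrm{Im}\,\lambda\neq 0$ then $|\lambda|>0$; if $\mathrm{Re}\,\lambda+\eta>0$ and $\lambda=0$ then $\eta>0$), and that the ranges of $a$ and of $c>0$ make both the substitution and the beta-integral legitimate. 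Everything else is a bookkeeping computation of exponents.
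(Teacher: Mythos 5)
Your computation is correct: the reduction by evenness, the substitution $u=\xi^{2}/c$ with $c=\eta+|\lambda|$, and the appeal to $\int_{0}^{\infty}u^{a-1}(1+u)^{-1}\,du=\pi/\sin(a\pi)$ together with the convergence checks at $0$ and $\infty$ for $0<a<1$ give exactly the stated identity. The paper does not prove this lemma itself but quotes it from Mbodje's work, where the argument is this same beta-integral evaluation, so your proposal matches the intended proof.
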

\begin{lemma}\label{nuse3}
\textup{(\cite{akil2020})}
Let $0<a<1$, $\eta>0$, and $\lambda>0$, then
\begin{equation*}
\left\{
\begin{aligned}
&\int^{+\infty}_{-\infty}\frac{|\xi|^{a+\frac{1}{2}}}{(\lambda+\xi^2+\eta)^2}d\xi=c_1(\lambda+\eta)^{\frac{a}{2}-\frac{5}{4}},\\
&\left(\int^{+\infty}_{-\infty}\frac{1}{(\lambda+\xi^2+\eta)^2}d\xi\right)^{\frac{1}{2}}=\sqrt{\frac{\pi}{2}}\frac{1}{(\lambda+\eta)^{\frac{3}{4}}},\\
&\left(\int^{+\infty}_{-\infty}\frac{\xi^2}{(\lambda+\xi^2+\eta)^4}d\xi\right)^{\frac{1}{2}}=\frac{\sqrt{\pi}}{4}\frac{1}{(\lambda+\eta)^{\frac{5}{4}}}
\end{aligned}
\right.
\end{equation*}
where $c_1$ is a positive constant number independent of $\lambda$.
\end{lemma}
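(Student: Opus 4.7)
The plan is to reduce all three integrals to $(\lambda+\eta)$-independent ones via the single rescaling $\xi=\sqrt{\lambda+\eta}\,\tau$. Since the denominators depend on $\xi$ only through $\xi^{2}+\lambda+\eta$, this substitution converts $(\lambda+\eta+\xi^{2})^{-k}$ into $(\lambda+\eta)^{-k}(1+\tau^{2})^{-k}$ and produces an extra $(\lambda+\eta)^{1/2}$ from $d\xi$; any additional power of $\xi$ or $|\xi|$ in the numerator contributes its own power of $\lambda+\eta$. Collecting these factors, the $(\lambda+\eta)$-dependence is pulled entirely outside and the correct exponent on the right-hand side drops out by bookkeeping.

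After this reduction, (ii) becomes the classical integral $\int_{-\infty}^{+\infty}(1+\tau^{2})^{-2}\,d\tau=\pi/2$, which I would handle either by the trigonometric substitution $\tau=\tan\theta$ or by a contour/residue argument; taking the square root then recovers $\sqrt{\pi/2}\,(\lambda+\eta)^{-3/4}$. Identity (iii) reduces to $\int_{-\infty}^{+\infty}\tau^{2}(1+\tau^{2})^{-4}\,d\tau$, which I would compute by setting $u=\tau^{2}$ and applying the Beta identity
\begin{align*}
\int_{0}^{\infty}\frac{u^{p-1}}{(1+u)^{p+q}}\,du=B(p,q)=\frac{\Gamma(p)\Gamma(q)}{\Gamma(p+q)}
\end{align*}
with $p=3/2$, $q=5/2$; together with $\Gamma(3/2)=\sqrt{\pi}/2$, $\Gamma(5/2)=3\sqrt{\pi}/4$, $\Gamma(4)=6$, this yields $\pi/16$, whose square root is the stated $\sqrt{\pi}/4$.

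For (i), the same rescaling pulls out $(\lambda+\eta)^{a/2-5/4}$ and leaves the $\lambda$-free constant
\begin{align*}
c_{1}=\int_{-\infty}^{+\infty}\frac{|\tau|^{a+1/2}}{(1+\tau^{2})^{2}}\,d\tau.
\end{align*}
The only point needing a quick check is convergence: near $\tau=0$ the integrand behaves like $|\tau|^{a+1/2}$ (integrable because $a>0$) and as $|\tau|\to\infty$ it decays like $|\tau|^{a-7/2}$ (integrable because $a<1$), so $c_{1}$ is a finite positive constant depending only on $a$.

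The whole lemma is essentially a change-of-variables exercise, so I do not anticipate any real obstacle. The only step requiring any care is the exponent arithmetic in (i): combining $(\lambda+\eta)^{(a+1/2)/2}$ from $|\xi|^{a+1/2}$, the factor $(\lambda+\eta)^{1/2}$ from $d\xi$, and $(\lambda+\eta)^{-2}$ from the squared denominator, one must verify that the total exponent equals $a/2-5/4$, and a parallel bookkeeping check must be done in (iii) to be sure the final square root matches $(\lambda+\eta)^{-5/4}$ with the prefactor $\sqrt{\pi}/4$.
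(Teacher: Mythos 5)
Your proof is correct: the rescaling $\xi=\sqrt{\lambda+\eta}\,\tau$ does pull out exactly the exponents $a/2-5/4$, $-3/2$, and $-5/2$ respectively, and your evaluations $\int_{-\infty}^{+\infty}(1+\tau^{2})^{-2}\,d\tau=\pi/2$ and $\int_{-\infty}^{+\infty}\tau^{2}(1+\tau^{2})^{-4}\,d\tau=B(3/2,5/2)=\pi/16$ are right, as is the convergence check for $c_{1}$. The paper itself offers no proof to compare against --- the lemma is imported verbatim from the cited reference of Akil, Ghader and Wehbe --- and your change-of-variables argument is the standard one used there, so there is nothing further to reconcile.
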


Using equalities \eqref{2.1} and \eqref{2.101}, system \eqref{problem1.1}-\eqref{1.3} can be rewritten as the augmented model
\begin{align}
&\rho V_{tt}-\alpha V_{xx}+\gamma\beta P_{xx}=0,&&(x,t)\in  (0,L)\times (0,T), \label{2.3}\\
&\partial_{t}\phi_{1}(\xi,t)+\left(\xi^{2}+\eta\right)\phi_{1}(\xi,t)-V_{t}(L,t)\mu(\xi)=0,&&(\xi,t)\in(-\infty,+\infty)\times(0,+\infty),\label{2.301}\\
&\mu P_{tt}-\beta P_{xx}+\gamma\beta V_{xx}=0,  && (x,t)\in(0,L)\times (0,T),\label{2.4}\\
&\partial_{t}\phi_{2}(\xi,t)+\left(\xi^{2}+\eta\right)\phi_{2}(\xi,t)-P_{t}(L,t)\mu(\xi)=0,&&(\xi,t)\in(-\infty,+\infty)\times(0,+\infty),\label{2.401}
\end{align}
with the boundary conditions
\begin{align}
&V(0,t)=P(0,t)=0,& t\in (0,T),\\
&\alpha V_{x}(L,t)-\gamma\beta P_{x}(L,t)=-l_{1}\frac{\sin(a\pi)}{\pi}\int^{\infty}_{-\infty}\mu(\xi)\phi_{1}(\xi,t)d\xi,& t\in (0,T),\label{2.501}\\
&\beta P_{x}(L,t)-\gamma\beta V_{x}(L,t)=-l_{2}\frac{\sin(a\pi)}{\pi}\int^{\infty}_{-\infty}\mu(\xi)\phi_{2}(\xi,t)d\xi,& t\in (0,T)\label{2.502}
\end{align}
and the initial conditions
\begin{align}\label{2.7}
\left(V(x,0), V_{t}(x,0), \phi_{1}(0), P(x,0), P_{t}(x,0),\phi_{2}(0)\right)=\left(V_{0},V_{1},\phi_{01},P_{0},P_{1},\phi_{02}\right)\quad x \in (0,L).
\end{align}

The energy of system \eqref{2.3}-\eqref{2.7} is given by
\begin{align}\label{2.8}
E(t)=\frac{1}{2}\int^{L}_{0}\left[\rho\left|V_{t}\right|^{2}+\alpha_{1}\left|V_{x}\right|^{2}+\mu\left|P_{t}\right|^{2}+\beta\left|\gamma V_{x}-P_{x}\right|^{2}\right] dx+\frac{\sin(a\pi)}{2\pi}\int^{\infty}_{-\infty}\left(l_{1}|\phi_{1}|^{2}+l_{2}|\phi_{2}|^{2}\right)d\xi.
\end{align}
Multiplying \eqref{2.3}, \eqref{2.4} by $ V_{t}$, $P_{t}$ respectively, integrating on $(0,L)$, and multiplying \eqref{2.301}, \eqref{2.401} by $l_{1}\frac{\sin(a\pi)}{\pi}\phi_{1}$, $l_{2}\frac{\sin(a\pi)}{\pi}\phi_{2}$ respectively, integrating on $\mathbb{R}$, we can get
\begin{align}\label{2.9}
\frac{d}{dt}E(t)=-\frac{\sin(a\pi)}{\pi}\int^{\infty}_{-\infty}(\xi^{2}+\eta)\left(l_{1}|\phi_{1}|^{2}+l_{2}|\phi_{2}|^{2}\right)d\xi.
\end{align}

Let the space $\mathcal{H}$ be
\begin{align*}
\mathcal{H}:=H^{1}_{*}(0,L)\times {L}^{2}(0,L)\times{L}^{2}(-\infty,+\infty)\times {{H}}^{1}_{*}(0,L)\times {{L}}^{2}(0,L)\times{L}^{2}(-\infty,+\infty),
\end{align*}
where $ {{H}}^{1}_{*}(0,L)=\left\{f\in{{H}}^{1}(0,L):f(0)=0\right\}$. And the inner product in the space $\mathcal{H}$ is defined by
\begin{align*}
\langle{U}_{1},{U}_{2}\rangle_{\mathcal{H}}=&\int^{L}_{0}\left[\rho f_{1}\overline{f}_{2}+\mu g_{1}\overline{g}_{2}+\alpha_{1}V_{1,x}\overline{V}_{2,x}+\beta(\gamma V_{1,x}-P_{1,x})\overline{(\gamma V_{2,x}-P_{2,x})}\right]dx \\
&+\frac{\sin(a\pi)}{\pi}\int^{+\infty}_{-\infty}\left(l_{1}\phi_{1,1}\overline{\phi}_{1,2}+l_{2}\phi_{2,1}\overline{\phi}_{2,2}\right)dx,
\end{align*}
where ${{U}}_{i}=\left(V_{i},f_{i}, \phi_{1,i}, P_{i}, g_{i},\phi_{2,i}\right)\in {H},i=1,2$.
Let the vector function ${{U}}=\left(V,f, \phi_{1}, P, g, \phi_{2}\right)^{T}$,
then system \eqref{2.3}-\eqref{2.7} can be equivalent to an abstract development equation, which is \begin{equation} \label{2.10}
\left\{
\begin{aligned}
&{{U}}_{t}={\mathcal{A}}{{U}}\\
&{{U}}(0)={{U}}_{0}
\end{aligned}
\right.
\end{equation}
where ${{U}}_{0}=\left(V_{0},V_{1}, \phi_{1,0}, P_{0}, P_{1}, \phi_{2,0}\right)^{T}$ and the operator $\mathcal{A}:\mathcal{D}(\mathcal{A})\subset \mathcal{H}\rightarrow \mathcal{H}$ is showed  by
\begin{align*}
\mathcal{A}\begin{bmatrix} V \\ f \\ \phi_{1} \\ P \\ g\\ \phi_{2}\end{bmatrix}=
\begin{bmatrix}
   f \\
  \frac{\alpha}{\rho}V_{xx} -\frac{\gamma\beta}{\rho}P_{xx} \\
  -\left(\xi^{2}+\eta\right)\phi_{1}(\xi,t)+f(L,t)\mu(\xi)\\
  g \\
  -\frac{\gamma\beta}{\mu}V_{xx} + \frac{\beta}{\mu}P_{xx} \\
  -\left(\xi^{2}+\eta\right)\phi_{2}(\xi,t)+g(L,t)\mu(\xi)
\end{bmatrix}
,
\end{align*}
where the domain of the operator $\mathcal{A}$ is given by
\begin{align*}
\mathcal{D}(\mathcal{A}):=\bigg\{&{{U}}\in \mathcal{H};  V,P \in {H}^{2}(0,L)\cap H^{1}_{*}(0,L), f,g\in H^{1}_{*}(0,L),|\xi|\phi_{1},|\xi|\phi_{2}\in L^{2}(-\infty,+\infty),\\
&-\left(\xi^{2}+\eta\right)\phi_{1}(\xi,t)+f(L,t)\mu(\xi),-\left(\xi^{2}+\eta\right)\phi_{2}(\xi,t)+g(L,t)\mu(\xi)\in L^{2}(-\infty,+\infty)\bigg\}.
\end{align*}
with ${{U}}=\left(V,f, \phi_{1}, P, g, \phi_{2}\right)$.

For showing that the operator $\mathcal{A}$ generates a ${C}_{0}-$semigroup $\{\mathcal{S}_{\mathcal{A}}(t)\}_{t\geq0}$ of contractions on the space $\mathcal{H}$, we first give a lemma to show $\mathcal{A}$ is dissipative.

\begin{lemma}\label{lem2.1}
The operator $\mathcal{A}$ is dissipative and satisfies that for any ${U}\in \mathcal{D(\mathcal{A})}$,
\begin{align}\label{2.11}
{Re}\langle  {\mathcal{A}}{U},{U}\rangle_{{H}}=-\frac{\sin(a\pi)}{\pi}\int^{\infty}_{-\infty}(\xi^{2}+\eta)\left(l_{1}|\phi_{1}|^{2}+l_{2}|\phi_{2}|^{2}\right)d\xi\leq 0.
\end{align}
\end{lemma}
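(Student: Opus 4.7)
The plan is to compute $\langle \mathcal{A}U,U\rangle_{\mathcal{H}}$ directly from the definition of the inner product on $\mathcal{H}$ and simplify, term by term, using integration by parts and the boundary conditions encoded in $\mathcal{D}(\mathcal{A})$. Writing $\mathcal{A}U=(f,\frac{\alpha}{\rho}V_{xx}-\frac{\gamma\beta}{\rho}P_{xx},-(\xi^2+\eta)\phi_1+f(L)\mu(\xi),g,-\frac{\gamma\beta}{\mu}V_{xx}+\frac{\beta}{\mu}P_{xx},-(\xi^2+\eta)\phi_2+g(L)\mu(\xi))$, the inner product splits into a spatial part on $(0,L)$ and a spectral part on $\mathbb{R}$; I will treat them separately.

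For the spatial part, I would integrate the term $\int_0^L(\alpha V_{xx}-\gamma\beta P_{xx})\overline{f}\,dx$ by parts, picking up a boundary term at $x=L$ and using $f(0)=0$, and similarly for the $P$-equation. The interior integrals combine with the stiffness terms $\alpha_1 V_{1,x}\overline{V}_{2,x}+\beta(\gamma V_x-P_x)\overline{(\gamma V_{2,x}-P_{2,x})}$ in the inner product and, after expanding $\alpha=\alpha_1+\gamma^2\beta$, should cancel purely imaginarily (their sum with the complex conjugate contributed by the $\langle\mathcal{A}U,U\rangle$ structure will give zero real part). The surviving real-valued contribution should come only from the boundary traces $(\alpha V_x(L)-\gamma\beta P_x(L))\overline{f(L)}$ and $(\beta P_x(L)-\gamma\beta V_x(L))\overline{g(L)}$, which by \eqref{2.501}--\eqref{2.502} equal $-l_1\frac{\sin(a\pi)}{\pi}\int_{-\infty}^{+\infty}\mu(\xi)\phi_1\,d\xi\cdot\overline{f(L)}$ and the analogous expression in $\phi_2,g(L)$.

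For the spectral part, I would plug the third and sixth components of $\mathcal{A}U$ into $\frac{\sin(a\pi)}{\pi}\int_{-\infty}^{+\infty}(l_1(\cdot)\overline{\phi_1}+l_2(\cdot)\overline{\phi_2})d\xi$. This yields two pieces: a negative-definite term $-\frac{\sin(a\pi)}{\pi}\int_{-\infty}^{+\infty}(\xi^2+\eta)(l_1|\phi_1|^2+l_2|\phi_2|^2)d\xi$, which will be exactly the dissipation claimed in \eqref{2.11}, together with boundary-coupling terms $f(L)\frac{\sin(a\pi)}{\pi}\int\mu(\xi)l_1\overline{\phi_1}d\xi$ and $g(L)\frac{\sin(a\pi)}{\pi}\int\mu(\xi)l_2\overline{\phi_2}d\xi$. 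Taking the real part and adding to the spatial contribution, the boundary-coupling terms cancel against the boundary traces from the integration by parts (the two are complex conjugates of each other up to sign), leaving only the dissipative spectral integral.

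The main obstacle, and the only step requiring care, is the bookkeeping of the boundary and spectral coupling terms to verify this cancellation: one must track conjugates carefully, use $\alpha=\alpha_1+\gamma^2\beta$ to split $\alpha V_{xx}-\gamma\beta P_{xx}=\alpha_1 V_{xx}+\gamma\beta(\gamma V_{xx}-P_{xx})$ so that integration by parts produces exactly the quadratic forms appearing in $\langle\cdot,\cdot\rangle_{\mathcal{H}}$, and then observe that after taking $\mathrm{Re}$ the cross terms vanish. Non-positivity of the surviving expression is then immediate because $\sin(a\pi)>0$ for $a\in(0,1)$, $\eta>0$, and $l_1,l_2>0$. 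The integrability of $(\xi^2+\eta)|\phi_i|^2$ is ensured by the definition of $\mathcal{D}(\mathcal{A})$, which requires $|\xi|\phi_i\in L^2(\mathbb{R})$ together with $\phi_i\in L^2(\mathbb{R})$, so all integrals involved are well-defined.
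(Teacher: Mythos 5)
Your proposal is correct and follows the same route the paper intends: the paper's own proof is a one-line remark that the identity follows ``with the help of the inner product in $\mathcal{H}$,'' i.e.\ exactly the direct computation you carry out. Your bookkeeping is right --- after splitting $\alpha V_{xx}-\gamma\beta P_{xx}=\alpha_1V_{xx}+\gamma\beta(\gamma V_{xx}-P_{xx})$, the interior cross terms and the boundary/spectral coupling terms are each of the form $z-\overline{z}$ and drop out upon taking real parts, leaving only the nonpositive spectral integral.
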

\begin{proof}
For any ${U}\in \mathcal{D(\mathcal{A})}$, with the help of the inner product in $\mathcal{H}$, we obtain that the lemma is correct.
\end{proof}
\begin{theorem}\label{them2.2}
Let ${U}_{0}\in \mathcal{D(\mathcal{A})}$, there exists a unique solution ${U}(t)=\mathcal{S}_{\mathcal{A}}(t){U}_{0}$ of \eqref{2.10} such that
\begin{align*}
{U}\in{C}\left([0,\infty);\mathcal{D}(\mathcal{A})\right)\cap{C}^{1} \left([0,\infty); \mathcal{H}\right).
\end{align*}
\end{theorem}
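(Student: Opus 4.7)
\noindent\textbf{Proof plan for Theorem \ref{them2.2}.} The strategy is to invoke the Lumer--Phillips theorem: since Lemma \ref{lem2.1} already yields dissipativity of $\mathcal{A}$ on $\mathcal{H}$, what remains is to verify that $\mathcal{D}(\mathcal{A})$ is dense in $\mathcal{H}$ and that the range condition $R(I-\mathcal{A})=\mathcal{H}$ holds. Density is routine: smooth compactly supported functions in each component (with $\phi_1,\phi_2$ chosen to be compactly supported away from the singular weight $\mu$) already sit in $\mathcal{D}(\mathcal{A})$ and form a dense subset. The bulk of the argument will therefore be in establishing surjectivity of $I-\mathcal{A}$; once that is obtained, Lumer--Phillips gives the $C_0$-semigroup of contractions $\mathcal{S}_{\mathcal{A}}(t)$ and hence the stated solution with the announced regularity.

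To handle the range condition, I would fix $F=(F_1,F_2,F_3,F_4,F_5,F_6)\in\mathcal{H}$ and analyze $(I-\mathcal{A})U=F$. The first and fourth components give $f=V-F_1$ and $g=P-F_4$, while the third and sixth components can be solved algebraically in $\xi$:
\begin{align*}
\phi_1(\xi)=\frac{F_3(\xi)+f(L)\mu(\xi)}{1+\xi^2+\eta},\qquad \phi_2(\xi)=\frac{F_6(\xi)+g(L)\mu(\xi)}{1+\xi^2+\eta}.
\end{align*}
Substituting $f,g$ into the second and fifth components reduces the problem to a coupled elliptic boundary value problem for $(V,P)\in H^1_*(0,L)\times H^1_*(0,L)$ of the form $\rho V-\alpha V_{xx}+\gamma\beta P_{xx}=\rho(F_1+F_2)$, $\mu P+\gamma\beta V_{xx}-\beta P_{xx}=\mu(F_4+F_5)$, supplemented at $x=L$ by the fractional boundary conditions \eqref{2.501}--\eqref{2.502}, into which we insert the explicit formulas for $\phi_1,\phi_2$. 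Using Lemma \ref{nuse1} with $\lambda=1$, the resulting boundary terms take the form $\kappa_i V(L)$ (resp.\ $\kappa_i P(L)$) plus known data involving $F_1(L),F_3,F_4(L),F_6$, where $\kappa_i>0$.

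I would then recast this elliptic problem variationally on $H^1_*(0,L)\times H^1_*(0,L)$. The natural bilinear form
\begin{align*}
\mathfrak a((V,P),(\tilde V,\tilde P))=\int_0^L\!\!\bigl[\rho V\tilde V+\mu P\tilde P+\alpha_1 V_x\tilde V_x+\beta(\gamma V_x-P_x)(\gamma\tilde V_x-\tilde P_x)\bigr]\,dx+\kappa_1 V(L)\tilde V(L)+\kappa_2 P(L)\tilde P(L)
\end{align*}
is continuous and coercive on $H^1_*(0,L)\times H^1_*(0,L)$ thanks to Poincar\'e's inequality and the decomposition $\alpha=\alpha_1+\gamma^2\beta$, and the right-hand side defines a bounded antilinear functional. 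Lax--Milgram then yields a unique weak solution $(V,P)$. Standard elliptic regularity on $(0,L)$ upgrades this to $V,P\in H^2(0,L)$, so the boundary conditions hold in the classical sense.

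The remaining technical point, and the one I expect to be the main obstacle, is verifying that the reconstructed $\phi_1,\phi_2$ actually lie in the domain $\mathcal{D}(\mathcal{A})$, namely that $|\xi|\phi_i\in L^2(\mathbb R)$ and that $-(\xi^2+\eta)\phi_i+f(L)\mu(\xi)\in L^2(\mathbb R)$. The second requirement is immediate from the equation itself, which gives $-(\xi^2+\eta)\phi_i+f(L)\mu(\xi)=\phi_i-F_3\in L^2$ (resp.\ $\phi_i-F_6$). For the first, using $\mu(\xi)=|\xi|^{(2a-1)/2}$ one estimates
\begin{align*}
\int_{-\infty}^{+\infty}\frac{\xi^2\mu^2(\xi)}{(1+\xi^2+\eta)^2}\,d\xi=\int_{-\infty}^{+\infty}\frac{|\xi|^{2a+1}}{(1+\xi^2+\eta)^2}\,d\xi<\infty
\end{align*}
precisely because $0<a<1$, with the $F_3,F_6$ contributions handled by the boundedness of $|\xi|/(1+\xi^2+\eta)$. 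This confirms $U\in\mathcal{D}(\mathcal{A})$ and closes the surjectivity argument, so the hypotheses of Lumer--Phillips are met and the theorem follows.
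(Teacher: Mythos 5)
Your proposal is correct and follows essentially the same route as the paper: Lumer--Phillips combined with the dissipativity from Lemma \ref{lem2.1}, algebraic elimination of $f,g,\phi_1,\phi_2$, and a Lax--Milgram argument for the reduced $(V,P)$ elliptic system. The only (harmless) difference is that you verify the range condition for $I-\mathcal{A}$ directly, whereas the paper shows $0\in\rho(\mathcal{A})$; your version is the more standard form of the hypothesis, and you also supply the domain-membership checks for $\phi_1,\phi_2$ that the paper leaves implicit.
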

\begin{proof}
It relies on the Lumer-Phillips theorem, which ensures that the operator $\mathcal{A}$ is the infinitesimal generator of a $C_{0}-$semigroup $\{\mathcal{S}_{\mathcal{A}}(t)\}_{t\geq0}$. From \eqref{2.11}, we have shown that the operator $\mathcal{A}$ is dissipative, closed and dense.
So we need to prove that $0\in \rho(\mathcal{A})$. For any ${F}=\left(F_{1},F_{2},F_{3},F_{4},F_{5},F_{6}\right)\in \mathcal{H}$, we will prove there exists a ${{U}}=\left(V,f, \phi_{1}, P, g, \phi_{2}\right)$ such that
\begin{align*}
-\mathcal{A}{U}={F}.
\end{align*}
Equivalently, we need to consider the existence of unique solution of the system
\begin{align}
-f&=F_{1},\label{2.121}\\
 -{\alpha}V_{xx}+{\gamma\beta}P_{xx}&={\rho}F_{2},\label{2.122}\\
\left(\xi^{2}+\eta\right)\phi_{1}(\xi,t)-f(L)\mu(\xi)&=F_{3},\label{2.123}\\
-g&=F_{4},\label{2.124}\\
-{\beta}P_{xx}+{\gamma\beta}V_{xx}&={\mu}F_{5},\label{2.125}\\
\left(\xi^{2}+\eta\right)\phi_{2}(\xi,t)-g(L)\mu(\xi)&=F_{6}.\label{2.126}
\end{align}
Thanks to \eqref{2.121} and \eqref{2.123}, it follows that $f,g\in H^{1}_{*}(0,L)$ and
\begin{align}\label{2.131}
f=-F_{1},\quad & \quad g=-F_{4}.
\end{align}
Then combining \eqref{2.123} with \eqref{2.126} and \eqref{2.131}, we have that
\begin{align}\label{2.132}
\phi_{1}=\frac{-F_{1}(L)\mu(\xi)+F_{3}}{\xi^{2}+\eta},\quad\quad\phi_{2}=\frac{-F_{4}(L)\mu(\xi)+F_{6}}{\xi^{2}+\eta}.
\end{align}
From Lemma \ref{nuse1}, we conclude that $\phi_{i}\in L^{2}(R)$. Then, we need to prove the existence and uniqueness of solution for system
\begin{equation}\label{2.141}
\left\{
\begin{aligned}
& {\alpha}V_{xx}-{\gamma\beta}P_{xx}=-\rho F_{2} ,\\
&\beta P_{xx}-\gamma\beta V_{xx}=-\mu F_{5} ,\\
&\alpha V_{x}(L,t)-\gamma\beta P_{x}(L,t)=-l_{1}\frac{\sin(a\pi)}{\pi}\int^{\infty}_{-\infty}\mu(\xi)\frac{-F_{1}(L)\mu(\xi)+F_{3}(\xi)}{\xi^{2}+\eta}d\xi,\\
&\beta P_{x}(L,t)-\gamma\beta V_{x}(L,t)=-l_{2}\frac{\sin(a\pi)}{\pi}\int^{\infty}_{-\infty}\mu(\xi)\frac{-F_{4}(L)\mu(\xi)+F_{6}(\xi)}{\xi^{2}+\eta}d\xi.
\end{aligned}
\right.
\end{equation}
By using the Lax-Milgram theorem, we know that there exists a solution $(V,P)\in {H}^{1}_{*}(0,L)\times {H}^{1}_{*}(0,L)$. Combining \eqref{2.131}, \eqref{2.132} and \eqref{2.141}, we obtain that $0\in \rho(\mathcal{A})$. Thus, the proof of the theorem is completed.
\end{proof}

\section{Lack of exponential stability for piezoelectric beams without thermal effects}
In this section, we use the classical method  developed by  Maryati et al.\cite{bng2000,tita2018} to show that the piezoelectric beam system \eqref{2.3}-\eqref{2.7} is not exponentially stable. To do that, we need the following theorem:
\begin{theorem}\label{thm3.1}
\textup{(\cite{tita2018})}
Let $\mathcal{H}$ be a Hilbert space and $\mathcal{H}_{0}$ be a closed subspace of $\mathcal{H}$. Assume that $\{\mathcal{S}(t)\}_{t\geq0}$ is a contraction semigroup defined on $\mathcal{H}_{0}$ and $\{\mathcal{S}_{0}(t)\}_{t\geq0}$ is a unitary group over $\mathcal{H}_{0}$. If the difference $\{\mathcal{S}(t)-\mathcal{S}_{0}(t)\}_{t\geq0}$ is a compact operator from $\mathcal{H}_{0}$ to $\mathcal{H}$, the semigroup $\{\mathcal{S}(t)\}_{t\geq0}$ is not exponentially stable.
\end{theorem}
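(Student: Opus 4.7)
The plan is to argue by contradiction using the fact that a compact operator on an infinite-dimensional Hilbert space cannot be bounded below by a positive multiple of the norm. Suppose, toward contradiction, that $\{\mathcal{S}(t)\}_{t\geq 0}$ is exponentially stable on $\mathcal{H}_0$, so that there exist constants $M\geq 1$ and $\omega>0$ with $\|\mathcal{S}(t)\|\leq Me^{-\omega t}$ for every $t\geq 0$. I would pick $T>0$ large enough that $q:=Me^{-\omega T}<1$, and set $K:=\mathcal{S}_0(T)-\mathcal{S}(T)$. Since both families leave $\mathcal{H}_0$ invariant and $\mathcal{H}_0$ is closed in $\mathcal{H}$, the hypothesis that $K$ is compact from $\mathcal{H}_0$ into $\mathcal{H}$ implies that $K$ is also compact as an operator on $\mathcal{H}_0$.

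Next I would extract a lower bound on $K$. For any $x\in \mathcal{H}_0$, using that $\mathcal{S}_0(T)$ is an isometry together with the triangle inequality,
\begin{equation*}
\|x\|=\|\mathcal{S}_0(T)x\|=\|Kx+\mathcal{S}(T)x\|\leq \|Kx\|+q\|x\|,
\end{equation*}
so that $\|Kx\|\geq (1-q)\|x\|$ with $1-q>0$. In particular $K$ is injective and bounded below on $\mathcal{H}_0$, so its range is closed.

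To conclude, I would invoke the Riesz-compactness dichotomy. Any bounded sequence $y_n=Kx_n$ in $\mathrm{range}(K)$ pulls back via the lower bound to a bounded sequence $(x_n)\subset\mathcal{H}_0$, which is then sent by the compact operator $K$ to a sequence admitting a convergent subsequence. Hence the closed unit ball of $\mathrm{range}(K)$ is relatively compact, and Riesz's theorem forces $\mathrm{range}(K)$ to be finite dimensional; combined with the injectivity of $K$, this yields $\dim\mathcal{H}_0<\infty$, contradicting the tacit infinite-dimensionality of $\mathcal{H}_0$ in the intended application (the conservative component of the state space). The delicate point is the quantitative lower bound squeezed out of the unitarity of $\mathcal{S}_0(T)$ against the smallness of $\|\mathcal{S}(T)\|$; once this lower bound is in hand, the compactness of $K$ cannot coexist with $\mathcal{H}_0$ being infinite dimensional, and the rest of the argument is routine.
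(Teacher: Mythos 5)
The paper does not actually prove this statement; it is quoted verbatim from \cite{tita2018} and used as a black box, so there is no in-paper proof to compare against. Your argument is correct and self-contained: the lower bound $\|Kx\|\geq(1-q)\|x\|$ obtained by playing the isometry of $\mathcal{S}_0(T)$ against the strict contractivity of $\mathcal{S}(T)$ for large $T$, followed by the Riesz argument showing that a compact operator that is bounded below forces its domain to be finite dimensional, is a clean and rather more elementary route than the one usually taken in the literature (which observes that $\mathcal{S}_0(T)=K+\mathcal{S}(T)$ would have essential spectral radius $<1$, contradicting the fact that a unitary operator on an infinite-dimensional space has essential spectrum on the unit circle). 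Two small remarks. First, your parenthetical claim that both families leave $\mathcal{H}_0$ invariant is false in the intended application ($\mathcal{S}(t)$ applied to data with vanishing $\phi_i$-components immediately produces nonzero ones), so $K$ should be kept as a compact operator from $\mathcal{H}_0$ into $\mathcal{H}$; fortunately your subsequent argument only uses it in that form, since the range of $K$ need not lie in $\mathcal{H}_0$ and the Riesz step works for any closed subspace of $\mathcal{H}$ isomorphic to $\mathcal{H}_0$ via $K$. Second, you are right that the infinite-dimensionality of $\mathcal{H}_0$ is a tacit but indispensable hypothesis (the statement is false when $\dim\mathcal{H}_0<\infty$), and it does hold for the conservative subspace used in Section 3.
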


In order to achieve the proof of lack of exponential stability, we introduce some functionals
\begin{align*}
\mathcal{J}(x,t)&=\frac{1}{2}\big({\rho}\left|V_{t}\right|^{2}+{\alpha_{1}}\left|V_{x}\right|^{2}+{\mu}\left|P_{t}\right|^{2}+{\beta}\left|\gamma V_{x}-P_{x}\right|^{2}\big),\\
J(t)&=\int^{L}_{0}\big(\rho q V_{t}V_{x}+\mu q P_{t}(P_{x}-\gamma V_{x})+\mu\gamma q P_{t} V_{x}\big)dx,
\end{align*}
where $q$ is an appropriate function of $x$, and $q\in C^1[0,L]$.

Moreover, we will give the inequality relation between these functionals and energy functional $E(t)$, which is a very useful lemma for us to prove the lack of exponential stability.
\begin{lemma}\label{lem3.2}
There exists a positive constant $C$ such that
\begin{align}
\left|\int^{t}_{0}\left[q(L)\mathcal{J}(L,\tau)-q(0)\mathcal{J}(0,\tau)\right]d\tau - \int^{t}_{0} \int^{L}_{0}q'(x)\mathcal{J}(x,\tau)dxd\tau\right|=\left|J(0)-J(t)\right|\leq C E(0).
\end{align}
\end{lemma}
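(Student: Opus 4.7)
The plan is to recognize that the second identity in the lemma comes from a standard multiplier identity, and then the bound follows from the dissipativity of the energy. First, I would simplify the definition of $J(t)$: the two terms $\mu q P_t(P_x - \gamma V_x)$ and $\mu\gamma q P_t V_x$ cancel against each other except for $\mu q P_t P_x$, so effectively
\begin{align*}
J(t)=\int_0^L q(x)\bigl(\rho V_t V_x+\mu P_t P_x\bigr)\,dx.
\end{align*}
This simplification will be the backbone of the calculation.

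Next, I would differentiate $J(t)$ in time under the integral, use the PDEs \eqref{2.3} and \eqref{2.4} to substitute $\rho V_{tt}=\alpha V_{xx}-\gamma\beta P_{xx}$ and $\mu P_{tt}=\beta P_{xx}-\gamma\beta V_{xx}$, and also use $V_t V_{xt}=\tfrac{1}{2}(V_t^2)_x$ and $P_t P_{xt}=\tfrac{1}{2}(P_t^2)_x$. The key algebraic step, which I expect to be the one requiring some care, is to use the relation $\alpha=\alpha_1+\gamma^2\beta$ to rewrite
\begin{align*}
(\alpha V_{xx}-\gamma\beta P_{xx})V_x+(\beta P_{xx}-\gamma\beta V_{xx})P_x
=\alpha_1 V_{xx}V_x+\beta(\gamma V_{xx}-P_{xx})(\gamma V_x-P_x),
\end{align*}
which in turn equals $\tfrac{1}{2}\bigl(\alpha_1(V_x^2)_x+\beta((\gamma V_x-P_x)^2)_x\bigr)$. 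Combined with the time-derivative terms this produces exactly $q(x)\,\partial_x \mathcal J(x,t)$.

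Integration by parts in $x$ then gives
\begin{align*}
J'(t)=q(L)\mathcal J(L,t)-q(0)\mathcal J(0,t)-\int_0^L q'(x)\mathcal J(x,t)\,dx,
\end{align*}
and integrating in $\tau$ from $0$ to $t$ produces the equality of the two expressions in the statement of the lemma. The only genuine obstacle here is the algebraic regrouping above; once one sees that the coupling terms collapse under $\alpha=\alpha_1+\gamma^2\beta$, the rest is bookkeeping.

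For the inequality $|J(0)-J(t)|\le CE(0)$, I would estimate $|J(s)|$ for any $s\ge 0$. Applying Cauchy--Schwarz to the simplified form of $J$ and writing $P_x=\gamma V_x-(\gamma V_x-P_x)$ to control $\|P_x\|_{L^2}$ by the two quadratic forms that already appear inside $E(s)$, I obtain $|J(s)|\le C\|q\|_{\infty}E(s)$. Since the energy identity \eqref{2.9} guarantees $E(s)\le E(0)$ for all $s\ge 0$, the triangle inequality gives $|J(0)-J(t)|\le |J(0)|+|J(t)|\le C E(0)$, which completes the proof.
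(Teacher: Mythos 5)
Your proposal is correct and follows essentially the same route as the paper: the same multiplier identity $\partial_t(\rho qV_tV_x+\mu qP_tP_x)=q\,\partial_x\mathcal J$ (obtained there by multiplying the two equations by $qV_x$ and $qP_x$ separately, and by you via the single regrouping under $\alpha=\alpha_1+\gamma^2\beta$ after noting that $J$ collapses to $\int_0^L q(\rho V_tV_x+\mu P_tP_x)\,dx$), followed by integration by parts in $x$, integration in $t$, and the bound $|J(0)-J(t)|\le|J(0)|+|J(t)|\le CE(0)$ using the dissipativity $E(t)\le E(0)$ from \eqref{2.9}. No gaps.
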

\begin{proof}
Multiplying \eqref{2.3} by $qV_{x}$, we have
\begin{align}\label{3.21}
\rho qV_{x}V_{tt}-\alpha qV_{x} V_{xx}+\gamma\beta qV_{x}P_{xx}=0.
\end{align}
And it is straightforward to verify that
\begin{align}\label{3.22}
\rho qV_{tt}V_{x}=\rho\frac{\partial}{\partial t}\left(qV_{t}V_{x}\right)-\rho q V_{t}V_{xt}=\rho\frac{\partial}{\partial t}\left(qV_{t}V_{x}\right)- q \frac{\partial}{\partial x}\left(\frac{\rho}{2}|V_{t}|^{2}\right).
 \end{align}
By plugging \eqref{3.22} into \eqref{3.21}, we have
\begin{align}\label{3.23}
\rho\frac{\partial}{\partial t}\left(qV_{t}V_{x}\right)- q \frac{\partial}{\partial x}\left(\frac{\rho}{2}|V_{t}|^{2}+\frac{\alpha_{1}}{2}|V_{x}|^{2}\right)=\gamma\beta q\left(\gamma V_{xx}-P_{xx}\right)V_{x}.
\end{align}
Multiplying \eqref{2.4} by $qP_{x}$, and using similar techniques, we have
\begin{align}\label{3.24}
\mu\frac{\partial}{ \partial t}\left(qP_{t}P_{x}\right)- q \frac{\partial}{\partial x}\left(\frac{\mu}{2}|P_{t}|^{2}\right)=-\beta q\left(\gamma V_{xx}-P_{xx}\right)P_{x}.
\end{align}
Adding \eqref{3.23} and \eqref{3.24}, we can obtain
\begin{align*}
\frac{\partial}{\partial t}\left(\rho q V_{t}V_{x}+\mu q P_{t}(P_{x}-\gamma V_{x})+\mu\gamma q P_{t} V_{x}\right)=q\frac{\partial}{ \partial x}\left(\frac{\rho}{2}\left|V_{t}\right|^{2}+\frac{\alpha_{1}}{2}\left|V_{x}\right|^{2}+\frac{\mu}{2}\left|P_{t}\right|^{2}+\frac{\beta}{2}\left|\gamma V_{x}-P_{x}\right|^{2}\right).
\end{align*}
Let us integrate the above equality from $0$ to $L$, then
\begin{align*}
\frac{d}{dt}J(t)-\int^{L}_{0}q\frac{\partial}{\partial x}(\mathcal{J}(x,t))dx=0.
\end{align*}
Using partial integral for the second term, we can represent the above equation as
\begin{align*}
\frac{d}{dt}J(t)-\left[q(x)\mathcal{J}\right]|^{L}_{0}+\int^{L}_{0}q'(x)\mathcal{J}(x,t)dx=0.
\end{align*}
Integrating the above equality from $0$ to $t$, we have
\begin{align*}
[J(\tau)]|^{t}_{0}-\int^{t}_{0} \left[q(x)\mathcal{J}\right]|^{L}_{0}d\tau+\int^{t}_{0}\int^{L}_{0}q'(x)\mathcal{J}(x,t)dxd\tau=0.
\end{align*}
That is,
\begin{align*}
\left|\int^{t}_{0}\left[q(L)\mathcal{J}(L,\tau)-q(0)\mathcal{J}(0,\tau)\right]d\tau -\int^{t}_{0} \int^{L}_{0}q'(x)\mathcal{J}(x,\tau)dxd\tau\right|=\left|J(0)-J(t)\right|.
\end{align*}
We find that the terms $V_{x},V_{t},P_{t},\gamma V_{x}-P_{x}$ in $J$ are some terms in energy $E(t)$. If we use Young's inequality for $J$, we can easily get that
\begin{align*}
\left|J(0)-J(t)\right|\leq \left|J(0)\right|+\left|J(t)\right|\leq C E(t)+C E(0),
\end{align*}
where $C$ is a constant that only depends on $\alpha,\beta,\gamma,\rho,L,\|q\|_{\infty}$. From \eqref{2.9}, we have that the energy $E(t)$ is dissipative. Therefore
\begin{align*}
E(t)\leq E(0).
\end{align*}
The proof is completed.
\end{proof}

Now we consider the piezoelectric beam system without dissipation
\begin{align}
&\rho \widetilde{V}_{tt}-\alpha \widetilde{V}_{xx}+\gamma\beta \widetilde{P}_{xx}=0, &(x,t)\in(0,L)\times (0,T),\label{pro3.1}\\
&\mu \widetilde{P}_{tt}-\beta \widetilde{P}_{xx}+\gamma\beta\widetilde{V}_{xx}=0,  &(x,t)\in(0,L)\times (0,T),\label{pro3.11}
\end{align}
with the boundary conditions
\begin{align}
&\widetilde{V}(0,t)=\widetilde{P}(0,t)=0,&t\in(0,T),\\
&\alpha \widetilde{V}_{x}(L,t)-\gamma\beta \widetilde{P}_{x}(L,t)=0,& t\in (0,T),\\
&\beta \widetilde{P}_{x}(L,t)-\gamma\beta \widetilde{V}_{x}(L,t)=0,& t\in (0,T),
\end{align}
with the same initial conditions as in \eqref{1.3}, i.e.,
\begin{align}\label{pro3.2}
\left(\widetilde{V}(x,0),\widetilde{V}_{t}(x,0),\widetilde{P}(x,0),\widetilde{P}_{t}(x,0)\right)=\left(V_{0}(x),V_{1}(x),P_{0}(x),P_{1}(x)\right),x\in(0,L).
\end{align}
Problem \eqref{pro3.1}-\eqref{pro3.2} is defined in space $\mathcal{H}_{0}$, which is a closed subspace of $\mathcal{H}$,
\begin{align*}
\mathcal{H}_{0}:=H^{1}_{*}(0,L)\times {L}^{2}(0,L)\times\{0\}\times {{H}}^{1}_{*}(0,L)\times {{L}}^{2}(0,L)\times\{0\}.
\end{align*}
The solution of system $\{\mathcal{S}_{0}(t)\}$ with $U_{0}=\left(V_{0}, V_{1}, 0, P_{0}, P_{1}, 0\right)\in\mathcal{H}_{0}$ is given by
\begin{align*}
{\mathcal{S}_{0}(t)U_{0}}=\left(\widetilde{V}, \widetilde{V}_{t}, 0, \widetilde{P}, \widetilde{P}_{t}, 0\right).
\end{align*}
Note that $\{\mathcal{S}_{0}(t)\}$ defines a unitary group in $\mathcal{H}_{0}$. That is,
\begin{align*}
\|{\mathcal{S}_{0}(t)U_{0}}\|^{2}=\|U_{0}\|^{2}.
\end{align*}
\begin{lemma}\label{lem3.5}
Let $\widetilde{E}(t)$ be the energy of system \eqref{pro3.1}-\eqref{pro3.2}, then
\begin{align*}
\int^{T}_{0}\left(\left|\widetilde{{V}}_{t}(L,t)\right|^{2}+\left|\widetilde{{P}}_{t}(L,t)\right|^{2}\right)dt\leq C_{T}\widetilde{E}(0),
\end{align*}
where $C$ is a constant.
\end{lemma}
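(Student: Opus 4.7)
The plan is to apply the multiplier identity established in Lemma \ref{lem3.2} to the conservative system \eqref{pro3.1}--\eqref{pro3.2} with the specific choice $q(x)=x$. Crucially, the derivation of that identity only uses the interior equations \eqref{2.3} and \eqref{2.4}, which are identical to \eqref{pro3.1}--\eqref{pro3.11}; the boundary conditions only enter when one later bounds $|J(0)-J(t)|$ in terms of the energy. Since the energy $\widetilde{E}(t)$ of the conservative system is \emph{conserved}, i.e. $\widetilde{E}(t)=\widetilde{E}(0)$ for all $t\ge 0$, the analogue of Lemma \ref{lem3.2} for $\widetilde{V},\widetilde{P}$ still yields $|\widetilde{J}(0)-\widetilde{J}(T)|\le C\widetilde{E}(0)$ by Young's inequality, because every term appearing in $\widetilde{J}$ is controlled by the integrand of $\widetilde{E}$.

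With $q(x)=x$ one has $q(0)=0$, $q(L)=L$, $q'\equiv 1$, so the identity becomes
\begin{align*}
L\int_0^T \widetilde{\mathcal{J}}(L,\tau)\,d\tau \;-\; \int_0^T \widetilde{E}(\tau)\,d\tau \;=\; \widetilde{J}(0)-\widetilde{J}(T).
\end{align*}
Next I would simplify $\widetilde{\mathcal{J}}(L,\tau)$ using the boundary conditions at $x=L$. The two homogeneous relations
\begin{align*}
\alpha\widetilde{V}_x(L,t)-\gamma\beta\widetilde{P}_x(L,t)=0, \qquad \beta\widetilde{P}_x(L,t)-\gamma\beta\widetilde{V}_x(L,t)=0
\end{align*}
form a linear system in $(\widetilde{V}_x(L,t),\widetilde{P}_x(L,t))$ whose determinant equals $\beta(\alpha-\gamma^2\beta)=\alpha_1\beta>0$. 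Hence $\widetilde{V}_x(L,t)=\widetilde{P}_x(L,t)=0$, and therefore
\begin{align*}
\widetilde{\mathcal{J}}(L,\tau)=\tfrac{\rho}{2}|\widetilde{V}_t(L,\tau)|^2+\tfrac{\mu}{2}|\widetilde{P}_t(L,\tau)|^2.
\end{align*}

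Combining the two displays above with $\int_0^T\widetilde{E}(\tau)\,d\tau=T\widetilde{E}(0)$ and $|\widetilde{J}(0)-\widetilde{J}(T)|\le C\widetilde{E}(0)$, I obtain
\begin{align*}
\frac{L\min(\rho,\mu)}{2}\int_0^T\bigl(|\widetilde{V}_t(L,\tau)|^2+|\widetilde{P}_t(L,\tau)|^2\bigr)d\tau \le (T+C)\,\widetilde{E}(0),
\end{align*}
which is the desired estimate with $C_T=\tfrac{2(T+C)}{L\min(\rho,\mu)}$. The only substantive step is the algebraic reduction $\widetilde{V}_x(L,t)=\widetilde{P}_x(L,t)=0$; everything else is a transcription of Lemma \ref{lem3.2} plus energy conservation. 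I expect the minor technical point to be verifying that the proof of Lemma \ref{lem3.2} really does not use the dissipative boundary data in any essential way — a quick inspection of the multiplier computations confirms this, since the boundary terms only appear after the integration-by-parts step and were absorbed into $[q(x)\mathcal{J}]\big|_0^L$ on the left-hand side, not used to cancel anything.
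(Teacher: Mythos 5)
Your proof is correct and takes essentially the same route as the paper's: apply the multiplier $q(x)=x$ exactly as in Lemma \ref{lem3.2} to the conservative system and combine the resulting identity with energy conservation $\widetilde{E}(t)=\widetilde{E}(0)$. The only addition is your explicit reduction $\widetilde{V}_x(L,t)=\widetilde{P}_x(L,t)=0$ via the determinant $\alpha_1\beta>0$ of the homogeneous boundary system --- a correct observation, though not strictly needed, since the corresponding terms in $\widetilde{\mathcal{J}}(L,\tau)$ are nonnegative and may simply be discarded when passing to the lower bound.
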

\begin{proof}
We multiply \eqref{pro3.1} and \eqref{pro3.11} by $x\widetilde{V}_{x}$ and $x\widetilde{P}_{x}$, respectively, and use a similar method in Lemma \ref{lem3.2} to get
\begin{align*}
\int^{T}_{0}\left(\left|\widetilde{{V}}_{t}(L,t)\right|^{2}+\left|\widetilde{{P}}_{t}(L,t)\right|^{2}\right)dt\leq\int^{T}_{0}\widetilde{E}(t)dt+C\widetilde{E}(T)+C\widetilde{E}(0),
\end{align*}
where $C$ is a constant, $\widetilde{E}(t)$ is the energy of system \eqref{pro3.1}-\eqref{pro3.2}. And combining with the fact that the energy of conservative system is always equal to the initial energy, we obtain
\begin{align}\label{3.82}
\int^{T}_{0}\left(\left|\widetilde{{V}}_{t}(L,t)\right|^{2}+\left|\widetilde{{P}}_{t}(L,t)\right|^{2}\right)dt\leq C_{T}\widetilde{E}(0),
\end{align}
where $C_{T}$ is a constant.
\end{proof}

Next, we give a lemma, which can help us to prove that the difference ${\mathcal{S}(t)}-{\mathcal{S}_{0}(t)}$ is a compact operator.
\begin{lemma}\label{lem3.4}
\textup{(\cite{mary2019})}
Let $R\in \mathbb{R}$, and the set $\mathcal{K}$ is defined by
\begin{align*}
\mathcal{K}=\left\{\partial^{a,\eta}_{t}f(t):f'\in L^{2}(0,T),\int^{T}_{0}|f'(t)|^{2}dt\leq R^{2}\right\}.
\end{align*}
Then, $\mathcal{K}$ is a compact subset in $L^{2}(0,T)$.
\end{lemma}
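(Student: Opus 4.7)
The plan is to identify $\mathcal{K}$ as the image of the closed $L^2$-ball $\overline{B}_R = \{g \in L^2(0,T) : \|g\|_{L^2(0,T)} \leq R\}$ under the Volterra convolution operator
\[
(Tg)(t) = \frac{1}{\Gamma(1-a)}\int_0^t e^{-\eta(t-s)}(t-s)^{-a} g(s)\,ds, \qquad t \in (0,T),
\]
since by the identity $\partial_t^{a,\eta} f = I^{1-a,\eta} f'$ recalled in the introduction, every element of $\mathcal{K}$ equals $T f'$ for some $f'$ with $\|f'\|_{L^2} \leq R$. Compactness of $\mathcal{K}$ in $L^2(0,T)$ therefore reduces to compactness of the linear operator $T \colon L^2(0,T) \to L^2(0,T)$.

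The kernel $K(s) = e^{-\eta s} s^{-a}/\Gamma(1-a)$ belongs to $L^1(0,T)$ for every $a \in (0,1)$, but it fails to be square-integrable near $s = 0$ once $a \geq 1/2$, so a direct Hilbert--Schmidt argument on $T$ is unavailable. I would therefore regularize: for $\epsilon > 0$, set $K_\epsilon(s) = K(s)\,\mathbf{1}_{\{s \geq \epsilon\}}$ and let $T_\epsilon$ be the corresponding Volterra operator. Since $K$ is decreasing on $(0,\infty)$, $K_\epsilon$ is bounded by $K(\epsilon)$, so the two-variable kernel $(t,s) \mapsto K_\epsilon(t-s)\mathbf{1}_{\{s<t\}}$ lies in $L^2((0,T)\times(0,T))$; hence $T_\epsilon$ is Hilbert--Schmidt and in particular compact on $L^2(0,T)$.

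Next, Young's convolution inequality (applied after extending functions by zero to $\mathbb{R}$) yields
\[
\|T - T_\epsilon\|_{L^2 \to L^2} \;\leq\; \|K - K_\epsilon\|_{L^1(0,T)} \;=\; \frac{1}{\Gamma(1-a)}\int_0^\epsilon e^{-\eta s} s^{-a}\,ds \;\longrightarrow\; 0 \quad\text{as }\epsilon \to 0.
\]
Consequently $T$ is the operator-norm limit of compact operators and is therefore compact on $L^2(0,T)$. Since $\overline{B}_R$ is weakly compact by Banach--Alaoglu in the Hilbert space $L^2(0,T)$ and any compact operator is continuous from the weak topology on bounded sets into the norm topology, the image $\mathcal{K} = T(\overline{B}_R)$ is a compact subset of $L^2(0,T)$.

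The only real obstacle is the non-square-integrability of $K$ in the range $a \in [1/2,1)$; the truncation step bypasses this uniformly in $a$, after which the argument is a standard Hilbert--Schmidt plus operator-norm approximation scheme, and closedness of the image follows from the weak compactness of $\overline{B}_R$ together with weak-to-norm continuity of $T$.
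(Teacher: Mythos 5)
Your proof is correct, and it is worth noting at the outset that the paper itself offers no proof of Lemma \ref{lem3.4}: the statement is imported verbatim from \cite{mary2019}, so there is no in-paper argument to compare against, and your proposal serves as a valid self-contained substitute. The identification $\mathcal{K}=T(\overline{B}_R)$ with $T=I^{1-a,\eta}$ is exactly right, since the definition of $\partial_t^{a,\eta}f$ already exhibits it as the convolution of the kernel $K(s)=e^{-\eta s}s^{-a}/\Gamma(1-a)$ against $f'$, and every $g\in L^2(0,T)$ arises as $f'$ for some admissible $f$. Each subsequent step is sound: $K_\epsilon$ is bounded by $K(\epsilon)$ because both factors of $K$ are decreasing on $(0,\infty)$, so the truncated Volterra kernel lies in $L^2$ of the bounded square and $T_\epsilon$ is Hilbert--Schmidt; Young's inequality gives $\|T-T_\epsilon\|_{L^2\to L^2}\le\|K\|_{L^1(0,\epsilon)}\to 0$ since $s^{-a}$ is integrable at the origin for $a<1$; and the passage from precompactness of $T(\overline{B}_R)$ to genuine compactness via weak sequential compactness of the closed ball together with the weak-to-norm continuity of the compact operator $T$ is the correct way to obtain closedness of the image. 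You are also right to flag that a direct Hilbert--Schmidt argument breaks down for $a\ge\tfrac12$; the truncation handles all $a\in(0,1)$ uniformly. An alternative route, common in the literature on fractional boundary damping, is to verify the Fr\'echet--Kolmogorov criterion by estimating translates of elements of $\mathcal{K}$ directly from the kernel; your operator-norm approximation scheme reaches the same conclusion with less computation and makes the structural reason for compactness (namely, that $I^{1-a,\eta}$ is a compact operator on $L^2(0,T)$) explicit.
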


\begin{theorem}
The $C_{0}$-semigroup $\{\mathcal{S}(t)\}_{t\geq 0}$ is not exponentially stable.
\end{theorem}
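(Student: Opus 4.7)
The plan is to verify the hypothesis of Theorem \ref{thm3.1}, namely that $\mathcal{S}(t) - \mathcal{S}_0(t) \colon \mathcal{H}_0 \to \mathcal{H}$ is compact for a fixed $t = T > 0$. Fix a bounded sequence $\{U_0^n\} \subset \mathcal{H}_0$ and write the trajectories as $\mathcal{S}(\cdot)U_0^n = (V^n, V^n_t, \phi_1^n, P^n, P^n_t, \phi_2^n)$ and $\mathcal{S}_0(\cdot)U_0^n = (\widetilde V^n, \widetilde V^n_t, 0, \widetilde P^n, \widetilde P^n_t, 0)$. The pair $(W^n, Q^n) := (V^n - \widetilde V^n, P^n - \widetilde P^n)$ solves the same wave-type system with zero initial data, homogeneous Dirichlet condition at $x=0$, and forced boundary data at $x = L$:
\begin{align*}
\alpha W^n_x(L,t) - \gamma\beta Q^n_x(L,t) &= -l_1\partial^{a,\eta}_t V^n(L,t), \\
\beta Q^n_x(L,t) - \gamma\beta W^n_x(L,t) &= -l_2\partial^{a,\eta}_t P^n(L,t).
\end{align*}

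The first step is to obtain uniform $L^2(0,T)$ bounds on the velocity traces $V^n_t(L,\cdot)$ and $P^n_t(L,\cdot)$. For the conservative companion this is exactly Lemma \ref{lem3.5}. For the dissipative system I would redo the multiplier identity with $q(x) = x$ on \eqref{problem1.1}: the new boundary contribution at $x = L$ involves the fractional outputs, which by Cauchy--Schwarz together with Lemma \ref{nuse1} satisfy $\|O_i^n\|_{L^2(0,T)}^2 \leq C\int_0^T\!\!\int_{-\infty}^{+\infty}(\xi^2+\eta)|\phi_i^n|^2\,d\xi\,dt$, and this last quantity is bounded by $E(0)$ through the dissipation identity \eqref{2.9}. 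Once the traces are bounded in $L^2(0,T)$, Lemma \ref{lem3.4} places the fractional derivatives $\partial^{a,\eta}_t V^n(L,\cdot)$ and $\partial^{a,\eta}_t P^n(L,\cdot)$ in a precompact subset of $L^2(0,T)$, so along a subsequence they converge strongly.

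The second step upgrades strong convergence of the boundary forcings to strong convergence of the first four components of the difference. The map assigning to boundary data $(g_1,g_2)\in L^2(0,T)^2$ the value at time $T$ of the forced solution $(W,W_t,Q,Q_t)$ (with zero initial data) is continuous into the energy space $(H^1_*(0,L)\times L^2(0,L))^2$; this is the standard by-duality consequence of the hidden regularity expressed in Lemma \ref{lem3.5}. Hence $(W^n,W^n_t,Q^n,Q^n_t)(T)$ is Cauchy in the energy norm along the subsequence. For the last two components, solving the $\xi$-ODEs \eqref{2.301} and \eqref{2.401} with zero initial data gives the explicit representation
\begin{align*}
\phi_i^n(\xi, T) = \mu(\xi)\int_0^T e^{-(\xi^2+\eta)(T-s)} f_i^n(s)\,ds, \qquad i=1,2,
\end{align*}
with $f_1^n = V^n_t(L,\cdot)$ and $f_2^n = P^n_t(L,\cdot)$. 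A direct computation using Lemma \ref{nuse1} shows that the kernel $K(\xi,s) = \mu(\xi) e^{-(\xi^2+\eta)(T-s)}$ is square-integrable on $(-\infty,+\infty)\times(0,T)$, so the associated operator $f \mapsto \phi(\cdot, T)$ is Hilbert--Schmidt, hence compact, from $L^2(0,T)$ into $L^2(-\infty,+\infty)$. Boundedness of $\{f_i^n\}$ from step one already gives precompactness of $\{\phi_i^n(\cdot, T)\}$ in $L^2(-\infty,+\infty)$.

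Combining the two pieces yields a subsequence along which $\mathcal{S}(T)U_0^n - \mathcal{S}_0(T)U_0^n$ converges in $\mathcal{H}$, so the difference is compact and Theorem \ref{thm3.1} immediately rules out exponential stability. I expect the main technical obstacle to be the first step, namely the uniform $L^2(0,T)$ bound on $V^n_t(L,\cdot)$ and $P^n_t(L,\cdot)$ for the \emph{dissipative} system: the multiplier $xV_x$, $xP_x$ generates boundary terms in the fractional outputs $O_i^n$ that must be cleanly absorbed by the dissipation estimate while lower-order interior terms are controlled uniformly in $n$ by the basic energy $E(0)$. Once this estimate is in place, everything else cascades from the compactness machinery already assembled in Lemmas \ref{lem3.4} and \ref{nuse1}.
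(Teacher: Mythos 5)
Your proposal is correct and shares the paper's skeleton: the same reduction to Theorem \ref{thm3.1}, the same decomposition into the conservative group $\mathcal{S}_0(t)$ and the forced difference system with zero initial data, the same $q(x)=x$ multiplier identity (Lemmas \ref{lem3.2} and \ref{lem3.5}) to bound the velocity traces in $L^2(0,T)$, and the same use of Lemma \ref{lem3.4} to extract strongly convergent subsequences of $\partial^{a,\eta}_t V^n(L,\cdot)$ and $\partial^{a,\eta}_t P^n(L,\cdot)$. Where you diverge is in the final upgrade to strong convergence in $\mathcal{H}$: the paper integrates the energy identity of the difference system to get \eqref{3.70}, whose right-hand side converges (strong times weak in $L^2(0,T)$), and then combines convergence of the energies with the weak convergence of the states; you instead invoke the boundedness of the Neumann boundary-data-to-state map (the duality consequence of the hidden regularity in Lemma \ref{lem3.5}) so that strong $L^2(0,T)$ convergence of the boundary forcings makes $(W^n,W^n_t,Q^n,Q^n_t)(T)$ Cauchy in the energy norm, and you handle $\phi_i^n(\cdot,T)$ separately via the explicit Duhamel formula for the $\xi$-ODE, checking that the kernel $\mu(\xi)e^{-(\xi^2+\eta)(T-s)}$ is square-integrable (which it is: the integrand $|\xi|^{2a-1}\bigl(1-e^{-2(\xi^2+\eta)T}\bigr)/\bigl(2(\xi^2+\eta)\bigr)$ is integrable near $0$ since $2a-1>-1$ and at infinity since $2a-3<-1$), so that the map $f\mapsto\phi(\cdot,T)$ is Hilbert--Schmidt. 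Your route is somewhat more explicit than the paper's at exactly the point where the paper is sketchiest --- the paper never addresses convergence of the $\phi_i^n$ components separately, folding them into the energy identity and the ``weak convergence plus convergence of norms'' argument --- and your Hilbert--Schmidt observation would make a clean addition; the only over-complication on your side is the worry about absorbing fractional boundary terms in the trace estimate, which is unnecessary because the multiplier identity bounds $\int_0^T\mathcal{J}(L,\tau)\,d\tau$ directly and $\mathcal{J}(L,\tau)$ already contains the velocity traces with positive sign.
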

\begin{proof}
To prove the lack of exponential stability, we shall use Theorem \ref{thm3.1}. Therefore, we need to show that the difference ${\mathcal{S}(t)}-{\mathcal{S}_{0}(t)}$ is a compact operator.

For any bounded sequence $U^{n}_{0}=\left(V^{n}_{0}, V^{n}_{1}, 0, P^{n}_{0}, P^{n}_{1}, 0\right)\in\mathcal{H}_0$, the sequence
\begin{align*}
U^{n}=\mathcal{S}(t){U^{n}_{0}}=\left(V^{n}, V^{n}_{t}, \phi^{n}_{1}, P^{n}, P^{n}_{t}, \phi^{n}_{2}\right)\in\mathcal{H}
\end{align*}
are bounded solutions of system \eqref{2.3}-\eqref{2.7}, and the sequence
\begin{align*}
\widetilde{U}^{n}=\mathcal{S}_{0}(t){U^{n}_{0}}=\left(\widetilde{V}^{n}, \widetilde{V}^{n}_{t}, 0, \widetilde{P}^{n},\widetilde{P}^{n}_{t}, 0\right)\in\mathcal{H}_0
\end{align*}
are bounded solutions of system \eqref{pro3.1}-\eqref{pro3.2}. Then, we can obtain
\begin{align*}
\left(\mathcal{S}(t)-\mathcal{S}_{0}(t)\right)U^{n}_{0}=\left(\widehat{V}^{n}, \widehat{V}^{n}_{t},  \phi^{n}_{1}, \widehat{P}^{n}, \widehat{P}^{n}_{t},  \phi^{n}_{2}\right)\in\mathcal{H}
\end{align*}
where
\begin{align*}
\widehat{V}^{n}_{x}=V^{n}_{x}-\widetilde{V}^{n}_{x},\quad \widehat{P}^{n}_{x}=P^{n}_{x}-\widetilde{P}^{n}_{x},&\quad\widehat{V}^{n}_{t}=V^{n}_{t}-\widetilde{V}^{n}_{t},\quad\widehat{P}^{n}_{t}=P^{n}_{t}-\widetilde{P}^{n}_{t},
\end{align*}
satisfies
\begin{align}
&\rho \widehat{V}^{n}_{tt}-\alpha \widehat{V}^{n}_{xx}+\gamma\beta \widehat{P}^{n}_{xx}=0,&&(x,t)\in  (0,L)\times (0,T), \label{3.5}\\
&\partial_{t}\phi^{n}_{1}(\xi,t)+\left(\xi^{2}+\eta\right)\phi^{n}_{1}(\xi,t)-{V}^{n}_{t}(L,t)\mu(\xi)=0,&&(\xi,t)\in(-\infty,+\infty)\times(0,+\infty),\label{3.51}\\
&\mu \widehat{P}^{n}_{tt}-\beta \widehat{P}^{n}_{xx}+\gamma\beta \widehat{V}^{n}_{xx}=0,  && (x,t)\in(0,L)\times (0,T),\label{3.52}\\
&\partial_{t}\phi^{n}_{2}(\xi,t)+\left(\xi^{2}+\eta\right)\phi^{n}_{2}(\xi,t)-{P}^{n}_{t}(L,t)\mu(\xi)=0,&&(\xi,t)\in(-\infty,+\infty)\times(0,+\infty),\label{3.53}
\end{align}
with the boundary conditions
\begin{align}
&\widehat{V}^{n}(0,t)=\widehat{P}^{n}(0,t)=0,& t\in (0,T),\\
&\alpha \widehat{V}^{n}_{x}(L,t)-\gamma\beta \widehat{P}^{n}_{x}(L,t)=-l_{1}\partial^{a,\eta}_{t}V^{n}(L,t),& t\in (0,T),\label{3.54}\\
&\beta \widehat{P}^{n}_{x}(L,t)-\gamma\beta \widehat{V}^{n}_{x}(L,t)=-l_{2}\partial^{a,\eta}_{t}P^{n}(L,t),& t\in (0,T),\label{3.55}
\end{align}
and the initial conditions
\begin{align}\label{3.56}
\left(\widehat{V}^{n}(x,0), \widehat{V}^{n}_{t}(x,0), \phi_1(0), \widehat{P}^{n}(x,0), \widehat{P}^{n}_{t}(x,0), \phi_2(0)\right)=\left(0,0,0,0,0,0\right),\quad x \in (0,L).
\end{align}
The energy associated with system \eqref{3.5}-\eqref{3.56} is defined by
\begin{align*}
\widehat{E}(t)=&\frac{1}{2}\int^{L}_{0}\left[\rho\left|\widehat{V}^{n}_{t}\right|^{2}+\alpha_{1}\left|\widehat{V}^{n}_{x}\right|^{2}+\mu\left|\widehat{P}^{n}_{t}\right|^{2}+\beta\left|\gamma \widehat{V}^{n}_{x}-\widehat{P}^{n}_{x}\right|^{2}\right] dx\\
&+\frac{\sin(a\pi)}{2\pi}\int^{\infty}_{-\infty}\left(l_{1}|\phi^{n}_{1}|^{2}+l_{2}|\phi^{n}_{2}|^{2}\right)d\xi.
\end{align*}
Multiplying \eqref{3.5}, \eqref{3.52}, \eqref{3.51} and \eqref{3.53} by $ \overline{\widehat{V}^{n}_{t}}$ , $\overline{\widehat{P}^{n}_{t}}$, $l_{1}\frac{\sin(a\pi)}{\pi}\phi^{n}_{1}$ and $l_{2}\frac{\sin(a\pi)}{\pi}\phi^{n}_{2}$, respectively, integrating by parts over $(0,L)$ and use the boundary conditions \eqref{3.54}-\eqref{3.55}, we can obtain
\begin{align*}
\frac{d}{dt}\widehat{E}(t)=-l_{1}\partial^{a,\eta}_{t}V^{n}(L,t)\widehat{V}^{n}_{t}(L,t)-l_{2}\partial^{a,\eta}_{t}P^{n}(L,t)\widehat{P}^{n}_{t}(L,t).
\end{align*}
By using the fact that $\widehat{V}^{n}_{t}=V^{n}_{t}-\widetilde{V}^{n}_{t}, \widehat{P}^{n}_{t}=P^{n}_{t}-\widetilde{P}^{n}_{t}$ and the energy associated with system \eqref{3.5}-\eqref{3.56}, we have
\begin{equation}\label{3.60}
\begin{aligned}
\frac{d}{dt}\widehat{E}(t)=&\frac{d}{dt}E(t)+l_{1}\partial^{a,\eta}_{t}V^{n}(L,t)\widetilde{{V}}^{n}_{t}(L,t)+l_{2}\partial^{a,\eta}_{t}P^{n}(L,t)\widetilde{{P}}^{n}_{t}(L,t)\\
=&-\frac{\sin(a\pi)}{\pi}\int^{\infty}_{-\infty}(\xi^{2}+\eta)\left(l_{1}|\phi^n_{1}|^{2}+l_{2}|\phi^n_{2}|^{2}\right)d\xi\\
&+l_{1}\partial^{a,\eta}_{t}V^{n}(L,t)\widetilde{{V}}^{n}_{t}(L,t)+l_{2}\partial^{a,\eta}_{t}P^{n}(L,t)\widetilde{{P}}^{n}_{t}(L,t).
\end{aligned}
\end{equation}
Integrating \eqref{3.60} over $[0,t]$ and using the initial conditions \eqref{3.56}, we obtain
\begin{equation}\label{3.70}
\begin{aligned}
\widehat{E}(t)&+\int^t_0\frac{\sin(a\pi)}{\pi}\int^{\infty}_{-\infty}(\xi^{2}+\eta)\left(l_{1}|\phi_{1}|^{2}+l_{2}|\phi_{2}|^{2}\right)d\xi d\tau\\
&=\int^{t}_{0} \left[l_{1}\partial^{a,\eta}_{t}V^{n}(L,\tau)\widetilde{{V}}^{n}_{t}(L,\tau)+l_{2}\partial^{a,\eta}_{t}P^{n}(L,\tau)\widetilde{{P}}^{n}_{t}(L,\tau)\right]d\tau.
\end{aligned}
\end{equation}

Suppose that initial data $\left({V}^{n}_{0}, {V}^{n}_{1}, 0, {P}^{n}_{0}, {P}^{n}_{1}, 0\right)\in\mathcal{H}_{0}$ is a bounded sequence. The solution of system \eqref{3.5}-\eqref{3.56} is denoted by $\left(\widehat{V}^{n},\widehat{V}^{n}_{t},\phi^{n}_{1},\widehat{P}^{n},\widehat{P}^{n}_{t},\phi^{n}_{2}\right)$. Since the dissipation of $\mathcal{S}(t)$ and conservation of $\mathcal{S}_{0}(t)$, the energy $\widehat{E}(t)$ associated with system \eqref{3.5}-\eqref{3.56} is non-increasing and bounded. Then, the above sequence have a subsequence converges weakly, and we still denote as
\begin{align}\label{3.80}
\left(\widehat{V}^{n},\widehat{V}^{n}_{t},\phi^{n}_{1},\widehat{P}^{n},\widehat{P}^{n}_{t},\phi^{n}_{2}\right).
\end{align}
Next, we will prove that the sequence \eqref{3.80} converges strongly. Lemmas \ref{lem3.2} and \ref{lem3.5} imply that $V^{n}_{t}(L,t)$, $P^{n}_{t}(L,t)$,$\widetilde{V}^{n}_{t}(L,t)$, and $\widetilde{P}^{n}_{t}(L,t)$ are bounded in $L^{2}(0,T)$. Using Lemma \ref{lem3.4}, we conclude that there exist strongly convergent subsequences $\partial^{a,\eta}_{t}V^{nk}(L,t)$, $\partial^{a,\eta}_{t}P^{nk}(L,t)$.
Therefore, the right hand side of the equality \eqref{3.70} converges strongly, and the solution
\begin{align*}
\left(\widehat{V}^{nk},\widehat{V}^{nk}_{t},\phi^{nk}_{1},\widehat{P}^{nk},\widehat{P}^{nk}_{t},\phi^{nk}_{2}\right)
\end{align*}
converges in norm. This implies that $\mathcal{S}(t)-\mathcal{S}_{0}(t)$ can map any bounded sequence to a strongly convergent subsequence, which means that $\mathcal{S}(t)-\mathcal{S}_{0}(t)$ is a compact operator from $\mathcal{H}_{0}$ to $\mathcal{H}$. Thanks to Theorem \ref{thm3.1}, we have proved the theorem.
\end{proof}
\section{Polynomial stability for piezoelectric beams without thermal effects}
In the case where $\{\mathcal{S}(t)\}_{t\geq0}$ is not exponentially stable, we look for a polynomial decay rate. In this section, we use the result of Borichev and Tomilov to show the polynomial stability of the piezoelectric system \eqref{2.3}-\eqref{2.8}.

\begin{theorem}\label{thm4.1}
\textup{({\cite{bor2010}})}
Assume that $\mathcal{A}$  is the infinitesimal generator of bounded $C_{0}-$ semigroup of contractions $\{\mathcal{S}(t)\}_{t\geq0}$  on Hilbert space $H$. If $i\mathbb{R}\subset \rho (\mathcal{A})$, then for any $k>0$, the following conditions are equivalent: \\
$(1)$ $\left\|\left(i\lambda I -\mathcal{A}\right)^{-1}\right\|_{\mathcal{L}(\mathcal{H})}=o\left(|\lambda |^{k}\right),\lambda\rightarrow \infty$;\\
$(2)$ $\left\|\mathcal{S}(t)\mathcal{A}^{-1}\right\|_{\mathcal{L}(\mathcal{H})}=o\left(t^{-\frac{1}{k}}\right),t\rightarrow \infty$.
\end{theorem}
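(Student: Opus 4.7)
The plan is to establish the equivalence of the resolvent estimate (1) and the orbital decay (2) as a two-way implication, crucially exploiting that the ambient space is Hilbert. The easier direction (2) $\Rightarrow$ (1) proceeds through the Laplace transform: since $\{\mathcal{S}(t)\}_{t\geq 0}$ is bounded and $i\mathbb{R}\subset\rho(\mathcal{A})$, for every $x\in H$ one has the representation $(i\lambda I-\mathcal{A})^{-1}\mathcal{A}^{-1}x=-\int_{0}^{\infty}e^{-i\lambda t}\mathcal{S}(t)\mathcal{A}^{-1}x\,dt$. The hypothesis $\|\mathcal{S}(t)\mathcal{A}^{-1}\|=o(t^{-1/k})$, combined with an integration by parts that produces a factor $(i\lambda)^{-1}$ and with the uniform bound on $\mathcal{S}(t)$, yields $\|(i\lambda I-\mathcal{A})^{-1}\|=o(|\lambda|^{k})$ as $|\lambda|\to\infty$.

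For the harder direction (1) $\Rightarrow$ (2), I would represent $\mathcal{S}(t)\mathcal{A}^{-1}$ via an inverse Laplace integral and deform the vertical contour $\{\mathrm{Re}\,z=\omega\}$ onto the imaginary axis, which is legal because $i\mathbb{R}\subset\rho(\mathcal{A})$ and the resolvent grows only polynomially there. This gives an identity of the form $\mathcal{S}(t)\mathcal{A}^{-1}x=\frac{1}{2\pi}\int_{\mathbb{R}}e^{ist}(isI-\mathcal{A})^{-1}\mathcal{A}^{-1}x\,ds$ in a suitably regularized sense. For fixed $t$ I would then split the integral at a cutoff $R=R(t)$: the low-frequency piece $|s|\leq R$ is controlled by the pointwise resolvent bound over a bounded interval, while the high-frequency piece requires the Hilbert-space structure. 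An optimal choice of $R(t)$ balances the two and produces the rate $t^{-1/k}$.

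The main obstacle is the high-frequency tail, since a crude pointwise bound via (1) fails to be integrable for general $k$. The key device is a Plancherel-type identity: in a Hilbert space, $\int_{\mathbb{R}}\|(isI-\mathcal{A})^{-1}y\|^{2}\,ds$ is, up to constants, the square $L^{2}(\mathbb{R}_{+})$-norm of $\mathcal{S}(\cdot)y$, which is controlled by boundedness of the semigroup. Combining this $L^{2}$ control with the pointwise polynomial bound via Cauchy--Schwarz provides the needed decay of the tail. A careful tracking of the $o(|\lambda|^{k})$ hypothesis (as opposed to a mere $O$) then upgrades the conclusion from $O(t^{-1/k})$ to $o(t^{-1/k})$, closing the equivalence. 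I expect the Plancherel step and the contour deformation near infinity to be the technically most demanding points, since they are precisely where Hilbert-space geometry (rather than general Banach-space arguments, which are known to yield only a logarithmic loss) is indispensable.
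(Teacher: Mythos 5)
You should first be aware that the paper does not prove this statement at all: Theorem~\ref{thm4.1} is quoted verbatim from Borichev--Tomilov \cite{bor2010} and used as a black box in the proof of Theorem~\ref{thm4.5}, so there is no in-paper argument to compare yours against. Judged as a sketch of the cited theorem itself, your outline correctly locates the overall landscape (a quantified Tauberian equivalence, with the Hilbert-space input being what removes the logarithmic loss of the Banach-space version), but two of its load-bearing steps fail as written.

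For (2) $\Rightarrow$ (1), the representation $(i\lambda I-\mathcal{A})^{-1}\mathcal{A}^{-1}x=-\int_0^\infty e^{-i\lambda t}\mathcal{S}(t)\mathcal{A}^{-1}x\,dt$ is not available: the Laplace integral of a bounded semigroup converges only for $\mathrm{Re}\,z>0$, the assumed decay $o(t^{-1/k})$ is not integrable on $(0,\infty)$ for any $k\geq 1$, and integrating by parts trades the integrand for $e^{-i\lambda t}\mathcal{S}(t)x$, which is worse. The standard (Banach-space) argument for this direction instead uses the finite-time identity $x=(isI-\mathcal{A})\int_0^t e^{-is\tau}\mathcal{S}(\tau)x\,d\tau+e^{-ist}\mathcal{S}(t)x$ together with $\mathcal{S}(t)(isI-\mathcal{A})^{-1}=\mathcal{S}(t)\mathcal{A}^{-1}\bigl(is(isI-\mathcal{A})^{-1}-I\bigr)$ and an optimization over $t=t(s)$. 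For (1) $\Rightarrow$ (2), the decisive error is the Plancherel claim: for a merely bounded, non-exponentially-stable semigroup (e.g.\ a unitary group) one has $\int_0^\infty\|\mathcal{S}(t)y\|^2\,dt=\infty$, so $\int_{\mathbb{R}}\|(isI-\mathcal{A})^{-1}y\|^2\,ds$ is \emph{not} controlled by boundedness of the semigroup. The correct identity lives on vertical lines $\mathrm{Re}\,z=\xi>0$, where $\int_{\mathbb{R}}\|((\xi+is)I-\mathcal{A})^{-1}y\|^2\,ds=2\pi\int_0^\infty e^{-2\xi t}\|\mathcal{S}(t)y\|^2\,dt\leq \pi M^2\xi^{-1}\|y\|^2$, and managing the $\xi^{-1}$ blow-up as the contour approaches the axis is precisely the content of \cite{bor2010}. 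Moreover, the cutoff-plus-Cauchy--Schwarz splitting you describe is essentially the Batty--Duyckaerts argument, which yields $t^{-1/k}\log t$ rather than $t^{-1/k}$; your sketch names the place where Hilbert-space structure must enter but does not supply the mechanism (a functional-calculus/iteration argument in \cite{bor2010}) that actually removes the logarithm. For the purposes of this paper, citing \cite{bor2010} as the authors do is the appropriate treatment.
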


First, let us consider the resolvent equation
\begin{align}\label{4.1}
i\lambda U-\mathcal{A}U=F,\quad\lambda\in\mathbb{R}.
\end{align}
Taking the real part of inner product between \eqref{4.1} and $U$ in $\mathcal{H}$, then we can obtain
\begin{align}\label{4.1+}
Re\langle -\mathcal{A}U, U \rangle_\mathcal{H}=Re\langle F, U \rangle_\mathcal{H}\leq C\|U\|_{\mathcal{H}}\|F\|_{\mathcal{H}},
\end{align}
where $F=(F_{1},F_{2},F_{3},F_{4},F_{5},F_{6})$. Rewrite the previous equation according to its components:
\begin{align}
i\lambda V-f&=F_{1},\label{4.11}\\
 i\lambda\rho f-{\alpha}V_{xx}+{\gamma\beta}P_{xx}&={\rho}F_{2},\label{4.12}\\
i\lambda \phi_{1}+\left(\xi^{2}+\eta\right)\phi_{1}(\xi,t)-f(L)\mu(\xi)&=F_{3},\label{4.13}\\
i\lambda P-g&=F_{4},\label{4.14}\\
i\lambda\mu g-{\beta}P_{xx}+{\gamma\beta}V_{xx}&={\mu}F_{5},\label{4.15}\\
i\lambda \phi_{2}+\left(\xi^{2}+\eta\right)\phi_{2}(\xi,t)-g(L)\mu(\xi)&=F_{6}.\label{4.16}
\end{align}

For further proof, we introduce some functionals as follows.
\begin{align*}
I_{V}&=\rho q(L)|f(L)|^{2}+\alpha_{1}q(L)|V_{x}(L,t)|^{2},\\
I_{P}&=\mu q(L)|g(L)|^{2}+\beta q(L)|(\gamma V_{x}-P_{x})(L,t)|^{2},\\
\mathcal{N}^{2}&=\int^{L}_{0}\rho |f|^{2} dx +\int^{L}_{0}\mu|g|^{2} dx +\int^{L}_{0}\alpha_{1}|V_{x}|^{2}dx +\int^{L}_{0}\beta|\gamma V_{x}-P_{x}|^{2}dx.
\end{align*}
\begin{lemma}\label{lem3.3}
Let $F=(F_{1},F_{2},F_{3},F_{4},F_{5},F_{6})\in\mathcal{H}$, $\lambda\in \mathbb{R}$, and ${U}=(V,f,,\phi_{1},P,g,\phi_{2})\in \mathcal{D}(\mathcal{A})$ satisfies  $i\lambda{U}-\mathcal{A}{U}={F}$. Then for $q\in C^{2}([0,L])$, $q(0)=0$, we have
\begin{align*}
&I_{V}+I_{P}-\int^{L}_{0}\rho q_{x}|f|^{2} dx -\int^{L}_{0}\mu q_{x}|g|^{2} dx -\int^{L}_{0}\alpha_{1} q_{x}|V_{x}|^{2}dx -\int^{L}_{0}\beta q_{x}|\gamma V_{x}-P_{x}|^{2}dx\\
=&-R_{1}-R_{2},
\end{align*}
where
\begin{align*}
R_{1}=Re\int^{L}_{0}\left(2\mu q F_{4}\overline{P}_{x}+2\mu q \overline{F}_{3,x}g \right)dx,\\
R_{2}=Re\int^{L}_{0}\left(2\rho q F_{2}\overline{V}_{x}+2\rho q \overline{F}_{1,x}f\right) dx.
\end{align*}
\end{lemma}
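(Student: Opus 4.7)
The plan is to derive the claimed identity by multiplying \eqref{4.12} by $2q\overline{V_x}$ and \eqref{4.15} by $2q\overline{P_x}$, integrating on $(0,L)$, taking real parts, and summing the two resulting equalities. The multiplier choice mirrors, in the frequency domain, the $qV_x,qP_x$ multipliers used in Lemma~\ref{lem3.2}, and is calibrated so that after invoking the relation $\alpha=\alpha_{1}+\gamma^{2}\beta$ the boundary contribution at $x=L$ assembles exactly into $I_V+I_P$.

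For the first identity, conjugating and differentiating \eqref{4.11} yields $i\lambda\overline{V_x}=-\overline{f_x}-\overline{F_{1,x}}$, which I substitute into the term $2qi\lambda\rho f\overline{V_x}$ to obtain $-2q\rho f\overline{f_x}-2q\rho f\overline{F_{1,x}}$; after taking the real part and integrating by parts (using $q(0)=0$ and $f(0)=0$), this contributes $-q(L)\rho|f(L)|^{2}+\int_{0}^{L}q_{x}\rho|f|^{2}\,dx$ together with the source remainder $-\operatorname{Re}\int_{0}^{L}2q\rho\overline{F_{1,x}}f\,dx$. The term $-2q\alpha V_{xx}\overline{V_x}$ is handled via $\operatorname{Re}(V_{xx}\overline{V_x})=\tfrac{1}{2}\tfrac{d}{dx}|V_x|^{2}$, yielding $-q(L)\alpha|V_x(L)|^{2}+\int_{0}^{L}q_{x}\alpha|V_x|^{2}\,dx$, while the right-hand side contributes $\operatorname{Re}\int_{0}^{L}2q\rho F_2\overline{V_x}\,dx$; the cross-term $\operatorname{Re}\int_{0}^{L}2q\gamma\beta P_{xx}\overline{V_x}\,dx$ is kept untouched. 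The analogous treatment of \eqref{4.15}, with $i\lambda\overline{P_x}$ supplied by differentiating \eqref{4.14}, produces the symmetric identity with $f,V_x,\alpha$ and the $V$-source data replaced by $g,P_x,\beta$ and the $P$-source data, plus the companion cross-term $\operatorname{Re}\int_{0}^{L}2q\gamma\beta V_{xx}\overline{P_x}\,dx$.

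The decisive step is fusing the two cross-terms. From the pointwise identity
\begin{equation*}
\operatorname{Re}\bigl(P_{xx}\overline{V_x}+V_{xx}\overline{P_x}\bigr)=\frac{d}{dx}\operatorname{Re}\bigl(P_x\overline{V_x}\bigr),
\end{equation*}
one integration by parts converts their sum into $2\gamma\beta q(L)\operatorname{Re}(P_x(L)\overline{V_x(L)})-2\gamma\beta\int_{0}^{L}q_{x}\operatorname{Re}(P_x\overline{V_x})\,dx$. Summing the two multiplier identities and using the algebraic reorganization
\begin{equation*}
\alpha|V_x|^{2}+\beta|P_x|^{2}-2\gamma\beta\operatorname{Re}(V_x\overline{P_x})=\alpha_{1}|V_x|^{2}+\beta|\gamma V_x-P_x|^{2},
\end{equation*}
which is equivalent to $\alpha=\alpha_1+\gamma^{2}\beta$, collapses both the $x=L$ bracket into $-(I_V+I_P)$ and the $q_x$-interior bracket into the target integrand $\rho|f|^{2}+\mu|g|^{2}+\alpha_{1}|V_x|^{2}+\beta|\gamma V_x-P_x|^{2}$. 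The contributions at $x=0$ vanish from $q(0)=0$ together with the trace conditions on $V,P,f,g$, and rearranging the resulting identity yields the statement, with $R_1,R_2$ collecting the source remainders.

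I expect the main obstacle to be precisely this cross-term fusion: without observing that $\operatorname{Re}(P_{xx}\overline{V_x}+V_{xx}\overline{P_x})$ is an exact $x$-derivative, the second-order pieces cannot be lowered to match the boundary data, and the consolidation into $\alpha_{1}|V_x|^{2}+\beta|\gamma V_x-P_x|^{2}$ fails. Once this observation is in place, the remaining manipulations reduce to routine bookkeeping of the source terms.
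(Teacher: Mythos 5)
Your proof is correct and follows essentially the same route as the paper: the same multipliers $q\overline{V_x}$, $q\overline{P_x}$ applied to \eqref{4.12} and \eqref{4.15}, the same use of \eqref{4.11} and \eqref{4.14} to convert the $i\lambda$ terms into exact derivatives plus source remainders, and the same recombination $\alpha|V_x|^2+\beta|P_x|^2-2\gamma\beta\operatorname{Re}(V_x\overline{P_x})=\alpha_1|V_x|^2+\beta|\gamma V_x-P_x|^2$ (the paper performs your ``cross-term fusion'' implicitly by assembling $q\frac{d}{dx}|\gamma V_x-P_x|^2$ directly in its equation \eqref{4.5}). Your bookkeeping also quietly corrects an index slip in the paper's $R_1$, where $F_4,F_{3,x}$ should read $F_5,F_{4,x}$ to match the resolvent system \eqref{4.11}--\eqref{4.16}.
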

\begin{proof}
Let us multiply the equation \eqref{4.12} by $q \overline{V}_{x}$ and integrating on $[0,L]$, we have
\begin{align}\label{4.1'}
\int^{L}_{0}\left(- i\lambda \rho f q\overline{V}_{x}+\alpha qV_{xx}\overline{V}_{x}-\gamma\beta qP_{xx}\overline{V}_{x} \right)dx=-\int^{L}_{0}\rho qF_{2}\overline{V}_{x} dx.
\end{align}
Note that when we use \eqref{4.11}, the first term in \eqref{4.1'} can be rewritten as
\begin{align}\label{4.2}
\int^{L}_{0} -i\lambda \rho f q\overline{V}_{x}dx=\int^{L}_{0} \overline{(i\lambda V_{x})}\rho q f dx=\int^{L}_{0}\rho qf\overline{(f_{x}+F_{1,x})}dx.
\end{align}
Multiplying \eqref{4.15} by $q \overline{P}_{x}$ and integrating on $[0,L]$, we arrive at
\begin{align}\label{4.3}
\int^{L}_{0}\left(- i\lambda\mu g q\overline{P}_{x}+\beta qP_{xx}\overline{P}_{x}-\gamma\beta qV_{xx}\overline{P}_{x}\right) dx=-\int^{L}_{0}\mu qF_{5}\overline{P}_{x} dx.
\end{align}
Here we use equality \eqref{4.14}, then the first term in \eqref{4.3} can be rewritten as
\begin{align}\label{4.4}
\int^{L}_{0}- i\lambda \mu g q\overline{P}_{x}dx=\int^{L}_{0} \overline{(i\lambda P_{x})}\mu q g dx=\int^{L}_{0}\mu qg\overline{(g_{x}+F_{3,x})}dx.
\end{align}
Substituting \eqref{4.2}, \eqref{4.4} in \eqref{4.1'}, \eqref{4.3} respectively, then adding \eqref{4.1'} and \eqref{4.3}, we get
\begin{align}\label{4.5}
&\int^{L}_{0} \rho q \frac{d}{dx} |f|^{2} dx +\int^{L}_{0} \alpha_{1} q \frac{d}{dx}|V_{x}|^{2} dx +\int^{L}_{0} \mu q \frac{d}{dx}|g|^{2}dx +\int^{L}_{0}\beta q \frac{d}{dx} |\gamma V_{x}-P_{x}|^{2} dx
\nonumber\\
=&Re\int^{L}_{0}\left(-2\mu q F_{4}\overline{P}_{x}-2\mu q \overline{F}_{3,x}g -2\rho q F_{2}\overline{V}_{x}-2\rho q f\overline{F}_{1,x}\right)dx.
\end{align}
Finally, integrating by part, we get the equality in the lemma.
\end{proof}
\begin{lemma}\label{lem4.4}
Let $\mathcal{N}$, $I_{V}, I_{P}$ be functionals defined above, then we have
\begin{align}\label{4.40}
\mathcal{N}^{2}\leq C\left(I_{V}+I_{P}+\|F\|^{2}_{\mathcal{H}}\right),
\end{align}
where $C$ is a constant.
\end{lemma}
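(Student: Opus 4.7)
The plan is to apply Lemma \ref{lem3.3} with the explicit multiplier $q(x)=x$, which is admissible since $q\in C^{2}([0,L])$ and $q(0)=0$. With this choice one has $q_{x}\equiv 1$ and $q(L)=L>0$, so the four interior integrals in the identity of Lemma \ref{lem3.3} collapse to exactly $-\mathcal{N}^{2}$, and the identity reads
$$I_{V}+I_{P}-\mathcal{N}^{2}=-R_{1}-R_{2},\qquad\text{i.e.}\qquad \mathcal{N}^{2}=I_{V}+I_{P}+R_{1}+R_{2}.$$

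Next I would estimate $R_{1}$ and $R_{2}$. Each integrand is a product of a factor belonging to $F$ (namely $F_{2},F_{5}$, or the $x$-derivatives of $F_{1},F_{4}$, which are well defined because $F_{1},F_{4}\in H^{1}_{*}(0,L)$ as components of an element of $\mathcal{H}$) with one of the quantities $f,g,V_{x},\gamma V_{x}-P_{x}$ appearing in $\mathcal{N}$. Hence by Cauchy--Schwarz,
$$|R_{1}|+|R_{2}|\;\leq\; C\,\|F\|_{\mathcal{H}}\,\mathcal{N},$$
and Young's inequality with parameter $\tfrac12$ yields $|R_{1}|+|R_{2}|\leq \tfrac12\mathcal{N}^{2}+C'\|F\|_{\mathcal{H}}^{2}$. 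Substituting into the identity above and absorbing the $\tfrac12\mathcal{N}^{2}$ into the left-hand side gives $\mathcal{N}^{2}\leq 2(I_{V}+I_{P})+2C'\|F\|_{\mathcal{H}}^{2}$, which is the claim.

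I do not expect a genuine obstacle here; the only point to check carefully is that the Cauchy--Schwarz step produces $\|F\|_{\mathcal{H}}\,\mathcal{N}$ rather than $\|F\|_{\mathcal{H}}\,\|U\|_{\mathcal{H}}$. This is important because the $\phi_{1},\phi_{2}$ components of $U$ cannot be controlled by $I_{V}+I_{P}+\|F\|_{\mathcal{H}}^{2}$ alone through this multiplier argument. Fortunately the expressions for $R_{1}$ and $R_{2}$ in Lemma \ref{lem3.3} involve only the beam quantities $V,P,f,g$ (and $F_{1},F_{2},F_{4},F_{5}$), not the fractional variables $\phi_{i}$, so the bound $|R_{1}|+|R_{2}|\leq C\|F\|_{\mathcal{H}}\mathcal{N}$ holds as stated and the Young trick closes the estimate with no extra terms to handle.
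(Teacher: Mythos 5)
Your proposal is correct and follows essentially the same route as the paper: take $q(x)=x$ in Lemma \ref{lem3.3} so that the identity becomes $\mathcal{N}^{2}=I_{V}+I_{P}+R_{1}+R_{2}$, bound $|R_{1}|,|R_{2}|\leq C\mathcal{N}\|F\|_{\mathcal{H}}$ by Cauchy--Schwarz (noting $\int_0^L|P_x|^2\,dx\lesssim \mathcal{N}^2$), and close with Young's inequality. Your extra remark that $R_{1},R_{2}$ involve only the beam variables and not $\phi_{1},\phi_{2}$ is a useful observation the paper leaves implicit, but the argument is the same.
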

\begin{proof}
Let $q(x)=x, x\in [0,L]$. From the result of Lemma \ref{lem3.3}, we have
\begin{align}\label{4.41}
\mathcal{N}^{2}= I_{V}+I_{P}+R_{1}+R_{2}.
\end{align}
From the definition of $R_{1},R_{2}$, we conclude that
\begin{align}\label{4.42}
|R_{1}|\leq C\mathcal{N}\|F\|_{\mathcal{H}}, |R_{2}|\leq C\mathcal{N}\|F\|_{\mathcal{H}}.
\end{align}
By using the Cauchy-Schwartz inequality and the inequality \eqref{4.42}, we verify that the inequality \eqref{4.40} is valid.
\end{proof}
\begin{theorem}\label{iR}
Let $\rho(\mathcal{A})$ is the resolvent set of the operator $\mathcal{A}$, then $i\mathbb{R}\in\rho(\mathcal{A})$.
\end{theorem}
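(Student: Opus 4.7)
The plan is to rule out any purely imaginary spectrum via an approximate-spectrum argument. Because $\mathcal{A}$ is $m$-dissipative on $\mathcal{H}$ (Lemma \ref{lem2.1} together with Theorem \ref{them2.2}), its spectrum lies in the closed left half-plane, and any spectral point on $i\mathbb{R}$ must belong to the approximate point spectrum. Since $0\in\rho(\mathcal{A})$ is already established by Theorem \ref{them2.2}, it suffices to show that $i\lambda\in\rho(\mathcal{A})$ for every $\lambda\in\mathbb{R}\setminus\{0\}$.

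Arguing by contradiction, suppose $i\lambda\in\sigma(\mathcal{A})$ for some $\lambda\neq 0$. Then one can find a sequence $U_{n}=(V_{n},f_{n},\phi_{1}^{n},P_{n},g_{n},\phi_{2}^{n})\in\mathcal{D}(\mathcal{A})$ with $\|U_{n}\|_{\mathcal{H}}=1$ and $F_{n}:=(i\lambda I-\mathcal{A})U_{n}\to 0$ in $\mathcal{H}$. Taking the real part of $\langle F_{n},U_{n}\rangle_{\mathcal{H}}$ and invoking Lemma \ref{lem2.1} gives
\begin{equation*}
\frac{\sin(a\pi)}{\pi}\int_{-\infty}^{+\infty}(\xi^{2}+\eta)\bigl(l_{1}|\phi_{1}^{n}|^{2}+l_{2}|\phi_{2}^{n}|^{2}\bigr)d\xi=\operatorname{Re}\langle F_{n},U_{n}\rangle_{\mathcal{H}}\to 0,
\end{equation*}
so $\phi_{1}^{n},\phi_{2}^{n}\to 0$ in $L^{2}(-\infty,+\infty)$. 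Cauchy--Schwarz combined with Lemma \ref{nuse1} forces $\int\mu(\xi)\phi_{i}^{n}\,d\xi\to 0$, so the boundary conditions \eqref{2.501}--\eqref{2.502} applied to $U_{n}$ yield a $2\times 2$ system in $V_{n,x}(L),P_{n,x}(L)$ whose right-hand side tends to $0$; its coefficient matrix has determinant $\alpha\beta-\gamma^{2}\beta^{2}=\alpha_{1}\beta>0$ (thanks to $\alpha=\alpha_{1}+\gamma^{2}\beta$), hence $V_{n,x}(L),P_{n,x}(L)\to 0$. Testing the $\phi_{i}^{n}$-equations against the weight $\mu(\xi)/(i\lambda+\xi^{2}+\eta)^{2}$, whose $L^{2}$-norm is controlled by Lemma \ref{nuse3}, isolates $f_{n}(L),g_{n}(L)\to 0$; since $f_{n}=i\lambda V_{n}-F_{1}^{n}$ and $g_{n}=i\lambda P_{n}-F_{4}^{n}$ with $\lambda\neq 0$, also $V_{n}(L),P_{n}(L)\to 0$.

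At this stage $(V_{n},P_{n})$ satisfies a coupled second-order linear ODE system whose forcing, built from the components of $F_{n}$, tends to $0$ in $L^{2}(0,L)$ and whose Cauchy data at $x=L$ tends to $0$; continuous dependence on data for linear ODE systems (the principal-part matrix being invertible by the same computation $\alpha_{1}\beta>0$) yields $V_{n},P_{n}\to 0$ in $H^{2}(0,L)$. Consequently $f_{n}=i\lambda V_{n}-F_{1}^{n}\to 0$ and $g_{n}\to 0$ in $L^{2}$, and combining with $\phi_{i}^{n}\to 0$ gives $\|U_{n}\|_{\mathcal{H}}\to 0$, contradicting $\|U_{n}\|_{\mathcal{H}}=1$. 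The main obstacle I expect is the intermediate step that extracts $f_{n}(L),g_{n}(L)\to 0$ from weak information on $\phi_{i}^{n}$: it requires a careful choice of an $L^{2}$ test function on $(-\infty,+\infty)$, together with Lemma \ref{nuse3} to guarantee that the constant multiplying $f_{n}(L)$ after integration is nonzero, so that the cancellation implied by $F_{n}\to 0$ transfers cleanly to the boundary traces.
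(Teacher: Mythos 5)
Your proposal is correct and follows the same overall strategy as the paper: a normalized approximate-spectrum sequence $U_{n}$ with $(i\lambda I-\mathcal{A})U_{n}\to 0$, dissipativity (Lemma \ref{lem2.1}) to force $\phi_{1}^{n},\phi_{2}^{n}\to 0$ in the weighted norm, and the boundary relations \eqref{2.501}--\eqref{2.502} plus the $\phi_{i}$-equations to kill the traces at $x=L$, ending in a contradiction with $\|U_{n}\|_{\mathcal{H}}=1$. There are two execution differences. First, the set-up: the paper runs the standard connectedness argument on $\mathcal{M}=\{\beta>0:(-i\beta,i\beta)\subset\rho(\mathcal{A})\}$ and derives the approximate sequence from the blow-up of the resolvent at $\sup\mathcal{M}$, whereas you invoke directly that a spectral point on $i\mathbb{R}$ lies on the boundary of $\sigma(\mathcal{A})$ (the spectrum being confined to the closed left half-plane) and hence in the approximate point spectrum; both are legitimate, and yours is slightly more economical. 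Second, the closing step: the paper concludes via its multiplier estimate Lemma \ref{lem4.4}, $\mathcal{N}^{2}\leq C(I_{V}+I_{P}+\|F\|_{\mathcal{H}}^{2})$, obtained from the identity of Lemma \ref{lem3.3} with $q(x)=x$, while you eliminate $f_{n},g_{n}$ to get a second-order ODE system in $(V_{n},P_{n})$ with vanishing Cauchy data at $x=L$ and vanishing forcing, and apply continuous dependence (the principal-part matrix being invertible since $\alpha\beta-\gamma^{2}\beta^{2}=\alpha_{1}\beta>0$). These are two faces of the same observability-from-$x=L$ phenomenon; the paper's route reuses a lemma it needs anyway for the polynomial decay in Theorem \ref{thm4.5}, while yours is self-contained but $\lambda$-dependent (harmless here, since $\lambda$ is fixed in the contradiction argument). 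Your extraction of $f_{n}(L),g_{n}(L)\to 0$ by testing the $\phi_{i}^{n}$-equations against a suitable weight is actually spelled out more carefully than in the paper, where this step is asserted with minimal detail; the key point you correctly identify is that the resulting coefficient $\int_{-\infty}^{+\infty}\mu^{2}(\xi)(i\lambda+\xi^{2}+\eta)^{-1}d\xi$ has strictly positive real part and is therefore nonzero.
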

\begin{proof}
In Section 2, we have proved that $0\in \rho(\mathcal{A})$, then we know that the set
\begin{align*}
\mathcal{M}=\left\{\beta>0:(-i\beta,i\beta)\subset \rho (\mathcal{A})\right\}\neq \emptyset.
\end{align*}
If $\sup\limits_{\beta>0}{\mathcal{M}}=+\infty$, the conclusion is obviously correct. Next, we will consider $\sup \limits_{\beta>0}{\mathcal{M}}< +\infty$ by using reduction to absurdity. Assume that there exists $\lambda>0$ such that $\sup \limits_{\beta>0}{\mathcal{M}}=\lambda <+\infty$. We can clearly find that $\lambda\notin \mathcal{M}$. Therefore, there exist $\lambda_{n} \in \mathcal{M}$ and $\overline{F}_{n}\in \mathcal{H}$ with $\|\overline{F}_{n}\|=1$ such that
\begin{align*}
\left\|(i\lambda_{n} I-\mathcal{A})^{-1}\overline{F}_{n}\right\|_{\mathcal{H}}\rightarrow \infty.
\end{align*}
Let $ \overline{U}_{n}=(i\lambda_{n}I-\mathcal{A})^{-1}\overline{F}_{n}$, which implies that $i\lambda_{n}\overline{U}_{n}-\mathcal{A}\overline{U}_{n}=\overline{F}_{n}$. Define $U_{n}=\frac{\overline{U}_{n}}{\|(i\lambda _{n}-\mathcal{A})^{-1}\overline{F}_{n}\|_{\mathcal{H}}}$, then $U_{n}$ satisfies
\begin{align*}
i\lambda_{n}U_{n}-\mathcal{A}U_{n}=F_{n},
\end{align*}
where $F_{n}=\frac{\overline{F}_{n}}{\|(i\lambda _{n}-\mathcal{A})^{-1}\overline{F}_{n}\|_{\mathcal{H}}}$. Since $\|\overline{F}_{n}\|_{\mathcal{H}}=1$ and $\|(i\lambda_n I -\mathcal{A})^{-1}\overline{F}_{n}\|_{\mathcal{H}}\rightarrow \infty$, we have $F_{n}\rightarrow 0$. Finding inner product with $U_{n}$ on $\mathcal{H}$, we have
\begin{align*}
i\lambda_{n}\|U_{n}\|^{2}_{\mathcal{H}}-\langle \mathcal{A}U_{n},U_{n}\rangle _{\mathcal{H}}=\langle F_{n},U_{n}\rangle _{\mathcal{H}}.
\end{align*}
By taking the real part and using the fact of $F_{n}\rightarrow 0$, we obtain
\begin{align*}
-Re\langle \mathcal{A}U_{n},U_{n}\rangle _\mathcal{H}=Re\langle F_{n},U_{n}\rangle _{\mathcal{H}}\rightarrow 0,
\end{align*}
that is
\begin{align*}
\int^{\infty}_{-\infty}(\xi^{2}+\eta)\left(l_{1}|\phi_{1,n}|^{2}+l_{2}|\phi_{2,n}|^{2}\right)d\xi\rightarrow 0.
\end{align*}
 By using \eqref{2.301} and \eqref{2.401}, we have  $f_{n}(L),g_{n}(L)\rightarrow 0$. Using the fact that $\phi_{1,n}, \phi_{2,n}\rightarrow 0$ and  \eqref{2.501}, \eqref{2.502}, we then obtain $V_{x,n}(L,t),P_{x,n}(L,t)\rightarrow 0$, which concludes that $I_{V}+I_{P}\rightarrow 0$. Further, with the help of the fact that $F_{n}, \phi_{1,n}, \phi_{2,n}\rightarrow 0$ and lemma \ref{lem4.4}, we have $U_{n}\rightarrow 0$.

There is a contradiction between $U_{n}\rightarrow 0$ and $\|U_{n}\|_{\mathcal{H}}=1$. Therefore, by using reduction to absurdity, the proof is completed.
\end{proof}

Now, we will use the result of Borichev and Tomilov theorem to show the polynomial stability for piezoelectric beams without thermal effects.

\begin{theorem}\label{thm4.5}
The piezoelectric system \eqref{2.3}-\eqref{2.7} with tip body decays polynomially as
\begin{align*}
\|{U}(t)\|_{\mathcal{H}}\leq \|{U}_{0}\|_{\mathcal{D}(\mathcal{A})}\frac{C}{{t}^{\frac{1}{1-a}}}.
\end{align*}
\end{theorem}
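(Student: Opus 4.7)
The plan is to invoke the Borichev--Tomilov theorem (Theorem \ref{thm4.1}) with exponent $k = 1-a$: combined with $i\mathbb{R} \subset \rho(\mathcal{A})$ already proved in Theorem \ref{iR}, it suffices to establish the resolvent bound
\begin{equation*}
\bigl\|(i\lambda I - \mathcal{A})^{-1}\bigr\|_{\mathcal{L}(\mathcal{H})} = O\bigl(|\lambda|^{1-a}\bigr), \qquad |\lambda| \to \infty,
\end{equation*}
which via Theorem \ref{thm4.1} yields $\|\mathcal{S}(t)\mathcal{A}^{-1}\|_{\mathcal{L}(\mathcal{H})} = O(t^{-1/(1-a)})$, i.e.\ the stated decay rate on $\mathcal{D}(\mathcal{A})$.

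I would argue by contradiction. Suppose the resolvent bound fails: then there exist real sequences $\lambda_n$ with $|\lambda_n| \to \infty$ and elements $U_n = (V_n, f_n, \phi_{1,n}, P_n, g_n, \phi_{2,n}) \in \mathcal{D}(\mathcal{A})$ with $\|U_n\|_\mathcal{H} = 1$ such that $F_n := (i\lambda_n I - \mathcal{A})U_n$ satisfies $|\lambda_n|^{1-a}\|F_n\|_\mathcal{H} \to 0$. Taking the real part of $\langle i\lambda_n U_n - \mathcal{A}U_n, U_n\rangle_\mathcal{H} = \langle F_n, U_n\rangle_\mathcal{H}$ and applying Lemma \ref{lem2.1} immediately gives the dissipation estimate
\begin{equation*}
D_n := \int_{-\infty}^{+\infty}(\xi^2+\eta)\bigl(l_1|\phi_{1,n}|^2 + l_2|\phi_{2,n}|^2\bigr) d\xi = o\bigl(|\lambda_n|^{-(1-a)}\bigr).
\end{equation*}

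The core of the argument is then converting this dissipation into decay of the boundary traces. Using the resolvent formulas $\phi_{1,n} = (f_n(L)\mu(\xi) + F_{3,n})/(\xi^2+\eta+i\lambda_n)$ from \eqref{4.13}, and analogously for $\phi_{2,n}$ from \eqref{4.16}, I would substitute into the boundary conditions \eqref{2.501}--\eqref{2.502} and apply Cauchy--Schwarz with the bound $\int \mu^2(\xi)/(\xi^2+\eta) d\xi < \infty$ (Lemma \ref{nuse1} at $\lambda=0$) to obtain $|\alpha V_{x,n}(L) - \gamma\beta P_{x,n}(L)|$ and $|\beta P_{x,n}(L) - \gamma\beta V_{x,n}(L)|$ of order $O(\sqrt{D_n}) \to 0$; since the associated $2\times 2$ matrix has determinant $\alpha_1\beta > 0$, both $V_{x,n}(L)$ and $P_{x,n}(L)$ tend to zero. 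To control $f_n(L)$ and $g_n(L)$ I intend to multiply the identity $(\xi^2+\eta+i\lambda_n)\phi_{1,n} = f_n(L)\mu + F_{3,n}$ through and integrate $(\xi^2+\eta)|\phi_{1,n}|^2$, using the pointwise bound $(\xi^2+\eta)/|\xi^2+\eta+i\lambda_n|^2 \le 1/(2|\lambda_n|)$ for the $F_{3,n}$ remainder together with the scaling-based asymptotic $\int \mu^2(\xi)(\xi^2+\eta)/|\xi^2+\eta+i\lambda_n|^2\, d\xi \sim c\,|\lambda_n|^{a-1}$ (which can be extracted from the substitution $\xi = |\lambda_n|^{1/2} u$, in the spirit of Lemma \ref{nuse3}). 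Rearranging yields $|f_n(L)|^2 \le C|\lambda_n|^{1-a}\bigl(D_n + |\lambda_n|^{-1}\|F_n\|_\mathcal{H}^2\bigr) \to 0$, and the same for $g_n(L)$.

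With these trace estimates I would invoke Lemma \ref{lem4.4} (with $q(x)=x$) to conclude $\mathcal{N}_n^2 = O(I_{V,n} + I_{P,n} + \|F_n\|_\mathcal{H}^2) \to 0$, and combine with $\int|\phi_{i,n}|^2 d\xi \le D_n/(l_i\eta) \to 0$ to deduce $\|U_n\|_\mathcal{H} \to 0$, contradicting $\|U_n\|_\mathcal{H} = 1$. The main technical obstacle is the bound on $|f_n(L)|^2$: a naive application of Cauchy--Schwarz to $\int \mu\phi_{1,n}\,d\xi$ loses a power of $|\lambda_n|^{(1-a)/2}$ and does not close the argument; extracting the sharp power $|\lambda_n|^{a-1}$ requires working with the quadratic identity for $\phi_{1,n}$ and the rescaling computation indicated above, and this is where the specific exponent $k = 1-a$ (and hence the decay rate $t^{-1/(1-a)}$) is pinned down.
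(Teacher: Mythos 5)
Your proposal is correct and follows essentially the same route as the paper: both reduce the theorem to the resolvent bound $\|(i\lambda I-\mathcal{A})^{-1}\|=O(|\lambda|^{1-a})$ via Theorem \ref{thm4.1} and Theorem \ref{iR}, control the boundary traces $f(L)$, $g(L)$ (through the decisive $|\lambda|^{a-1}$ asymptotics of the $\mu^2$-weighted integrals) and $V_x(L)$, $(\gamma V_x-P_x)(L)$ (from the boundary conditions \eqref{2.501}--\eqref{2.502} and the dissipation), and then close the estimate with Lemma \ref{lem4.4}. The only differences are presentational: you package the estimate as a contradiction argument with normalized sequences and extract the $|\lambda|^{a-1}$ factor from a quadratic lower bound on $\int(\xi^{2}+\eta)|\phi_{1}|^{2}\,d\xi$ via rescaling, whereas the paper argues by a direct chain of inequalities and obtains the same trace bound by multiplying \eqref{4.16} by $|\xi|/(\lambda+\xi^{2}+\eta)^{2}$ and invoking Lemma \ref{nuse3}.
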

\begin{proof}
To obtain the above inequality, we first need to consider the relationship between $|f(L)|^{2},|g(L)|^{2}$ and $\|F\|$. Multiplying \eqref{4.16} by $\frac{|\xi|}{\left({\lambda+\xi^{2}+\eta}\right)^2}$, we can obtain
\begin{align*}
g(L)\frac{\mu(\xi)|\xi|}{\left(\lambda+\xi^{2}+\eta\right)^2}=-\frac{F_{6}(\xi)|\xi|}{\left(\lambda+\xi^{2}+\eta\right)^2}+\frac{i\lambda|\xi|}{\left(\lambda+\xi^{2}+\eta\right)^2}\phi_{2}
+\frac{(\xi^{2}+\eta)|\xi|}{\left(\lambda+\xi^{2}+\eta\right)^2}\phi_{2}.
\end{align*}
Integrating over $(-\infty,+\infty)$ with respect to the variable $\xi$, and using the fact that $\mu(\xi)=|\xi|^{(2a-1)/2}$, we have
\begin{align*}
|g(L)|\int^{+\infty}_{-\infty}\frac{|\xi|^{a+\frac{1}{2}}}{(\lambda+\xi^2+\eta)^2}d\xi\leq \int^{+\infty}_{-\infty}\frac{|\phi_2(\xi)||\xi|}{|\lambda+\xi^2+\eta|}d\xi+\int^{+\infty}_{-\infty}\frac{|F_6(\xi)||\xi|}{(\lambda+\xi^2+\eta)^2}d\xi.
\end{align*}
Here, we use the Cauchy-Shariwz inequality, then the above inequation can be rewritten as
\begin{align*}
|g(L)|\int^{+\infty}_{-\infty}\frac{|\xi|^{a+\frac{1}{2}}}{(\lambda+\xi^2+\eta)^2}d\xi\leq & \left(\int^{+\infty}_{-\infty}\frac{1}{(\lambda+\xi^2+\eta)^2}d\xi\right)^{\frac{1}{2}} \left(\int^{+\infty}_{-\infty}{|\phi_2(\xi)|^2\xi^2}d\xi\right)^\frac{1}{2}\\
&+\left(\int^{+\infty}_{-\infty}\frac{\xi^2}{(\lambda+\xi^2+\eta)^4}d\xi\right)^{\frac{1}{2}}
\left(\int^{+\infty}_{-\infty}|F_6(\xi)|^2d\xi\right)^\frac{1}{2}.
\end{align*}
Then, using Lemma \ref{nuse3}, we get
\begin{align*}
|g(L)|\left(c_1(\lambda+\eta)^{\frac{a}{2}-\frac{5}{4}}\right)\leq \sqrt{\frac{\pi}{2}}\frac{1}{(\lambda+\eta)^{\frac{3}{4}}} \left(\int^{+\infty}_{-\infty}{|\phi_2(\xi)|^2\xi^2}d\xi\right)^\frac{1}{2}+ \frac{\sqrt{\pi}}{4}\frac{1}{(\lambda+\eta)^{\frac{5}{4}}}\left(\int^{+\infty}_{-\infty}|F_6(\xi)|^2d\xi\right)^\frac{1}{2},
\end{align*}
which implies that
\begin{align*}
|g(L)|&\leq
\sqrt{\frac{\pi}{2}}\frac{1}{c_1(\lambda+\eta)^{\frac{a-1}{2}}} \left(\int^{+\infty}_{-\infty}{|\phi_2(\xi)|^2\xi^2}d\xi\right)^\frac{1}{2}+ \frac{\sqrt{\pi}}{4}\frac{1}{c_1(\lambda+\eta)^{\frac{a}{2}}}\left(\int^{+\infty}_{-\infty}|F_6(\xi)|^2d\xi\right)^\frac{1}{2}\\
&\leq C|\lambda|^{\frac{1-a}{2}}\|U\|^{\frac{1}{2}}_{\mathcal{H}}\|F\|^{\frac{1}{2}}_{\mathcal{H}}+C|\lambda|^{-\frac{a}{2}}\|F\|_{\mathcal{H}}.
\end{align*}
Now, we can get that
\begin{align*}
|g(L)|^{2}\leq C|\lambda|^{-a}\|F\|^{2}_{\mathcal{H}}+C|\lambda|^{1-a}\|U\|_{\mathcal{H}}\|F\|_{\mathcal{H}}.
\end{align*}
Similarly, we can get $|f(L)|^{2}\leq C|\lambda|^{-a}\|F\|^{2}_{\mathcal{H}}+C|\lambda|^{1-a}\|U\|_{\mathcal{H}}\|F\|_{\mathcal{H}}$.
On the other hand, we need to consider the relationship between $\alpha_{1}|V_{x}(L)|^{2}, \beta|(\gamma V_{x}-P_{x})(L)|^{2}$ and $\|F\|_{\mathcal{H}}$.
Thanks to \eqref{2.501} and \eqref{2.502}, it is straightforward to verify that
\begin{align*}
\alpha_{1}V_{x}(L)&=-l_{1}\frac{\sin(a\pi)}{\pi}\int^{\infty}_{-\infty}\mu(\xi)\phi_{1}(\xi,t)d\xi-\gamma l_{2}\frac{\sin(a\pi)}{\pi}\int^{\infty}_{-\infty}\mu(\xi)\phi_{2}(\xi,t)d\xi,\\
\beta(\gamma V_{x}-P_{x})(L)&=-l_{2}\frac{\sin(a\pi)}{\pi}\int^{\infty}_{-\infty}\mu(\xi)\phi_{2}(\xi,t)d\xi.
\end{align*}
Then, by employing Lemma \ref{nuse2} and \eqref{4.1+}, we obtain
\begin{align*}
\alpha_{1}|V_{x}(L)|^{2}\leq C\|U\|_{\mathcal{H}}\|F\|_{\mathcal{H}},\\
\beta|(\gamma V_{x}-P_{x})(L)|^{2}\leq C\|U\|_{\mathcal{H}}\|F\|_{\mathcal{H}}.
\end{align*}
Now, by recalling the definition of norm in $\mathcal{H}$ together with the result of Lemma \ref{lem4.4}, we arrive at
\begin{align*}
\|{U}\|^{2}_{\mathcal{H}}\leq C|\lambda|^{-a}\|F\|^{2}_{\mathcal{H}}+C|\lambda|^{1-a}\|U\|_{\mathcal{H}}\|F\|_{\mathcal{H}}
\end{align*}
for $|\lambda|>1$, which implies
\begin{align*}
\|{U}\|^{2}_{\mathcal{H}}\leq C|\lambda|^{2-2a}\|F\|^{2}_{\mathcal{H}}.
\end{align*}
Finally, we can get the result of polynomial stability  by using Theorem \ref{thm4.1}.
\end{proof}

\section{Well-posedness for piezoelectric beams with thermal effects}
In the previous sections, we get the fact that the system is not exponentially stable when the magnetic effect is in fractional form and the temperature is not being considered. In this section, we can obtain that the system is exponentially stable if we consider the (dissipative) effect of heat conduction under the Fourier law in piezoelectric beam system. Using Theorem \ref{thm2.1}, system \eqref{problem1.2}-\eqref{1.6} can be rewritten as the next augmented model
\begin{align}
&\rho V_{tt}-\alpha V_{xx}+\gamma\beta P_{xx}+\delta\theta_{x}=0,&&(x,t)\in  (0,L)\times (0,T), \label{5.11}\\
&\partial_{t}\phi_{1}(\xi,t)+\left(\xi^{2}+\eta\right)\phi_{1}(\xi,t)-V_{t}(L,t)\mu(\xi)=0,&& (\xi,t)\in (-\infty,+\infty)\times (0,+\infty),\label{5.12}\\
&\mu P_{tt}-\beta P_{xx}+\gamma\beta V_{xx}=0,  && (x,t)\in(0,L)\times (0,T),\label{5.13}\\
&\partial_{t}\phi_{2}(\xi,t)+\left(\xi^{2}+\eta\right)\phi_{2}(\xi,t)-P_{t}(L,t)\mu(\xi)=0,&&(\xi,t)\in (-\infty,+\infty)\times (0,+\infty),\label{5.14}\\
&c\theta_{x}-\kappa\theta_{xx}+\delta V_{xt}=0, &&(x,t)\in(0,L)\times (0,T),
\end{align}
with the boundary conditions
\begin{align}
&V(0,t)=P(0,t)=\theta_x(0,t)=\theta(L,t)=0,& t\in (0,T),\label{bc1}\\
&\alpha V_{x}(L,t)-\gamma\beta P_{x}(L,t)=-l_{1}\frac{\sin(a\pi)}{\pi}\int^{\infty}_{-\infty}\mu(\xi)\phi_{1}(\xi,t)d\xi,& t\in (0,T),\label{5.15}\\
&\beta P_{x}(L,t)-\gamma\beta V_{x}(L,t)=-l_{2}\frac{\sin(a\pi)}{\pi}\int^{\infty}_{-\infty}\mu(\xi)\phi_{2}(\xi,t)d\xi,& t\in (0,T),\label{5.16}
\end{align}
and the initial conditions
\begin{align}\label{5.17}
\left(V(x,0), V_{t}(x,0), \phi_{1}(0), P(x,0), P_{t}(x,0),\phi_{2}(0),\theta(x,0)\right)=\left(V_{0},V_{1},\phi_{01},P_{0},P_{1},\phi_{02},\theta_{0}\right)\quad x \in (0,L).
\end{align}

Let us start by defining the phase spaces depending on each boundary condition in \eqref{1.5}
\begin{align*}
\mathcal{H}_{1}:=H^{1}_{*}(0,L)\times {L}^{2}(0,L)\times{L}^{2}(-\infty,+\infty)\times {{H}}^{1}_{*}(0,L)\times {{L}}^{2}(0,L)\times{L}^{2}(-\infty,+\infty)\times H^1_c(0,L),
\end{align*}
such that $ {{H}}^{1}_{*}(0,L)=\left\{f\in{{H}}^{1}(0,L):f(0)=0\right\}$, $H^1_c(0,L)=\left\{f\in{{H}}^{1}(0,L):f_x(0)=f(L)=0\right\}$, equipped with the inner product
\begin{align*}
\langle{U}_{1},{U}_{2}\rangle_{\mathcal{H}_{1}}=&\int^{L}_{0}\left[\rho f_{1}\overline{f}_{2}+\mu g_{1}\overline{g}_{2}+\alpha_{1}V_{1,x}\overline{V}_{2,x}+\beta(\gamma V_{1,x}-P_{1,x})\overline{(\gamma V_{2,x}-P_{2,x})}+c\theta_{1}\overline{\theta}_{2}\right]dx \\
&+\frac{\sin(a\pi)}{\pi}\int^{+\infty}_{-\infty}\left(l_{1}\phi_{1,1}\overline{\phi}_{1,2}+l_{2}\phi_{2,1}\overline{\phi}_{2,2}\right)dx,
\end{align*}
where ${{U}}_{i}=\left(V_{i},f_{i}, \phi_{1,i}, P_{i}, g_{i},\phi_{2,i},\theta_{i}\right)\in \mathcal{{H}}_{1},i=1,2$.
Set the vector function
\begin{align*}
{{U}}=\left(V,V_{t}, \phi_{1}, P, P_{t}, \phi_{2},\theta\right)^{T}.
\end{align*}
Transforming to a first-order system again, we can rewrite system \eqref{5.11}-\eqref{5.17} as
\begin{equation} \label{5.20}
\left\{
\begin{aligned}
&{{U}}_{t}={\mathcal{A}_{1}}{{U}},\\
&{{U}}(0)={{U}}_{0},
\end{aligned}
\right.
\end{equation}
where ${{U}}_{0}=\left(V_{0},V_{1}, \phi_{1,0}, P_{0}, P_{1}, \phi_{2,0},\theta_{0}\right)^{T}$ and the operator $\mathcal{A}_{1}:\mathcal{D}(\mathcal{A}_{1})\subset \mathcal{H}_{1}\rightarrow \mathcal{H}_{1}$ is given by
\begin{align*}
\mathcal{A}_{1}\begin{bmatrix} V \\ f \\ \phi_{1} \\ P \\ g\\ \phi_{2} \\ \theta \end{bmatrix}=
\begin{bmatrix}
   f \\
  \frac{\alpha}{\rho}V_{xx} -\frac{\gamma\beta}{\rho}P_{xx}-\frac{\delta}{\rho} \theta_{x} \\
  -\left(\xi^{2}+\eta\right)\phi_{1}(\xi,t)+f(L,t)\mu(\xi)\\
  g \\
  -\frac{\gamma\beta}{\mu}V_{xx} + \frac{\beta}{\mu}P_{xx} \\
  -\left(\xi^{2}+\eta\right)\phi_{2}(\xi,t)+g(L,t)\mu(\xi)\\
  \frac{k}{c}\theta_{xx}-\frac{\delta}{c} f_{x}
\end{bmatrix}
,
\end{align*}
where the domain of the operator $\mathcal{A}_{1}$ is given by
\begin{align*}
\mathcal{D}(\mathcal{A}_{1}):=\bigg\{&{{U}}\in \mathcal{H};  V,P \in {H}^{2}(0,L)\cap H^{1}_{*}(0,L), f,g\in H^{1}_{*}(0,L), \theta\in H^2(0,L)\cap H^{1}_c(0,L), \\&|\xi|\phi_{1},|\xi|\phi_{2}\in L^{2}(-\infty,+\infty),-\left(\xi^{2}+\eta\right)\phi_{1}(\xi,t)+f(L,t)\mu(\xi)\in L^{2}(-\infty,+\infty),\\
&-\left(\xi^{2}+\eta\right)\phi_{2}(\xi,t)+g(L,t)\mu(\xi)\in L^{2}(-\infty,+\infty)\bigg\}.
\end{align*}
with ${{U}}=\left(V,f, \phi_{1}, P, g, \phi_{2},\theta\right)$.

The energy of system \eqref{5.11}-\eqref{5.17} is given by
\begin{align}
E_{1}(t)=&\frac{1}{2}\int^{L}_{0}\left[\rho\left|V_{t}\right|^{2}+\alpha_{1}\left|V_{x}\right|^{2}+\mu\left|P_{t}\right|^{2}+\beta\left|\gamma V_{x}-P_{x}\right|^{2}+c|\theta|^{2}\right] dx\\
&+\frac{\sin(a\pi)}{2\pi}\int^{\infty}_{-\infty}\left(l_{1}|\phi_{1}|^{2}+l_{2}|\phi_{2}|^{2}\right)d\xi.
\end{align}
Multiplying \eqref{5.11}, \eqref{5.13}, \eqref{5.15} by $ V_{t}$ , $P_{t}$ and $\theta$ respectively and integrating on $(0,L)$,  multiplying \eqref{5.12}, \eqref{5.14} by $l_{1}\frac{\sin(a\pi)}{\pi}\phi_{1}$ and $l_{2}\frac{\sin(a\pi)}{\pi}\phi_{2}$ respectively and integrating on $\mathbb{R}$, we have
\begin{align}\label{5.21}
\frac{d}{dt}E_{1}(t)=-\frac{\sin(a\pi)}{\pi}\int^{\infty}_{-\infty}(\xi^{2}+\eta)\left(l_{1}|\phi_{1}|^{2}+l_{2}|\phi_{2}|^{2}\right)d\xi-k\int^{L}_{0}|\theta_{x}|^{2} dx.
\end{align}

\begin{theorem}\label{them5.1}
Let ${U}_{0}\in \mathcal{D}(\mathcal{A}_{1})$, there exists a unique solution ${U}(t)=\mathcal{S}_{\mathcal{A}_{1}}(t){U}_{0}$ of \eqref{5.20} such that
\begin{align*}
{U}\in{C}\left([0,\infty);\mathcal{D}(\mathcal{A}_{1})\right)\cap{C}^{1} \left([0,\infty); \mathcal{H}_{1}\right).
\end{align*}
\end{theorem}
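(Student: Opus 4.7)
The plan is to invoke the Lumer--Phillips theorem once again, arguing in complete parallel with the proof of Theorem \ref{them2.2}. Three items need to be checked for the operator $\mathcal{A}_1$: density of its domain, dissipativity, and the range (or equivalently, resolvent) condition $0\in\rho(\mathcal{A}_1)$.

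First, density of $\mathcal{D}(\mathcal{A}_1)$ in $\mathcal{H}_1$ follows from the fact that smooth functions supported in the interior (or satisfying the boundary conditions) are dense in each factor. For dissipativity, I would take any $U=(V,f,\phi_1,P,g,\phi_2,\theta)\in\mathcal{D}(\mathcal{A}_1)$ and compute $\textrm{Re}\langle\mathcal{A}_1 U,U\rangle_{\mathcal{H}_1}$ using the explicit form of the inner product. After integrating by parts in the $V$- and $P$-components, the boundary terms at $x=L$ are absorbed by the $\phi_i$-coupling in the same way as in Lemma \ref{lem2.1}; the new temperature term $\frac{\kappa}{c}\theta_{xx}-\frac{\delta}{c}f_x$ produces, after integration by parts and use of $\theta_x(0)=\theta(L)=0$, a clean $-\kappa\int_0^L|\theta_x|^2\,dx$ contribution (with the $\delta f_x\theta$ and $\delta\theta_x f$ terms cancelling). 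The result is exactly formula \eqref{5.21}, which is $\leq 0$, so $\mathcal{A}_1$ is dissipative.

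The main work is showing $0\in\rho(\mathcal{A}_1)$. Given $F=(F_1,\dots,F_7)\in\mathcal{H}_1$, I would solve $-\mathcal{A}_1 U=F$ component-by-component. The first, third, fourth and sixth equations immediately yield
\begin{align*}
f=-F_1,\qquad g=-F_4,\qquad \phi_1=\frac{-F_1(L)\mu(\xi)+F_3}{\xi^2+\eta},\qquad \phi_2=\frac{-F_4(L)\mu(\xi)+F_6}{\xi^2+\eta},
\end{align*}
and Lemma \ref{nuse1} guarantees $\phi_1,\phi_2\in L^2(-\infty,+\infty)$ exactly as before. It then remains to solve the coupled elliptic system for $(V,P,\theta)$:
\begin{align*}
&\alpha V_{xx}-\gamma\beta P_{xx}-\delta\theta_x=-\rho F_2,\\
&\beta P_{xx}-\gamma\beta V_{xx}=-\mu F_5,\\
&\kappa\theta_{xx}=cF_7-\delta F_{1,x},
\end{align*}
subject to $V(0)=P(0)=0$, $\theta_x(0)=\theta(L)=0$, and the two inhomogeneous Neumann-type boundary conditions at $x=L$ coming from \eqref{5.15}--\eqref{5.16} (with the known $\phi_i$ plugged in).

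I would handle this by a single Lax--Milgram argument on $H^1_*(0,L)\times H^1_*(0,L)\times H^1_c(0,L)$, using the bilinear form
\begin{align*}
a((V,P,\theta),(\tilde V,\tilde P,\tilde\theta))=\int_0^L\left[\alpha_1 V_x\overline{\tilde V}_x+\beta(\gamma V_x-P_x)\overline{(\gamma\tilde V_x-\tilde P_x)}+\kappa\theta_x\overline{\tilde\theta}_x+\delta\theta\overline{\tilde V}_x-\delta V_x\overline{\tilde\theta}\right]dx,
\end{align*}
which corresponds to the weak form of the system. Boundedness is immediate; coercivity follows because the skew-symmetric coupling $\delta\theta\overline{\tilde V}_x-\delta V_x\overline{\tilde\theta}$ vanishes on the diagonal, leaving
\begin{align*}
\textrm{Re}\,a((V,P,\theta),(V,P,\theta))=\alpha_1\|V_x\|^2+\beta\|\gamma V_x-P_x\|^2+\kappa\|\theta_x\|^2,
\end{align*}
which is equivalent to the $H^1$-norm on the stated space via Poincar\'e's inequality (using $V(0)=P(0)=\theta(L)=0$). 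The main obstacle is organizing the right-hand side (especially the boundary contribution at $x=L$) into a bounded antilinear functional and checking that the resulting weak solution is actually regular enough to lie in $\mathcal{D}(\mathcal{A}_1)$; this is done by standard elliptic regularity once the weak solution is in hand. Having produced $U\in\mathcal{D}(\mathcal{A}_1)$ with $-\mathcal{A}_1 U=F$, we conclude $0\in\rho(\mathcal{A}_1)$, and the Lumer--Phillips theorem delivers the $C_0$-semigroup $\{\mathcal{S}_{\mathcal{A}_1}(t)\}_{t\geq 0}$, hence the stated well-posedness.
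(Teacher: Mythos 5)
Your overall strategy --- Lumer--Phillips together with a Lax--Milgram argument for the range condition --- is the same as the paper's, and your treatment of density, of dissipativity (the $\delta$-coupling terms cancel via $\mathrm{Re}\,[f\overline{\theta}]_0^L=0$ using $\theta(L)=f(0)=0$, leaving the clean $-\kappa\int_0^L|\theta_x|^2dx$), and of the components $f,g,\phi_1,\phi_2$ all match the paper. The gap is in your Lax--Milgram setup for $(V,P,\theta)$. In the stationary system $-\mathcal{A}_1U=F$ the heat equation is $-\kappa\theta_{xx}+\delta f_x=cF_7$ with $f=-F_1$ already determined, i.e.\ $-\kappa\theta_{xx}=cF_7+\delta F_{1,x}$; it contains no $V$ whatsoever (you wrote it in exactly this form yourself). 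Hence the genuine weak formulation has the coupling term $\delta\theta_x\overline{\tilde V}$ in the first equation but \emph{no} partner $-\delta V_x\overline{\tilde\theta}$ in the third. The skew-symmetric term you inserted into $a(\cdot,\cdot)$ so that the coupling vanishes on the diagonal is therefore not part of the system: keeping it, Lax--Milgram produces a solution of a different problem (one with a spurious $-\delta V_x$ in the heat equation); dropping it, $\mathrm{Re}\,a(U,U)$ retains the uncontrolled cross term $\delta\,\mathrm{Re}\int_0^L\theta\overline{V}_x\,dx$ and coercivity only holds under an unwanted smallness condition on $\delta$ relative to $\sqrt{\alpha_1\kappa}$.

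The repair is what the paper does, and it is simpler than your coupled form: since the $\theta$-equation is fully decoupled, integrate it twice using $\theta_x(0)=\theta(L)=0$ to obtain $\theta$ explicitly,
\begin{align*}
\theta(x)=\frac{1}{\kappa}\int_x^L\int_0^y\left(\delta F_{1,x}+cF_7\right)(s)\,ds\,dy,
\end{align*}
then move the now-known term $\delta\theta_x$ to the right-hand side of the $V$-equation and run Lax--Milgram only on the $(V,P)$ system in $H^1_*(0,L)\times H^1_*(0,L)$, exactly as in the proof of Theorem \ref{them2.2}. With that replacement (and your elliptic-regularity remark to place the solution in $\mathcal{D}(\mathcal{A}_1)$), the rest of your argument goes through.
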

\begin{proof}
It relies on Lumer-Phillips theorem, which ensures that the operator $\mathcal{A}_{1}$ is the infinitesimal generator of a $C_{0}-$ semigroup $\{\mathcal{S}_{\mathcal{A}_{1}}(t)\}_{t\geq0}$. From \eqref{5.21}, we have shown that the operator $\mathcal{A}_{1}$ is dissipative, closed and dense.
So we need to prove that $0\in \rho(\mathcal{A})$. For any ${F}=\left(F_{1},F_{2},F_{3},F_{4},F_{5},F_{6},F_{7}\right)\in \mathcal{H}_{1}$, we will prove there exists a ${{U}}=\left(V,f, \phi_{1}, P, g, \phi_{2},\theta\right)$ such that
\begin{align*}
-\mathcal{A}{U}={F}.
\end{align*}
Equivalently, we shall consider the existence of unique solution of system
\begin{align}
-f&=F_{1}\label{5.31}\\
 -{\alpha}V_{xx}+{\gamma\beta}P_{xx}+\delta\theta_{x}&={\rho}F_{2}\label{5.32}\\
\left(\xi^{2}+\eta\right)\phi_{1}(\xi,t)-f(L)\mu(\xi)&=F_{3}\label{5.33}\\
-g&=F_{4}\label{5.34}\\
-{\beta}P_{xx}+{\gamma\beta}V_{xx}&={\mu}F_{5}\label{5.35}\\
\left(\xi^{2}+\eta\right)\phi_{2}(\xi,t)-g(L)\mu(\xi)&=F_{6}\label{5.36}\\
-k\theta_{xx}+\delta f_{x}&=cF_{7}.\label{5.37}
\end{align}
That is, by \eqref{5.31} and \eqref{5.34} it follows that $f,g\in H^{1}_{*}(0,L)$ and
\begin{align}\label{2.13}
f=-F_{1},\quad & \quad g=-F_{4}.
\end{align}
Then combining \eqref{2.13} with \eqref{5.33} and \eqref{5.36}, we have that
\begin{align}\label{2.130}
\phi_{1}=\frac{-F_{1}(L)\mu(\xi)+F_{3}}{\xi^{2}+\eta},\quad\quad\phi_{2}=\frac{-F_{4}(L)\mu(\xi)+F_{6}}{\xi^{2}+\eta}.
\end{align}
Thanks to \eqref{2.13}, \eqref{5.37} and the boundary condition \eqref{bc1}, we can get that
\begin{align*}
\theta=\frac{1}{k}\int^{L}_{x}\int^{y}_{0}\left(\delta F_{1,x}+cF_{7}\right)(\xi)d\xi dy.
\end{align*}
By employing Lemma\ref{nuse1}, we conclude that $\phi_{i}\in L^{2}(R)$. Then, we need to prove the existence of unique solution of the system
\begin{equation}\label{2.14}
\left\{
\begin{aligned}
& {\alpha}V_{xx}-{\gamma\beta}P_{xx}=-\rho F_{2}+\frac{\delta}{k}\int^{x}_{0}\left(-\delta F_{1,x}-cF_{7}\right)(\xi)d\xi,\\
&\beta P_{xx}-\gamma\beta V_{xx}=-\mu F_{5} ,\\
&\alpha V_{x}(L,t)-\gamma\beta P_{x}(L,t)=-l_{1}\frac{\sin(a\pi)}{\pi}\int^{\infty}_{-\infty}\mu(\xi)\frac{-F_{1}(L)\mu(\xi)+F_{3}(\xi)}{\xi^{2}+\eta}d\xi,\\
&\beta P_{x}(L,t)-\gamma\beta V_{x}(L,t)=-l_{2}\frac{\sin(a\pi)}{\pi}\int^{\infty}_{-\infty}\mu(\xi)\frac{-F_{4}(L)\mu(\xi)+F_{6}(\xi)}{\xi^{2}+\eta}d\xi.
\end{aligned}
\right.
\end{equation}
By using the Lax-Milgram theorem, we know that there exists a solution $(V,P,\theta)\in {H}^{1}_{*}(0,L)\times {H}^{1}_{*}(0,L)\times H^1_c(0,L)$. Combining \eqref{2.14}, \eqref{2.13} and \eqref{2.130}, we obtain that $0\in \rho(\mathcal{A})$. Thus, the proof of the theorem is completed.
\end{proof}

\section{Exponential stability for piezoelectric beams with thermal effects}
In this section, we will show that the system of piezoelectric beams with Fourier's law is exponentially stable by constructing four functionals.
Before we begin the proof, we give a useful inequality. This inequality is proved by Lemma \ref{nuse1} and Young's inequality, which states the relationship between the dissipation term $l_{i}\frac{\sin(a\pi)}{\pi}\int^{\infty}_{-\infty}\mu(\xi)\phi_{i}(\xi,t)d\xi$  at the boundary $x=L$ and the term $\frac{\sin(a\pi)}{\pi}\int^{\infty}_{-\infty}(\xi^{2}+\eta)\left(l_{1}|\phi_{1}|^{2}+l_{2}|\phi_{2}|^{2}\right)d\xi$ in the energy derivative \eqref{5.21}.

\begin{lemma}
Let $\mu(\xi), \phi_{i}(\xi,t), a, \eta $ be the functions mentioned above, and $l_i\geq 0$. Then
\begin{align}\label{nuse2}
\left[l_{i}\frac{\sin(a\pi)}{\pi}\int^{\infty}_{-\infty}\mu(\xi)\phi_{i}(\xi,t)d\xi\right]^{2}\leq
M\frac{\sin(a\pi)}{\pi}\int^{\infty}_{-\infty}(\xi^{2}+\eta)\left(l_{1}|\phi_{1}|^{2}+l_{2}|\phi_{2}|^{2}\right)d\xi.
\end{align}
where $M$ is a constant.
\end{lemma}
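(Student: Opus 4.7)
The plan is to apply the Cauchy--Schwarz inequality to split off the dissipation weight $\xi^2+\eta$, and then to use Lemma \ref{nuse1} to evaluate the resulting weighted integral of $\mu^2$.

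First, I would write the integrand as a product of two factors,
\[
\mu(\xi)\phi_{i}(\xi,t)=\frac{\mu(\xi)}{\sqrt{\xi^{2}+\eta}}\cdot\sqrt{\xi^{2}+\eta}\,\phi_{i}(\xi,t),
\]
and apply the Cauchy--Schwarz inequality to obtain
\[
\left[\int_{-\infty}^{\infty}\mu(\xi)\phi_{i}(\xi,t)\,d\xi\right]^{2}\le\left[\int_{-\infty}^{\infty}\frac{\mu^{2}(\xi)}{\xi^{2}+\eta}\,d\xi\right]\left[\int_{-\infty}^{\infty}(\xi^{2}+\eta)|\phi_{i}(\xi,t)|^{2}\,d\xi\right].
\]
Next, I would invoke Lemma \ref{nuse1} with the choice $\lambda=0$, which is admissible because $\operatorname{Re}\lambda+\eta=\eta>0$ places $\lambda$ in the set $E$. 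This gives the explicit value
\[
\int_{-\infty}^{\infty}\frac{\mu^{2}(\xi)}{\xi^{2}+\eta}\,d\xi=\frac{\pi}{\sin(a\pi)}\,\eta^{a-1}.
\]

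Combining these two steps, I obtain
\[
\left[\int_{-\infty}^{\infty}\mu(\xi)\phi_{i}(\xi,t)\,d\xi\right]^{2}\le\frac{\pi\,\eta^{a-1}}{\sin(a\pi)}\int_{-\infty}^{\infty}(\xi^{2}+\eta)|\phi_{i}(\xi,t)|^{2}\,d\xi.
\]
Multiplying by $l_{i}^{2}\sin^{2}(a\pi)/\pi^{2}$ yields
\[
\left[l_{i}\frac{\sin(a\pi)}{\pi}\int_{-\infty}^{\infty}\mu(\xi)\phi_{i}(\xi,t)\,d\xi\right]^{2}\le l_{i}^{2}\,\eta^{a-1}\,\frac{\sin(a\pi)}{\pi}\int_{-\infty}^{\infty}(\xi^{2}+\eta)|\phi_{i}(\xi,t)|^{2}\,d\xi.
\]
Finally, since the right-hand side of the target inequality contains both $l_{1}|\phi_{1}|^{2}$ and $l_{2}|\phi_{2}|^{2}$ and both $l_{1},l_{2}\ge 0$, I can use the trivial bound $l_{i}|\phi_{i}(\xi,t)|^{2}\le l_{1}|\phi_{1}(\xi,t)|^{2}+l_{2}|\phi_{2}(\xi,t)|^{2}$ to absorb the $l_{i}$ factor and conclude with $M:=l_{i}\,\eta^{a-1}\max\{l_{1},l_{2}\}$ (or any equivalent positive constant depending only on $l_{1},l_{2},a,\eta$).

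There is essentially no hard step here; the entire argument is a Cauchy--Schwarz split followed by a direct application of the previously proved integral identity. The only minor subtlety is verifying that $\lambda=0$ is allowable in Lemma \ref{nuse1}, which follows immediately from $\eta>0$, and keeping the constant $M$ independent of $(\xi,t)$ and of the particular solution $\phi_{i}$.
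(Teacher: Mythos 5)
Your proof is correct and follows essentially the same route as the paper: a Cauchy--Schwarz split of $\mu(\xi)\phi_i$ against the weight $\xi^2+\eta$, evaluation of the weighted integral of $\mu^2$ via Lemma \ref{nuse1} at $\lambda=0$, and absorption of the single term $l_i|\phi_i|^2$ into the sum $l_1|\phi_1|^2+l_2|\phi_2|^2$. The only cosmetic difference is that the paper carries the parameter $|\lambda|$ through the estimate before setting it to zero, whereas you set $\lambda=0$ from the start; the resulting constant $M=\eta^{a-1}\max\{l_1,l_2\}$ is the same.
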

\begin{proof}
First, the left hand side of the above inequality can be written as
\begin{align*}
\left[l_{1}\frac{\sin(a\pi)}{\pi}\int^{\infty}_{-\infty}\mu(\xi)\phi_{1}(\xi,t)d\xi\right]^{2}=
\left[l_{1}\frac{\sin(a\pi)}{\pi}\right]^{2}\left(\int^{\infty}_{-\infty}\frac{\mu(\xi)}{\sqrt{\xi^{2}+\eta+|\lambda|}}\phi_{1}(\xi,t){\sqrt{\xi^{2}+\eta+|\lambda|}}d\xi\right)^{2}
\end{align*}
Thanks to H\"{o}lder's inequality and Lemma \ref{nuse1}, it is straightforward to verify that
\begin{align*}
&\left(\int^{\infty}_{-\infty}\frac{\mu(\xi)}{\sqrt{\xi^{2}+\eta+|\lambda|}}\phi_{1}(\xi,t){\sqrt{\xi^{2}+\eta+|\lambda|}}d\xi\right)^{2}\\ & \leq
\left(\int^{\infty}_{-\infty}\frac{\mu^{2}(\xi)}{{\xi^{2}+\eta+|\lambda|}}d\xi\right)\left(\int^{\infty}_{-\infty}\phi^{2}_{1}(\xi,t)\left({\xi^{2}+\eta+|\lambda|}\right)d\xi\right)\\
&=\frac{\pi}{\sin(a\pi)}(\eta+|\lambda|)^{a-1}\left(\int^{\infty}_{-\infty}\phi^{2}_{1}(\xi,t)\left({\xi^{2}+\eta+|\lambda|}\right)d\xi\right).
\end{align*}
That is,
\begin{align*}
\left[l_{1}\frac{\sin(a\pi)}{\pi}\int^{\infty}_{-\infty}\mu(\xi)\phi_{1}(\xi,t)d\xi\right]^{2}\leq \frac{\sin(a\pi)}{\pi}(\eta+|\lambda|)^{a-1}\int^{\infty}_{-\infty}l_{1}^{2}\phi^{2}_{1}(\xi,t)\left({\xi^{2}+\eta+|\lambda|}\right)d\xi.
\end{align*}
 Since $0$ belongs to the set $E$, $\lambda$ can be taken as $0$. And we definite $M_{1}=\eta^{a-1}l_1$, then we have
\begin{align*}
\left[l_{1}\frac{\sin(a\pi)}{\pi}\int^{\infty}_{-\infty}\mu(\xi)\phi_{1}(\xi,t)d\xi\right]^{2}\leq
M_{1}\frac{\sin(a\pi)}{\pi}\int^{\infty}_{-\infty}(\xi^{2}+\eta)\left(l_{1}|\phi_{1}|^{2}\right)d\xi.
\end{align*}
Using the same method and taking $M_{2}=\eta^{a-1}l_2$, we can obtain
\begin{align*}
\left[l_{2}\frac{\sin(a\pi)}{\pi}\int^{\infty}_{-\infty}\mu(\xi)\phi_{2}(\xi,t)d\xi\right]^{2}\leq
M_{2}\frac{\sin(a\pi)}{\pi}\int^{\infty}_{-\infty}(\xi^{2}+\eta)\left(l_{2}|\phi_{2}|^{2}\right)d\xi.
\end{align*}
Finally, by choosing $M=\max\{M_{1},M_{2}\}$, we completely prove inequality \eqref{nuse2}.
\end{proof}
\begin{lemma}\label{lem6.1}
Let
\begin{align*}
I_{1}=\int^{L}_{0}\left( \rho V_{t}V+\mu P_{t}P \right)dx,
\end{align*}
then
\begin{align*}
\frac{d}{dt}I_{1}\leq& -\left(\alpha_{1}-(1+\gamma^{2})\eta_{1}\right)\int^{L}_{0}|V_{x}|^{2}dx-\left(\beta-\eta_{1}\right)\int^{L}_{0}|\gamma V_{x}-P_{x}|^{2}dx
+\frac{\delta^2 C_p}{2\eta_{1}}\int^{L}_{0}|\theta_{x}|^{2}dx\\
&+\frac{ML}{\eta_{1}}\frac{\sin(a\pi)}{2\pi}\int^{\infty}_{-\infty}(\xi^{2}+\eta)\left(l_{1}|\phi_{1}|^{2}+l_{2}|\phi_{2}|^{2}\right)d\xi+\rho\int^{L}_{0}|V_{t}|^{2}dx+\mu\int^{L}_{0}|P_{t}|^{2}dx,
\end{align*}
where $\eta_{1}$ is an arbitrary constant, and we will give its range of values at the end of this section.
\end{lemma}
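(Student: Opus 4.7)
The plan is to obtain the estimate by differentiating $I_1$ in time, substituting the PDEs \eqref{5.11} and \eqref{5.13}, integrating by parts in $x$ (absorbing the boundary contributions via \eqref{bc1}, \eqref{5.15}, \eqref{5.16}), and then taming the resulting cross terms with Young's inequality, Poincar\'e, and the bound \eqref{nuse2}.

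First I would compute
\begin{align*}
\frac{d}{dt}I_1=\int_0^L\big(\rho V_{tt}V+\rho|V_t|^2+\mu P_{tt}P+\mu|P_t|^2\big)\,dx,
\end{align*}
substitute $\rho V_{tt}=\alpha V_{xx}-\gamma\beta P_{xx}-\delta\theta_x$ and $\mu P_{tt}=\beta P_{xx}-\gamma\beta V_{xx}$, and then integrate each second-order term by parts. The boundary contributions at $x=0$ vanish because $V(0)=P(0)=0$; the contribution of $\delta\theta_x$ yields no boundary term since $V(0)=0$ and $\theta(L)=0$, and after one integration by parts becomes $\delta\int_0^L\theta V_x\,dx$. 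Using $\alpha=\alpha_1+\gamma^2\beta$, the interior terms collapse to $-\alpha_1\int V_x^2-\beta\int(\gamma V_x-P_x)^2$, and the remaining boundary terms at $x=L$ combine into
\begin{align*}
V(L)\bigl[\alpha V_x(L)-\gamma\beta P_x(L)\bigr]+P(L)\bigl[\beta P_x(L)-\gamma\beta V_x(L)\bigr],
\end{align*}
which by \eqref{5.15} and \eqref{5.16} equals
\begin{align*}
-l_1\tfrac{\sin(a\pi)}{\pi}V(L)\!\int_{-\infty}^{+\infty}\!\mu(\xi)\phi_1\,d\xi\;-\;l_2\tfrac{\sin(a\pi)}{\pi}P(L)\!\int_{-\infty}^{+\infty}\!\mu(\xi)\phi_2\,d\xi.
\end{align*}

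Next I would estimate the three cross terms. For $\delta\int\theta V_x$ I apply Young's inequality with parameter $\eta_1$ to get $\tfrac{\eta_1}{2}\int V_x^2+\tfrac{\delta^2}{2\eta_1}\int\theta^2$, then use the Poincar\'e inequality on $\theta$ (available because $\theta(L)=0$) to replace $\int\theta^2$ by $C_p\int\theta_x^2$. For the $V(L)$-boundary term I combine Young's inequality, the trace estimate $V(L)^2=\bigl(\int_0^L V_x\,dx\bigr)^2\le L\int V_x^2$, and Lemma \eqref{nuse2}, producing a $\tfrac{\eta_1}{2}\int V_x^2$ piece plus a dissipative $\xi$-integral. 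For the $P(L)$-boundary term I use $P_x=\gamma V_x-(\gamma V_x-P_x)$ so that
\begin{align*}
P(L)^2\le L\int_0^L P_x^2\,dx\le 2\gamma^2 L\int_0^L V_x^2\,dx+2L\int_0^L(\gamma V_x-P_x)^2\,dx,
\end{align*}
and then Young's plus \eqref{nuse2} give $\gamma^2\eta_1\int V_x^2+\eta_1\int(\gamma V_x-P_x)^2$ plus another $\xi$-integral.

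Finally, summing all contributions, the coefficient of $\int V_x^2$ becomes $-\alpha_1+(\tfrac12+\tfrac12+\gamma^2)\eta_1=-(\alpha_1-(1+\gamma^2)\eta_1)$, the coefficient of $\int(\gamma V_x-P_x)^2$ becomes $-(\beta-\eta_1)$, the $\theta_x^2$ coefficient is $\tfrac{\delta^2 C_p}{2\eta_1}$, and the two $\phi$-integrals pool into the stated $\tfrac{ML}{\eta_1}\tfrac{\sin(a\pi)}{2\pi}$ factor, while the kinetic terms $\rho\int|V_t|^2$ and $\mu\int|P_t|^2$ remain untouched. The main obstacle is the careful accounting in the Young's inequality step for the $P(L)$ boundary term: one must split $P_x$ into its $V_x$ and $\gamma V_x-P_x$ components so that the $(1+\gamma^2)\eta_1$ coefficient on $\int V_x^2$ emerges naturally alongside $\eta_1\int(\gamma V_x-P_x)^2$; any less careful splitting would either spoil the sign structure on $V_x^2$ or fail to match the stated constants. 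Once this splitting is done, the remaining steps are routine algebraic collection.
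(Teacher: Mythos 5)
Your proposal is correct and follows essentially the same route as the paper: differentiate $I_1$, substitute the equations, integrate by parts, and absorb the boundary and coupling terms via Young's inequality, the trace estimate $|V(L)|^2\le L\int_0^L|V_x|^2dx$, the splitting of $P_x$ into $\gamma V_x$ and $\gamma V_x-P_x$, and the bound \eqref{nuse2}, with the same calibration of the Young parameters producing the stated coefficients. The only cosmetic difference is that you integrate the thermal coupling term by parts once more and apply Poincar\'e to $\theta$ (using $\theta(L)=0$), whereas the paper keeps $-\delta\int_0^L\theta_x V\,dx$ and applies Poincar\'e to $V$; both yield the same $\frac{\delta^{2}C_{p}}{2\eta_{1}}$ coefficient.
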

\begin{proof}
To prove the inequality, we first need to find the derivative of $I_{1}$. That is
\begin{align*}
\frac{d}{dt}I_{1}=\int^{L}_{0}\left(\rho V_{tt}V+\rho|V_{t}|^{2}+\mu P_{tt}P+\mu|P_{t}|^{2}\right)dx.
\end{align*}
Using the equations \eqref{5.11} and \eqref{5.13}, we have $\rho V_{tt}=\alpha V_{xx}-\gamma\beta P_{xx}-\delta\theta_{x}$ and $\mu P_{tt}=\beta P_{xx}-\gamma\beta V_{xx}$. Then, we obtain
\begin{align}
\frac{d}{dt}I_{1}=\int^{L}_{0}\left(\left(\alpha V_{xx}-\gamma\beta P_{xx}\right)V-\delta\theta_{x}V+\left(\beta P_{xx}-\gamma\beta V_{xx}\right)P+\rho|V_{t}|^{2}+\mu|P_{t}|^{2}\right) dx.
\end{align}
Integrating by parts and using the boundary conditions \eqref{bc1}-\eqref{5.16}, we have
\begin{equation}\label{6.4}
\begin{aligned}
\frac{d}{dt}I_{1}=&\int^{L}_{0}\left[\rho|V_{t}|^{2}+\mu|P_{t}|^{2}-\alpha_{1}|V_{x}|^{2}-\beta|\gamma V_{x}-P_{x}|^{2}-\delta\theta_{x}V\right]dx\\
&-l_{1}\partial^{a,\eta}_{t}V(L,t)V(L,t)-l_{2}\partial^{a,\eta}_{t}P(L,t)P(L,t).
\end{aligned}
\end{equation}
Then using H\"{o}lder's inequality and Young's inequality $ab\leq\frac{a^2}{4\varepsilon _{i}}+ \varepsilon _{i}b^2$ with $a,b\in\mathbb{R}$ , $\varepsilon _{i}>0$, $i=1,2,3$ and combining \eqref{nuse2}, we can obtain
\begin{equation}\label{6.400}
\begin{aligned}
\left|-l_{1}\partial^{a,\eta}_{t}V(L,t)V(L,t)\right|&\leq \frac{1}{4\varepsilon_1}\left[l_{1}\partial^{a,\eta}_{t}V(L,t)\right]^2+\varepsilon_1|V(L,t)|^2\\
&=\frac{1}{4\varepsilon_1}\left[l_{1}\frac{\sin(a\pi)}{\pi}\int^{\infty}_{-\infty}\mu(\xi)\phi_{1}(\xi,t)d\xi\right]^2+\varepsilon_1\left[\int^L_0 V_x(x,t) dx\right]^2\\
&\leq \frac{M}{4\varepsilon_1}\frac{\sin(a\pi)}{\pi}\int^{\infty}_{-\infty}(\xi^{2}+\eta)\left(l_{1}|\phi_{1}|^{2}+l_{2}|\phi_{2}|^{2}\right)d\xi+\varepsilon_1 L\int^L_0 |V_x|^2dx.
\end{aligned}
\end{equation}
Similarly, we can get
\begin{equation}\label{6.401}
\begin{aligned}
\left|-l_{2}\partial^{a,\eta}_{t}P(L,t)P(L,t)\right|&\leq \frac{1}{4\varepsilon_2}\left[l_{1}\partial^{a,\eta}_{t}P(L,t)\right]^2+\varepsilon_2|P(L,t)|^2\\
&\leq \frac{M}{4\varepsilon_2}\frac{\sin(a\pi)}{\pi}\int^{\infty}_{-\infty}(\xi^{2}+\eta)\left(l_{1}|\phi_{1}|^{2}+l_{2}|\phi_{2}|^{2}\right)d\xi+\varepsilon_2 L\int^L_0 |P_x|^2dx.
\end{aligned}
\end{equation}
Now we notice that
\begin{align}\label{6.5}
\int^{L}_{0}|P_{x}|^{2}dx =\int^{L}_{0}\left|\gamma V_{x}-P_{x}-\gamma V_{x}\right|^{2}dx \leq 2\int^{L}_{0}\left|\gamma V_{x}-P_{x}\right|^{2}dx+2\gamma^{2}\int^{L}_{0}\left|V_{x}\right|^{2}dx.
\end{align}
Note that when we combine \eqref{6.400} with \eqref{6.401} and \eqref{6.5}, and use Young's inequality and Poincar\'{e}'s inequality into \eqref{6.4},  then \eqref{6.4} can be further rewritten as
\begin{align*}
\frac{d}{dt}I_{1}\leq& -\left(\alpha_{1}-\varepsilon_{1}L-2\gamma^{2}\varepsilon_{2}L-\varepsilon_{3}C_p\right)\int^{L}_{0}|V_{x}|^{2}dx-\left(\beta-2\varepsilon_{2}L\right)\int^{L}_{0}|\gamma V_{x}-P_{x}|^{2}dx\\
&+\frac{\delta^{2}}{4\varepsilon_{3}}\int^{L}_{0}|\theta_{x}|^{2}dx
+\left(\frac{M}{4\varepsilon_{1}}+\frac{M}{4\varepsilon_{2}}\right)\frac{\sin(a\pi)}{\pi}\int^{\infty}_{-\infty}(\xi^{2}+\eta)\left(l_{1}|\phi_{1}|^{2}+l_{2}|\phi_{2}|^{2}\right)d\xi\\
&+\rho\int^{L}_{0}|V_{t}|^{2}dx+\mu\int^{L}_{0}|P_{t}|^{2}dx.
\end{align*}
The above formula holds for any $\varepsilon _{i}>0$. By defining a new constant $\eta_1$ and choosing the appropriate $\varepsilon _{i}>0$ such that $\varepsilon _{1}L=\varepsilon _{2}L=\varepsilon _{3}C_p=\frac{\eta_{1}}{2}$, we arrive at
\begin{align*}
\frac{d}{dt}I_{1}\leq& -\left(\alpha_{1}-(1+\gamma^{2})\eta_{1}\right)\int^{L}_{0}|V_{x}|^{2}dx-\left(\beta-\eta_{1}\right)\int^{L}_{0}|\gamma V_{x}-P_{x}|^{2}dx
+\frac{\delta^2 C_p}{2\eta_{1}}\int^{L}_{0}|\theta_{x}|^{2}dx\\
&+\frac{ML}{\eta_{1}}\frac{\sin(a\pi)}{2\pi}\int^{\infty}_{-\infty}(\xi^{2}+\eta)\left(l_{1}|\phi_{1}|^{2}+l_{2}|\phi_{2}|^{2}\right)d\xi+\rho\int^{L}_{0}|V_{t}|^{2}dx+\mu\int^{L}_{0}|P_{t}|^{2}dx.
\end{align*}
The above formula holds for any $\eta_1>0$. Thus, the proof of the lemma is completed.
\end{proof}
\begin{lemma}\label{lem6.2}
Let
\begin{align*}
I_{2}=\rho c \int^{L}_{0}\theta(x,t)\int^{L}_{x} V_{t}(y,t)dy dx,
\end{align*}
then
\begin{align*}
\frac{d}{dt}I_{2}\leq& \eta_{2} \int^{L}_{0}|V_{x}|^{2}dx+\eta_{2}\int^{L}_{0}|\gamma V_{x}-P_{x}|^{2}dx-\left(\delta\rho-\frac{1}{2\gamma^{2}\beta^{2}}\eta_{2}\right)\rho\int^{L}_{0}|V_{t}|^{2}dx\\
&+\left(C_{p}c\delta+\frac{\gamma^{2}\beta^{2}\rho^{2}k^{2}}{2\eta_{2}}+\frac{c^{2}C_{p}\alpha^{2}_{1}}{2\eta_{2}}+\frac{c^2 L C_p}{2 \eta_2}\right)\int^{L}_{0}|\theta_{x}|^{2}dx\\
&+\eta_2 M \frac{\sin(a\pi)}{2\pi}\int^{\infty}_{-\infty}(\xi^{2}+\eta)\left(l_{1}|\phi_{1}|^{2}+l_{2}|\phi_{2}|^{2}\right)d\xi,
\end{align*}
where $\eta_{2}$ is an arbitrary constant, $C_{p}$ is the Poincar\'{e} constant, and we will give its range of values at the end of this section.
\end{lemma}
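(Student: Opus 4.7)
The plan is to differentiate $I_2$ in $t$, substitute the Fourier heat equation $c\theta_t=k\theta_{xx}-\delta V_{xt}$ and the wave equation \eqref{5.11} for $V_{tt}$, and integrate by parts. The useful identity is $\frac{d}{dx}\int_x^L V_t(y,t)\,dy=-V_t(x,t)$, and the boundary data $V(0,t)=0$, $\theta_x(0,t)=0$, $\theta(L,t)=0$ annihilate three of the four possible boundary contributions, leaving only the single piece $c(\alpha V_x(L,t)-\gamma\beta P_x(L,t))\int_0^L\theta\,dx$. Using \eqref{5.15}, this boundary term becomes $-cl_1\frac{\sin(a\pi)}{\pi}\int_{-\infty}^{+\infty}\mu(\xi)\phi_1(\xi,t)\,d\xi\cdot\int_0^L\theta\,dx$, and the decomposition $\alpha V_x-\gamma\beta P_x=\alpha_1V_x+\gamma\beta(\gamma V_x-P_x)$ splits the remaining interior terms into $\rho k\int_0^L\theta_xV_t\,dx$, $-\rho\delta\int_0^L|V_t|^2\,dx$, $c\delta\int_0^L|\theta|^2\,dx$, $-c\alpha_1\int_0^L\theta V_x\,dx$, and $-c\gamma\beta\int_0^L\theta(\gamma V_x-P_x)\,dx$.

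Each cross term is then bounded by Young's inequality with a weight proportional to $\eta_2$. For the $\theta_xV_t$ pair the weight $\eta_2/(\gamma^2\beta^2)$ produces $\eta_2/(2\gamma^2\beta^2)$ on the $|V_t|^2$ side, which combines with the $-\rho\delta\int_0^L|V_t|^2\,dx$ contribution to yield the net $|V_t|^2$ coefficient, and $\gamma^2\beta^2\rho^2k^2/(2\eta_2)$ on the $|\theta_x|^2$ side. For the $\theta V_x$ and $\theta(\gamma V_x-P_x)$ terms, the weights are selected so that $\eta_2$ appears on the gradient sides and a multiple of $1/\eta_2$ appears on the $|\theta|^2$ side. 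The boundary piece involving $\phi_1$ is handled by Young's inequality together with the crude bound $(\int_0^L\theta\,dx)^2\le L\int_0^L|\theta|^2\,dx$, while the factor $\bigl[l_1\tfrac{\sin(a\pi)}{\pi}\int\mu(\xi)\phi_1\,d\xi\bigr]^2$ is absorbed by the inequality \eqref{nuse2}, which converts it into a multiple of the full dissipation $\frac{\sin(a\pi)}{\pi}\int_{-\infty}^{+\infty}(\xi^2+\eta)(l_1|\phi_1|^2+l_2|\phi_2|^2)\,d\xi$ already present on the right-hand side of \eqref{5.21}. Finally, the Poincaré inequality $\int_0^L|\theta|^2\,dx\le C_p\int_0^L|\theta_x|^2\,dx$, valid because $\theta(L,t)=0$, consolidates every surviving $|\theta|^2$ contribution into an $|\theta_x|^2$ contribution, and summation of the resulting constants $C_pc\delta$, $\gamma^2\beta^2\rho^2k^2/(2\eta_2)$, $c^2C_p\alpha_1^2/(2\eta_2)$, and $c^2LC_p/(2\eta_2)$ reproduces the aggregated coefficient in the statement.

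The main obstacle is the combinatorial bookkeeping of the Young weights: three distinct Young applications contribute to the $|\theta_x|^2$ slot and each must carry its own $1/\eta_2$ factor, while simultaneously the complementary weights on the gradient and velocity sides must remain proportional to $\eta_2$ so that, when this lemma is later combined with Lemma \ref{lem6.1} in the Lyapunov construction for exponential decay, the resulting $|V_x|^2$, $|\gamma V_x-P_x|^2$, and $|V_t|^2$ coefficients all become strictly negative for sufficiently small $\eta_2$ and an appropriate choice of the multiplier in front of $I_2$. The particular choice $\eta_2/(\gamma^2\beta^2)$ in the $\theta_xV_t$ pair is forced by the requirement that the surviving $|V_t|^2$ term match exactly the coefficient needed to counterbalance the positive $\rho\int_0^L|V_t|^2\,dx$ contribution in Lemma \ref{lem6.1}.
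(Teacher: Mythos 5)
Your proposal is correct and follows essentially the same route as the paper: differentiate $I_2$, substitute the heat and wave equations, integrate by parts so that the boundary conditions leave only the single term $c(\alpha V_x-\gamma\beta P_x)(L,t)\int_0^L\theta\,dx$, then apply Young's inequality with $\eta_2$-weighted parameters, the bound \eqref{nuse2}, and Poincar\'e's inequality exactly as in the paper (the paper's choice $\varepsilon_2=\eta_2/(2\gamma^2\beta^2)$, $\varepsilon_3=\eta_2/(2\alpha_1^2)$, $\varepsilon_4=\eta_2/2$ matches your weights). The only discrepancy is the extra factor of $\rho$ in the stated $|V_t|^2$ coefficient, which is a typo in the lemma itself rather than a gap in your argument.
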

\begin{proof}
Finding the derivative of $I_{2}$, we have
\begin{align*}
\frac{d}{dt}I_{2}=\rho c \int^{L}_{0}\theta_{t}(x,t)\int^{L}_{x} V_{t}(y,t)dy dx+\rho c \int^{L}_{0}\theta(x,t)\int^{L}_{x} V_{tt}(y,t)dy dx.
\end{align*}
 Using the fact that $\rho V_{tt}=\alpha V_{xx}-\gamma\beta P_{xx}-\delta\theta_{x}$, $c\theta_{t}=k_{xx}-\delta V_{xt}$, we have
\begin{align*}
\frac{d}{dt}I_{2}=\rho\int^{L}_{0}\left(k\theta_{xx}-\delta V_{xt}\right)\int^{L}_{x} V_{t} dydx+c\int^{L}_{0}\theta\int^{L}_{x}\left(\alpha V_{xx}-\gamma\beta P_{xx}-\delta\theta_{x}\right)dydx.
\end{align*}
With the help of boundary conditions \eqref{bc1}-\eqref{5.16}, we have
\begin{align*}
\frac{d}{dt}I_{2}=&\rho\int^{L}_{0}k\theta_{x}V_{t}dx-\delta\rho\int^{L}_{0}|V_{t}|^{2}dx-c\int^{L}_{0}\theta\left(\alpha V_{x}-\gamma\beta P_{x}\right)dx+c\delta\int^{L}_{0}|\theta|^{2}dx\\
&+c\left[(\alpha V_x-\gamma\beta P_x)(L,t)\right]\int^{L}_{0}\theta(x,t)dx.
\end{align*}
Using Poincar\'{e}'s inequality and Young's inequality $ab\leq\frac{a}{4\varepsilon _{i}}+ \varepsilon _{i}b$ with $a,b\in\mathbb{R}$, $\varepsilon _{i}>0$, $i=2,3,4$ and using \eqref{nuse2}, we get
\begin{align*}
\frac{d}{dt}I_{2}\leq& 2\varepsilon_{3}\alpha^{2}_{1} \int^{L}_{0}|V_{x}|^{2}dx+2\varepsilon_{2}\gamma^{2}\beta^{2}\int^{L}_{0}|\gamma V_{x}-P_{x}|^{2}dx-\left(\delta\rho-\varepsilon_{2}\right)\int^{L}_{0}|V_{t}|^{2}dx\\
&+\left(C_{p}c\delta+\frac{\rho^{2}k^{2}}{4\varepsilon_{2}}+\frac{c^{2}C_{p}}{4\varepsilon_{3}}+\frac{c^2 LC_p}{4\varepsilon_4}\right)\int^{L}_{0}|\theta_{x}|^{2}dx\\
&+\varepsilon_4M\frac{\sin(a\pi)}{\pi}\int^{\infty}_{-\infty}(\xi^{2}+\eta)\left(l_{1}|\phi_{1}|^{2}+l_{2}|\phi_{2}|^{2}\right)d\xi.
\end{align*}
The above formula holds for any $\varepsilon _{i}>0$. Let us define a new constant $\eta_{2}$ and choose $\varepsilon _{i}>0$ such that $\varepsilon _{2}=\frac{\eta_{2}}{2\gamma^{2}\beta^{2}}$, $\varepsilon _{3}=\frac{\eta_{2}}{2\alpha^{2}_{1}}$, $\varepsilon_4=\frac{\eta_2}{2}$. Then, we get
\begin{align*}
\frac{d}{dt}I_{2}\leq& \eta_{2} \int^{L}_{0}|V_{x}|^{2}dx+\eta_{2}\int^{L}_{0}|\gamma V_{x}-P_{x}|^{2}dx-\left(\delta\rho-\frac{1}{2\gamma^{2}\beta^{2}}\eta_{2}\right)\rho\int^{L}_{0}|V_{t}|^{2}dx\\
&+\left(C_{p}c\delta+\frac{\gamma^{2}\beta^{2}\rho^{2}k^{2}}{2\eta_{2}}+\frac{c^{2}C_{p}\alpha^{2}_{1}}{2\eta_{2}}+\frac{c^2 L C_p}{2 \eta_2}\right)\int^{L}_{0}|\theta_{x}|^{2}dx\\
&+\eta_2 M \frac{\sin(a\pi)}{2\pi}\int^{\infty}_{-\infty}(\xi^{2}+\eta)\left(l_{1}|\phi_{1}|^{2}+l_{2}|\phi_{2}|^{2}\right)d\xi.
\end{align*}
Now, we obtain that the lemma is accurate.
\end{proof}
\begin{lemma}\label{lem6.3}
Let
\begin{align*}
I_{3}=\rho\int^{L}_{0}  V_{t}V dx+\gamma\mu \int^{L}_{0}P_{t}V dx,
\end{align*}
then
\begin{align*}
\frac{d}{dt}I_{3}\leq& -\left(\alpha_{1}-\frac{\alpha_{1}}{4\eta_{3}}\right)\int^{L}_{0}|V_{x}|^{2}dx+(\rho+\eta_{3})\int^{L}_{0}|V_{t}|^{2}dx+\frac{\gamma^{2}\mu^{2}}{4\eta_{3}}\int^{L}_{0}|P_{t}|^{2}dx\\
&+\frac{4ML\eta_{3}}{\alpha_{1}}\frac{\sin(a\pi)}{2\pi}\int^{\infty}_{-\infty}(\xi^{2}+\eta)\left(l_{1}|\phi_{1}|^{2}+l_{2}|\phi_{2}|^{2}\right)d\xi+\frac{4C_{p}\delta^{2}\eta_{3}}{\alpha_{1}}\int^{L}_{0}|\theta_{x}|^{2}dx,
\end{align*}
where $\eta_{3}$ is an arbitrary constant, $C_{p}$ is the Poincar\'{e} constant, and we will give its range of values at the end of this section.
\end{lemma}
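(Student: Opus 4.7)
The plan is to differentiate $I_3$ and exploit the specific linear combination $\rho V_{tt} + \gamma\mu P_{tt}$ that this produces: substituting $\rho V_{tt}=\alpha V_{xx}-\gamma\beta P_{xx}-\delta\theta_x$ and $\mu P_{tt}=\beta P_{xx}-\gamma\beta V_{xx}$ from \eqref{5.11} and \eqref{5.13}, the $P_{xx}$ contributions cancel thanks to the identity $\alpha=\alpha_{1}+\gamma^{2}\beta$, leaving exactly $\alpha_{1} V_{xx}-\delta\theta_x$. This algebraic cancellation is the motivation for the weight $\gamma\mu$ in the definition of $I_3$. Accordingly, I would reduce $\frac{d}{dt}I_3$ to
\[
\frac{d}{dt}I_3 = \int^{L}_{0} (\alpha_{1} V_{xx}-\delta\theta_x)V\,dx + \rho\int^{L}_{0} |V_t|^2\,dx + \gamma\mu\int^{L}_{0} P_t V_t\,dx.
\]

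Next I would integrate by parts. Using $V(0,t)=0$ and $\theta(L,t)=0$ from \eqref{bc1}, the spatial integrals produce a boundary term $\alpha_{1} V_x(L,t)V(L,t)$, the dominant dissipative term $-\alpha_{1}\int^{L}_{0} |V_x|^2\,dx$, and a cross term $\delta\int^{L}_{0} \theta V_x\,dx$. To handle the trace $V_x(L,t)$ I would combine the two boundary conditions \eqref{5.15} and \eqref{5.16} into the single identity
\[
\alpha_{1} V_x(L,t) = -l_{1}\partial^{a,\eta}_{t} V(L,t) - \gamma l_{2} \partial^{a,\eta}_{t} P(L,t),
\]
obtained by eliminating $P_x(L,t)$ with $\alpha=\alpha_{1}+\gamma^{2}\beta$, and then apply Young's inequality $ab\leq\varepsilon a^{2}+\frac{1}{4\varepsilon}b^{2}$ to $\alpha_{1} V_x(L,t)\cdot V(L,t)$. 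The factor $(\alpha_{1} V_x(L,t))^{2}$ will be controlled, after $(x+y)^{2}\leq 2x^{2}+2y^{2}$, by the dissipation bound \eqref{nuse2}, producing a multiple of $\frac{\sin(a\pi)}{\pi}\int^{+\infty}_{-\infty}(\xi^{2}+\eta)(l_{1}|\phi_{1}|^{2}+l_{2}|\phi_{2}|^{2})d\xi$, while $|V(L,t)|^{2}\leq L\int^{L}_{0} |V_x|^{2}\,dx$ follows from Cauchy-Schwarz applied to $V(L,t)=\int^{L}_{0} V_x\,dx$.

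The remaining cross terms are handled by standard Young estimates: $\gamma\mu\int^{L}_{0} P_t V_t\,dx \leq \eta_{3}\int^{L}_{0} |V_t|^{2}\,dx + \frac{\gamma^{2}\mu^{2}}{4\eta_{3}}\int^{L}_{0} |P_t|^{2}\,dx$, which directly yields the stated coefficients of $|V_t|^{2}$ and $|P_t|^{2}$; and $\delta\int^{L}_{0} \theta V_x\,dx$ is controlled by Young together with Poincar\'{e}'s inequality $\int^{L}_{0} |\theta|^{2}\,dx\leq C_{p}\int^{L}_{0} |\theta_x|^{2}\,dx$ (valid because $\theta(L,t)=0$), with its small $V_x$-loss absorbed into the same parameter $\eta_{3}$.

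The main obstacle will be the parameter bookkeeping. Choosing the Young parameter in the boundary step equal to $\frac{L\eta_{3}}{\alpha_{1}}$ should make $\frac{1}{4\varepsilon}|V(L,t)|^{2}$ deliver precisely the coefficient $\frac{\alpha_{1}}{4\eta_{3}}$ on $\int^{L}_{0}|V_x|^{2}\,dx$, and the Young parameter for the $\theta$-step must be selected so that the thermal and fractional-dissipation contributions come out with the common factor $\frac{4\eta_{3}}{\alpha_{1}}$ appearing in the statement. Beyond this careful coordination, no new analytical difficulty enters: everything reduces to assembling \eqref{nuse2}, Young's inequality, and Poincar\'{e}'s inequality in the right order.
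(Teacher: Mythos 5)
Your proposal is correct and follows essentially the same route as the paper: differentiate $I_{3}$, substitute \eqref{5.11} and \eqref{5.13} so that the identity $\alpha=\alpha_{1}+\gamma^{2}\beta$ cancels the $P_{xx}$ contributions, integrate by parts, control the boundary term $\alpha_{1}V_{x}(L,t)V(L,t)$ via \eqref{nuse2} together with $|V(L,t)|^{2}\leq L\int_{0}^{L}|V_{x}|^{2}dx$, and finish with Young and Poincar\'{e} estimates on the cross terms. The only differences are presentational (you perform the cancellation in one step and write out the combined boundary identity for $\alpha_{1}V_{x}(L,t)$ explicitly), and the harmless numerical factors from $(x+y)^{2}\leq 2x^{2}+2y^{2}$ can be absorbed into $M$ exactly as the paper implicitly does.
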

\begin{proof}
To prove the inequality, we first need to find the derivative of $I_3$. That is
\begin{align*}
\frac{d}{dt}I_{3}=\int^{L}_{0}\left(\rho V_{tt}V+\rho|V_{t}|^{2}+\gamma\mu P_{tt}V+\gamma\mu P_{t}V_{t} \right)dx.
\end{align*}
Here we also make use of the fact that $\rho V_{tt}=\alpha V_{xx}-\gamma\beta P_{xx}-\delta\theta_{x}$ and $\alpha=\alpha_{1}+\gamma^{2}\beta$ in equation \eqref{5.11}. Then the above equality can be further rewritten as
\begin{align*}
\frac{d}{dt}I_{3}=&\rho\int^{L}_{0}|V_{t}|^{2}dx-\alpha_{1}\int^{L}_{0}|V_{x}|^{2}dx +\alpha_{1}V_x(L,t)V(L,t)+\int^L_0 \delta\theta V_{x}dx\\
 &+\int^{L}_{0}\gamma\beta(\gamma V-P)_{xx}Vdx
+\int^{L}_{0}\gamma\mu P_{tt}Vdx+\int^L_0 \gamma\mu P_{t}V_{t}dx.
\end{align*}
Using the equation \eqref{5.13}, we have $\mu P_{tt}=\beta P_{xx}-\gamma\beta V_{xx}$. Then we obtain
\begin{align*}
\frac{d}{dt}I_{3}=&\rho\int^{L}_{0}|V_{t}|^{2}dx-\alpha_{1}\int^{L}_{0}|V_{x}|^{2}dx +\alpha_{1}V_x(L,t)V(L,t)+\int^{L}_{0}\delta\theta V_{x}dx +\int^{L}_{0}\gamma\mu P_{t}V_{t}dx.
\end{align*}
Using Poincar\'{e}'s inequality and Young's inequality $ab\leq\frac{a}{4\varepsilon _{i}}+ \varepsilon _{i}b$ with $a,b\in\mathbb{R}$, $\varepsilon _{i}>0$, $i=3,4,5$, we have
\begin{align*}
\frac{d}{dt}I_{3}\leq& -\left(\alpha_{1}-\varepsilon_{3}L-\varepsilon_{4}\right)\int^{L}_{0}|V_{x}|^{2}dx+(\rho+\varepsilon_{5})\int^{L}_{0}|V_{t}|^{2}dx+\frac{\gamma^{2}\mu^{2}}{4\varepsilon_{5}}\int^{L}_{0}|P_{t}|^{2}dx\\
&+\frac{M}{2\varepsilon_{3}}\frac{\sin(a\pi)}{2\pi}\int^{\infty}_{-\infty}(\xi^{2}+\eta)\left(l_{1}|\phi_{1}|^{2}+l_{2}|\phi_{2}|^{2}\right)d\xi+\frac{C_{p}\delta^{2}}{4\varepsilon_{4}}\int^{L}_{0}|\theta_{x}|^{2}dx.
\end{align*}
The above formula holds for any $\varepsilon _{i}>0$. By defining a new constant $\eta_{3}$ and choosing the appropriate $\varepsilon _{i}>0$ such that $\varepsilon _{3}L=\frac{\alpha_{1}}{8\eta_{3}}$, $\varepsilon _{4}=\frac{\alpha_{1}}{8\eta_{3}}$, $\varepsilon_{5}=\eta_{3}$, we arrive at
\begin{align*}
\frac{d}{dt}I_{3}\leq& -\left(\alpha_{1}-\frac{\alpha_{1}}{4\eta_{3}}\right)\int^{L}_{0}|V_{x}|^{2}dx+(\rho+\eta_{3})\int^{L}_{0}|V_{t}|^{2}dx+\frac{\gamma^{2}\mu^{2}}{4\eta_{3}}\int^{L}_{0}|P_{t}|^{2}dx\\
&+\frac{4ML\eta_{3}}{\alpha_{1}}\frac{\sin(a\pi)}{2\pi}\int^{\infty}_{-\infty}(\xi^{2}+\eta)\left(l_{1}|\phi_{1}|^{2}+l_{2}|\phi_{2}|^{2}\right)d\xi+\frac{4C_{p}\delta^{2}\eta_{3}}{\alpha_{1}}\int^{L}_{0}|\theta_{x}|^{2}dx.
\end{align*}
The above formula holds for any $\eta_3>0$. Thus, the proof of the lemma is completed.
\end{proof}

\begin{lemma}\label{lem6.4}
Let
\begin{align*}
I_{4}=\rho\int^{L}_{0} V_{t}(\gamma V-P)dx+\gamma\mu \int^{L}_{0}P_{t}(\gamma V-P)dx,
\end{align*}
then
\begin{equation}\label{6.40}
\begin{aligned}
\frac{d}{dt}I_{4}\leq& \frac{\alpha_{1}^{2}}{\eta_{4}}\int^{L}_{0}|V_{x}|^{2}dx+\frac{3\eta_{4}}{4}\int^{L}_{0}|\gamma V_{x}-P_{x}|^{2}dx
+\frac{\delta^{2}C_{p}}{\eta_{4}}\int^{L}_{0}|\theta_{x}|^{2}dx\\
&+\frac{\alpha_{1}^{2}M}{\eta_{4}}\frac{\sin(a\pi)}{2\pi}\int^{\infty}_{-\infty}(\xi^{2}+\eta)\left(l_{1}|\phi_{1}|^{2}+l_{2}|\phi_{2}|^{2}\right)d\xi\\
&+\left[\gamma\rho+\frac{1}{\eta_{4}}\left(\rho^{2}+\gamma^{4}\mu^{2}\right)\right]\int^{L}_{0}|V_{t}|^{2}dx
-\left(\gamma\mu-\frac{1}{2}\eta_{4}\right)\int^{L}_{0}|P_{t}|^{2}dx.
\end{aligned}
\end{equation}
where $\eta_{4}$ is an arbitrary constant, we will give its range of values at the end of this section. $C_{p}$ is the Poincare constant.
\end{lemma}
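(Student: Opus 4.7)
The plan is to follow the template of Lemmas \ref{lem6.1}--\ref{lem6.3}: differentiate $I_{4}$ in time, substitute the evolution equations \eqref{5.11} and \eqref{5.13}, integrate by parts using the boundary conditions \eqref{bc1}--\eqref{5.16}, and close with Young's and Poincar\'{e}'s inequalities together with the key boundary estimate \eqref{nuse2}. First I would differentiate, obtaining
\begin{equation*}
\frac{d}{dt}I_{4}=\int_{0}^{L}\rho V_{tt}(\gamma V-P)\,dx+\int_{0}^{L}\gamma\mu P_{tt}(\gamma V-P)\,dx+\int_{0}^{L}\rho V_{t}(\gamma V_{t}-P_{t})\,dx+\int_{0}^{L}\gamma\mu P_{t}(\gamma V_{t}-P_{t})\,dx,
\end{equation*}
the last two terms producing $\gamma\rho\int|V_{t}|^{2}\,dx$, $-\gamma\mu\int|P_{t}|^{2}\,dx$, and a cross piece $(\rho+\gamma^{2}\mu)\int V_{t}P_{t}\,dx$ to be absorbed by Young's inequality.

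The algebraic observation motivating the specific combination in $I_{4}$ is that adding \eqref{5.11} to $\gamma$ times \eqref{5.13} yields
\begin{equation*}
\rho V_{tt}+\gamma\mu P_{tt}=\alpha_{1}V_{xx}-\delta\theta_{x},
\end{equation*}
since, using $\alpha=\alpha_{1}+\gamma^{2}\beta$, the $\gamma\beta(\gamma V-P)_{xx}$ contributions cancel exactly. Multiplying by $(\gamma V-P)$ and integrating by parts, the boundary term at $x=0$ vanishes because $V(0)=P(0)=0$, while at $x=L$ the two conditions \eqref{5.15}--\eqref{5.16} combine into $\alpha_{1}V_{x}(L,t)=-l_{1}\partial_{t}^{a,\eta}V(L,t)-\gamma l_{2}\partial_{t}^{a,\eta}P(L,t)$; for the $\theta$ term, $\theta(L,t)=0$ kills the other boundary contribution. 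Thus the second-order time terms reduce to
\begin{equation*}
-\alpha_{1}\int_{0}^{L}V_{x}(\gamma V_{x}-P_{x})\,dx+\delta\int_{0}^{L}\theta(\gamma V_{x}-P_{x})\,dx+\alpha_{1}V_{x}(L,t)(\gamma V(L,t)-P(L,t)),
\end{equation*}
a form directly amenable to the planned estimates.

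Each interior cross term is then split by Young's inequality with parameters $\varepsilon_{i}>0$, yielding the $\int|V_{x}|^{2}$, $\int|\gamma V_{x}-P_{x}|^{2}$ and $\int|\theta_{x}|^{2}$ contributions (the last one after invoking Poincar\'{e}'s inequality on $\theta$, which vanishes at $x=L$). The boundary contribution is handled by Cauchy--Schwarz, via $|\gamma V(L)-P(L)|\leq \sqrt{L}\,\|\gamma V_{x}-P_{x}\|_{L^{2}(0,L)}$ combined with the fundamental estimate \eqref{nuse2}, which converts $[\partial_{t}^{a,\eta}V(L,t)]^{2}$ and $[\partial_{t}^{a,\eta}P(L,t)]^{2}$ into multiples of $\frac{\sin(a\pi)}{\pi}\int(\xi^{2}+\eta)(l_{1}|\phi_{1}|^{2}+l_{2}|\phi_{2}|^{2})\,d\xi$; this precisely explains the factor $\alpha_{1}^{2}M/\eta_{4}$ appearing in \eqref{6.40}.

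Finally I would absorb the remaining cross product $(\rho+\gamma^{2}\mu)V_{t}P_{t}$ into $|V_{t}|^{2}$ and $|P_{t}|^{2}$ via Young's inequality and collapse all the free parameters $\varepsilon_{i}$ into the single $\eta_{4}$ by taking them as explicit multiples of $\eta_{4}$ and of $\alpha_{1}^{2}/\eta_{4}$; matching the coefficients $\alpha_{1}^{2}/\eta_{4}$, $3\eta_{4}/4$, $\delta^{2}C_{p}/\eta_{4}$, $\alpha_{1}^{2}M/\eta_{4}$, and the $V_{t}, P_{t}$ coefficients displayed in \eqref{6.40} then reduces to routine bookkeeping. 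The main obstacle is keeping track of the boundary contribution at $x=L$: the single value $V_{x}(L)$ must be split between the fractional-damping pieces coming from $\partial_{t}^{a,\eta}V(L)$ and $\partial_{t}^{a,\eta}P(L)$, and both need to be routed consistently through \eqref{nuse2}, which is exactly what forces the sharp form of the fractional-dissipation coefficient in the final estimate.
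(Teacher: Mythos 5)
Your proposal is correct and follows essentially the same route as the paper: differentiate $I_{4}$, exploit the cancellation coming from $\alpha=\alpha_{1}+\gamma^{2}\beta$ (you package it as $\rho V_{tt}+\gamma\mu P_{tt}=\alpha_{1}V_{xx}-\delta\theta_{x}$, the paper substitutes the two equations separately and observes the two resulting integrals cancel), keep the boundary term $\alpha_{1}V_{x}(L,t)(\gamma V-P)(L,t)$, and close with Young, Poincar\'{e} and \eqref{nuse2}. The only blemish is a sign slip in the velocity cross term, which should be $(\gamma^{2}\mu-\rho)\int_{0}^{L}V_{t}P_{t}\,dx$ rather than $(\rho+\gamma^{2}\mu)\int_{0}^{L}V_{t}P_{t}\,dx$; since it is estimated in absolute value by Young's inequality this does not affect the final bound.
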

\begin{proof}
Finding the derivative of $I_{4}$, we have
\begin{align*}
\frac{d}{dt}I_{4}=\int^{L}_{0}\bigg[\rho V_{tt}(\gamma V-P)+\rho V_{t}(\gamma V-P)_{t}+\gamma\mu P_{tt}(\gamma V-P)+\gamma\mu P_{t}(\gamma V-P)_{t}\bigg]dx.
\end{align*}
Using the equation \eqref{5.11}, we have $\rho V_{tt}=\alpha_{1} V_{xx}+ \gamma^{2}\beta V_{xx}-\gamma\beta P_{xx}-\delta\theta_{x}$. And integrating by parts, we have
\begin{align*}
\frac{d}{dt}I_{4}=&\int^{L}_{0}\bigg[\rho V_{t}(\gamma V-P)_t+\gamma\mu P_{t}(\gamma V-P)_{t}\bigg]dx-\int^{L}_{0}\bigg[\alpha_{1}V_{x}(\gamma V-P)_{x}-\delta\theta(\gamma V-P)_{x}\bigg]dx\\
&+\int^{L}_{0}\gamma(\gamma\beta V_{xx}-\beta P_{xx})(\gamma V-P)dx+\gamma\int^{L}_{0}\mu P_{tt}(\gamma V-P)dx+\alpha_{1} V_{x}(L,t)(\gamma V-P)(L,t).
\end{align*}
Using the equation \eqref{5.13}, we have $\mu P_{tt}=\beta P_{xx}-\gamma\beta V_{xx}$. Then the third and fourth terms in the above equality become
\begin{align*}
\int^{L}_{0}\gamma(\gamma\beta V_{xx}-\beta P_{xx})(\gamma V-P)dx+\gamma\int^{L}_{0}\mu P_{tt}(\gamma V-P)dx=0.
\end{align*}
After sorting out the above equality, we can get
\begin{align*}
\frac{d}{dt}I_{4}=&\gamma\rho\int^{L}_{0}|V_{t}|^{2}dx-\gamma\mu\int^{L}_{0}|P_{t}|^{2}dx-\int^{L}_{0}\rho V_{t}P_{t}dx+\int^{L}_{0}\gamma^{2}\mu P_{t}V_{t}dx\\
&-\alpha_{1}\int^{L}_{0}V_{x}(\gamma V-P)_{x}dx+\delta\int^{L}_{0}\theta(\gamma V-p)_{x}dx+\alpha_{1}V_{x}(L,t)(\gamma V-P)(L,t).
\end{align*}
Thanks to Poincar\'{e}'s inequality and Young's inequality $ab\leq\frac{a}{4\varepsilon _{i}}+ \varepsilon _{i}b$ with $a,b\in\mathbb{R}$ , $\varepsilon _{i}>0$, $i=4,5,6,7,8$, we obtain
\begin{align*}
\frac{d}{dt}I_{4}\leq& \frac{\alpha_{1}^{2}}{4\varepsilon_{6}}\int^{L}_{0}|V_{x}|^{2}dx+(\varepsilon_{6}+\varepsilon_{7}+\varepsilon_{8})\int^{L}_{0}|\gamma V_{x}-P_{x}|^{2}dx
+\frac{\delta^{2}C_{p}}{4\varepsilon_{7}}\int^{L}_{0}|\theta_{x}|^{2}dx\\
&+\frac{\alpha_{1}^{2}M}{\eta_{4}}\frac{\sin(a\pi)}{2\pi}\int^{\infty}_{-\infty}(\xi^{2}+\eta)\left(l_{1}|\phi_{1}|^{2}+l_{2}|\phi_{2}|^{2}\right)d\xi\\
&+\left(\gamma\rho+\frac{\rho^{2}}{4\varepsilon_{4}}\frac{\gamma^{4}\mu^{2}}{4\varepsilon_{5}}\right)\int^{L}_{0}|V_{t}|^{2}dx
-\left(\gamma\mu-\varepsilon_{4}-\varepsilon_{5} \right)\int^{L}_{0}|P_{t}|^{2}dx.
\end{align*}
The above formula holds for any $\varepsilon _{i}>0$. By defining a new constant $\eta_{4}$ and choosing $\varepsilon _i=\frac{\eta_{4}}{4}$, $i={4,5,6,7,8}$, we obtain the inequality \eqref{6.40} is right.
\end{proof}

Next, we begin to prove the main result of this section. That is, the piezoelectric beam equation system with Fourier's law is exponentially stable. Before that, we give a definition of functional Lyapunov
\begin{align*}
\mathcal{L}(t)=N E(t)+N_{1} I_{1}+N_{2}I_{2}+N_{3} I_{3}+N_{4}I_{4},
\end{align*}
where $E(t)$ is the total energy of the system with Fourier's law heat and the functional $I_{1}$, $I_{2}$, $I_{3}$ and $I_{4}$ are given in Lemma \ref{lem6.1}, Lemma \ref{lem6.2}, Lemma \ref{lem6.3} and Lemma \ref{lem6.4} respectively, and $N$, $N_{i}$ for $i = 1, 2, 3, 4$ are some constants for which we will give a specific range later.
\begin{lemma}\label{lem6.5}
Let $N$ be a large enough constant, then there are positive constants $m_{1}$ and $m_{2}$ such that the functional
\begin{align*}
\mathcal{L}(t)=N E(t)+N_{1} I_{1}+N_{2}I_{2}+N_{3} I_{3}+N_{4}I_{4},
\end{align*}
satisfies
\begin{align*}
m_{1}E(t)\leq \mathcal{L}(t)\leq m_{2}E(t).
\end{align*}
\end{lemma}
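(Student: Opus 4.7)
The plan is to show that each auxiliary functional $I_j$ ($j=1,2,3,4$) can be controlled in absolute value by a constant multiple of $E(t)$, and then choose $N$ large enough so that the dominant $N E(t)$ term swallows the $N_j I_j$ contributions from both above and below. This is a completely standard equivalence-of-Lyapunov-functional step and uses only Cauchy–Schwarz, Young's inequality, and Poincaré's inequality; no fractional-calculus machinery enters here because the $\phi_i$-part of $E(t)$ is positive and only appears in $\mathcal{L}(t)$ through $NE(t)$.

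First I would estimate $|I_1|$: since $V(0,t)=P(0,t)=0$, Poincaré gives $\|V\|_{L^2}\le C_p\|V_x\|_{L^2}$ and $\|P\|_{L^2}\le C_p\|P_x\|_{L^2}$, and writing $P_x = (\gamma V_x - P_x)(-1) + \gamma V_x$ with the elementary inequality already used in Lemma \ref{lem6.1} (see \eqref{6.5}) controls $\|P_x\|_{L^2}^2$ by the energy density. Then Cauchy–Schwarz plus Young yield
\begin{equation*}
|I_1| \le \tfrac{\rho}{2}\|V_t\|_{L^2}^2 + \tfrac{\rho C_p^2}{2}\|V_x\|_{L^2}^2 + \tfrac{\mu}{2}\|P_t\|_{L^2}^2 + \tfrac{\mu C_p^2}{2}\|P_x\|_{L^2}^2 \le C_1 E(t).
\end{equation*}
For $I_2 = \rho c \int_0^L \theta(x,t) \int_x^L V_t(y,t)\,dy\,dx$, Cauchy–Schwarz in $y$ gives $\bigl|\int_x^L V_t\,dy\bigr| \le \sqrt{L}\,\|V_t\|_{L^2}$, and then Cauchy–Schwarz in $x$ together with Poincaré applied to $\theta$ (using $\theta(L,t)=0$) yields $|I_2| \le C_2 E(t)$. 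The functionals $I_3$ and $I_4$ are handled identically: each is a sum of products of a first-order spatial quantity and a velocity (or of two velocities), so pairwise Young plus Poincaré produce $|I_3|\le C_3 E(t)$ and $|I_4| \le C_4 E(t)$.

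Combining these four bounds, set $K := N_1 C_1 + N_2 C_2 + N_3 C_3 + N_4 C_4$. Then by the triangle inequality,
\begin{equation*}
(N-K)\,E(t) \;\le\; \mathcal{L}(t) \;=\; N E(t) + \sum_{j=1}^{4} N_j I_j \;\le\; (N+K)\,E(t).
\end{equation*}
Finally, choose $N > K$, and define $m_1 := N - K > 0$ and $m_2 := N + K > 0$. This yields the desired equivalence $m_1 E(t) \le \mathcal{L}(t) \le m_2 E(t)$.

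I expect no real obstacle: the only thing to keep track of is that the $I_j$ bounds genuinely use every quadratic piece of $E(t)$ (including the $\phi_i$ integrals, which appear in $E(t)$ with a positive sign but do not appear in any $I_j$, so they are harmless for the upper bound and only help for the lower bound). The constants $N_1,\dots,N_4$ will be fixed later in Section 6 when matching the dissipation produced by $E(t)$ against the positive error terms in Lemmas \ref{lem6.1}--\ref{lem6.4}; here we only need that $N$ can be taken as large as we wish independently of $N_1,\dots,N_4$.
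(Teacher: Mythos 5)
Your proposal is correct and follows the same route as the paper: bound the perturbation terms $N_1I_1+\cdots+N_4I_4$ by a constant multiple of $E(t)$ via Cauchy--Schwarz, Young's and Poincar\'{e}'s inequalities, then take $N$ large so that $m_1=N-K$ and $m_2=N+K$ work. The only difference is that you spell out the individual estimates $|I_j|\leq C_jE(t)$, which the paper compresses into a single sentence; your observation that the $\phi_i$-terms of $E(t)$ never appear in the $I_j$ and hence cause no trouble is also consistent with the paper's (implicit) treatment.
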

\begin{proof}
Obviously, from the definition of the functional $\mathcal{L}(t)$, we can show
\begin{align*}
|\mathcal{L}(t)-N E(t)|=|N_{1} I_{1}+N_{2}I_{2}+N_{3} I_{3}+N_{4}I_{4}|.
\end{align*}
By applying Young's inequality and Poincar\'{e}'s inequality, we conclude that there is a constant $C > 0$ such that
\begin{align*}
|\mathcal{L}(t)-N E(t)|\leq C E(t).
\end{align*}
That is,
\begin{align*}
(N-C)E(t)\leq \mathcal{L}(t)\leq (N+C)E(t).
\end{align*}
 Thus, the proof of the lemma is completed.
\end{proof}
\begin{theorem}
Let $(V, V_{t}, \phi_{1}, P, P_{t}, \phi_{2}, \theta)$ be any solution of system \eqref{5.11}-\eqref{5.17}, then there are constants $M > 0$ and $\omega > 0$
independent of the initial conditions such that
\begin{align*}
E(t)\leq E(0)e^{-\omega t}.
\end{align*}
\end{theorem}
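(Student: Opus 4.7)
The plan is to differentiate the Lyapunov functional $\mathcal{L}(t)=NE_1(t)+N_1I_1+N_2I_2+N_3I_3+N_4I_4$ and show that $\frac{d}{dt}\mathcal{L}(t)\leq -c\, E_1(t)$ for an appropriate positive constant $c$, then invoke Lemma \ref{lem6.5} to get $\frac{d}{dt}\mathcal{L}(t)\leq -\frac{c}{m_2}\mathcal{L}(t)$, and conclude by Gr\"onwall's inequality and the equivalence $m_1E_1(t)\leq \mathcal{L}(t)\leq m_2E_1(t)$ that $E_1(t)\leq \frac{m_2}{m_1}E_1(0)e^{-\omega t}$ with $\omega=c/m_2$.

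First I would sum the four derivative estimates in Lemmas \ref{lem6.1}--\ref{lem6.4} (weighted by $N_1,\dots,N_4$) with $N$ times the energy identity \eqref{5.21}. This produces a master inequality for $\mathcal{L}'(t)$ whose right-hand side is a linear combination of the six terms $\int_0^L|V_t|^2\,dx$, $\int_0^L|P_t|^2\,dx$, $\int_0^L|V_x|^2\,dx$, $\int_0^L|\gamma V_x-P_x|^2\,dx$, $\int_0^L|\theta_x|^2\,dx$, and the fractional dissipation $\frac{\sin(a\pi)}{\pi}\int_{-\infty}^{+\infty}(\xi^2+\eta)(l_1|\phi_1|^2+l_2|\phi_2|^2)\,d\xi$. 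To obtain negativity in all six slots, I would choose the parameters in a carefully ordered, hierarchical fashion: first pick $\eta_1<\min\{\alpha_1/(1+\gamma^2),\beta\}$ so that Lemma \ref{lem6.1} yields strictly negative coefficients for $\int|V_x|^2$ and $\int|\gamma V_x-P_x|^2$; next fix $\eta_3$ large enough that $\alpha_1-\alpha_1/(4\eta_3)>0$ in Lemma \ref{lem6.3}; then choose $\eta_2<2\gamma^2\beta^2\delta\rho$ and $\eta_4<2\gamma\mu$ so that the coefficients of $\int|V_t|^2$ in Lemma \ref{lem6.2} and of $\int|P_t|^2$ in Lemma \ref{lem6.4} are strictly negative.

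Next, with $\eta_i$ fixed, I would choose the weights $N_i$ in the order $N_4\gg N_3\gg N_1$ and $N_2$ comparable, so that the negative $\int|P_t|^2$ contribution from $N_4 I_4'$ dominates the positive $\mu N_1$ and $\gamma^2\mu^2 N_3/(4\eta_3)$ contributions from $I_1'$ and $I_3'$, and similarly the negative $\int|V_t|^2$ contribution from $N_2 I_2'$ dominates the positive $(\rho N_1+(\rho+\eta_3)N_3+[\gamma\rho+(\rho^2+\gamma^4\mu^2)/\eta_4]N_4)$ contributions. Once the $|V_t|^2$, $|P_t|^2$, $|V_x|^2$ and $|\gamma V_x-P_x|^2$ terms are negative, I would finally take $N$ large enough that the thermal dissipation $-kN\int|\theta_x|^2\,dx$ beats all the $\int|\theta_x|^2$ terms of order $1/\eta_i$ collected from the four lemmas, and simultaneously $-N$ times the fractional dissipation dominates its positive counterparts. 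Using Poincar\'e's inequality on $\int|\theta|^2\,dx\leq C_p\int|\theta_x|^2\,dx$, all six dissipation terms can then be bounded by a multiple of $-E_1(t)$.

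The main obstacle will be bookkeeping the coupled inequalities: the constants $N_i$ interact nonlinearly through the $1/\eta_j$ factors, so one must verify that the ordering $\eta_1\to\eta_3\to\eta_2,\eta_4\to N_4\to N_3\to N_2\to N_1\to N$ is consistent, i.e.\ that no later choice forces an earlier one to be revised. Once this compatible selection is exhibited, the differential inequality $\mathcal{L}'(t)\leq -\omega\mathcal{L}(t)$ is immediate, integration gives $\mathcal{L}(t)\leq \mathcal{L}(0)e^{-\omega t}$, and Lemma \ref{lem6.5} transfers the decay to $E_1(t)$, completing the proof.
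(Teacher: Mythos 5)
Your overall architecture is exactly the paper's: differentiate $\mathcal{L}=NE+N_1I_1+N_2I_2+N_3I_3+N_4I_4$, combine Lemmas \ref{lem6.1}--\ref{lem6.4} with the energy identity \eqref{5.21}, force all six coefficients negative, and transfer the decay through Lemma \ref{lem6.5} and Gr\"onwall. However, the specific parameter hierarchy you propose fails, and the failure is not a bookkeeping nuisance but a sign error in two of the six slots. You fix $\eta_4$ (and $\eta_2$) as absolute constants \emph{before} sending $N_4\gg N_3\gg N_1$. Then the coefficient of $\int_0^L|\gamma V_x-P_x|^2dx$ in $\mathcal{L}'(t)$ is
\begin{align*}
-N_1(\beta-\eta_1)+N_2\eta_2+N_4\tfrac{3\eta_4}{4},
\end{align*}
and since the only negative contribution is of order $N_1$ while $N_4\tfrac{3\eta_4}{4}\to+\infty$ relative to it for any fixed $\eta_4>0$, this coefficient is eventually positive. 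The same happens for $\int_0^L|V_x|^2dx$: the positive term $N_4\alpha_1^2/\eta_4$ from Lemma \ref{lem6.4} dominates the negative terms $-N_1(\alpha_1-(1+\gamma^2)\eta_1)-N_3(\alpha_1-\alpha_1/(4\eta_3))$ once $N_4\gg N_3$. So with your ordering the master inequality cannot be closed.

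The paper's resolution is to couple the auxiliary parameters to the weights rather than fix them first: it sets $N_1\eta_1=\beta$, $N_2\eta_2=\beta$, $3N_4\eta_4=4\beta$ (and $\eta_3=N_3/2$), which caps the offending positive contributions $N_4\tfrac{3\eta_4}{4}$ and $N_2\eta_2$ at the fixed constant $\beta$ independently of how large $N_2,N_4$ become; the slot for $|\gamma V_x-P_x|^2$ then reduces to $-(N_1-3)\beta$, handled by $N_1>3$. The price is that $N_4\alpha_1^2/\eta_4$ becomes of order $N_4^2$, which forces $N_3$ to be chosen of order $N_4^2$ (so the correct magnitude ordering is roughly $N_3\gg N_4\gg N_1$, the reverse of yours in the $N_3$ versus $N_4$ comparison), after which $N_2$ is taken large enough to absorb the $|V_t|^2$ contributions and $N$ large enough for the thermal and fractional dissipation slots. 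If you either adopt this coupling or redo your selection with $N_3\gtrsim N_4^2/\eta_4$ and $\eta_4$ shrunk proportionally to $1/N_4$, the rest of your argument (Gr\"onwall plus the equivalence $m_1E\le\mathcal{L}\le m_2E$) goes through as stated.
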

\begin{proof}
By employing Lemma \ref{lem6.1}- Lemma \ref{lem6.4}, we arrive at
\begin{align}
\frac{d}{dt}\mathcal{L}(t)=& N \frac{d}{dt}E(t)+N_{1}\frac{d}{dt} I_{1}+N_{2}\frac{d}{dt}I_{2}+N_{3}\frac{d}{dt} I_{3}+N_{4}\frac{d}{dt}I_{4}\notag\\
\leq &-\left(N-N_{1}\frac{ML}{\eta_{1}}-N_2\eta_2 M-N_{3}\frac{4ML\eta_{3}}{\alpha_{1}}-N_{4}\frac{\alpha_{1}^{2}M}{\eta_{4}}\right)
\frac{\sin(a\pi)}{2\pi}\int^{\infty}_{-\infty}(\xi^{2}+\eta)\left(l_{1}|\phi_{1}|^{2}+l_{2}|\phi_{2}|^{2}\right)d\xi\notag\\
&-\left(Nk-N_{1}\frac{\delta^{2}C_p}{2\eta_{1}}-N_{2}C_{\eta_{2}}-N_{3}\frac{4C_{p}\delta^{2}\eta_{3}}{\alpha_{1}}-N_{4}\frac{C_{p}\delta^{2}}{\eta_{4}}\right)\int^{L}_{0}|\theta_{x}|^{2}dx\notag\\
&-\left(N_{2}\delta\rho-N_{2}\frac{\eta_{2}}{2\gamma^{2}\beta^{2}}-N_{1}\rho-N_{3}\rho-N_{3}\eta_{3}-N_{4}\gamma\rho-N_{4}\frac{\rho^{2}+\gamma^{4}\mu^{2}}{\eta_{4}}\right)\int^{L}_{0}|V_{t}|^{2}dx\notag\\
&-\left(N_{4}\gamma\mu-N_{4}\frac{\eta_{4}}{2}-N_{1}\mu-N_{3}\frac{\gamma^{2}\mu^{2}}{4\eta_{4}}\right)\int^{L}_{0}|P_{t}|^{2}dx\notag\\
&-\left(N_{1}\alpha_{1}-N_{1}(1+\gamma^{2})\eta_{1}-N_{2}\eta_{2}+N_{3}\alpha_{1}-N_{3}\frac{\alpha_{1}}{4\eta_{3}}-N_{4}\frac{\alpha^{2}_{1}}{\eta_{4}}\right)\int^{L}_{0}|V_{x}|^{2}dx\notag\\
&-\left(N_{1}\beta-N_{1}\eta_{1}-N_{2}\eta_{2}-N_{4}\frac{3\eta_{4}}{4}\right)\int^{L}_{0}|\gamma V_{x}-P_{x}|^{2}dx,\label{ly0}
\end{align}
where $C_{\eta_{2}}=C_{p}c\delta+\frac{\gamma^{2}\beta^{2}\rho^{2}k^{2}}{2\eta_{2}}+\frac{c^{2}C_{p}\alpha^{2}_{1}}{2\eta_{2}}+\frac{c^2 C_p L}{2\eta_2}$. In order to make the terms in \eqref{ly0} the same as those in the energy $E(t)$, we focus on the first term at the right hand side of \eqref{ly0}. By using the fact that $\eta,l_1,l_2>0, a\in(0,1)$, we obtain
\begin{align*}
-\frac{\sin(a\pi)}{2\pi}\int^{\infty}_{-\infty}(\xi^{2}+\eta)\left(l_{1}|\phi_{1}|^{2}+l_{2}|\phi_{2}|^{2}\right)d\xi\leq
-\eta \frac{\sin(a\pi)}{2\pi}\int^{\infty}_{-\infty}\left(l_{1}|\phi_{1}|^{2}+l_{2}|\phi_{2}|^{2}\right)d\xi.
\end{align*}
Then, from the above discussions, \eqref{ly0} can be rewritten as follows
\begin{align}
\frac{d}{dt}\mathcal{L}(t)
\leq &-\left(N-N_{1}\frac{ML}{\eta_{1}}-N_2 \eta_2 M-N_{3}\frac{4ML\eta_{3}}{\alpha_{1}}-N_{4}\frac{\alpha_{1}^{2}M}{\eta_{4}}\right)
\eta \frac{\sin(a\pi)}{2\pi}\int^{\infty}_{-\infty}\left(l_{1}|\phi_{1}|^{2}+l_{2}|\phi_{2}|^{2}\right)d\xi\notag\\
&-\left(Nk-N_{1}\frac{\delta^{2}C_p}{2\eta_{1}}-N_{2}C_{\eta_{2}}-N_{3}\frac{4C_{p}\delta^{2}\eta_{3}}{\alpha_{1}}-N_{4}\frac{C_{p}\delta^{2}}{\eta_{4}}\right)\int^{L}_{0}|\theta_{x}|^{2}dx\notag\\
&-\left(N_{2}\delta\rho-N_{2}\frac{\eta_{2}}{2\gamma^{2}\beta^{2}}-N_{1}\rho-N_{3}\rho-N_{3}\eta_{3}-N_{4}\gamma\rho-N_{4}\frac{\rho^{2}+\gamma^{4}\mu^{2}}{\eta_{4}}\right)\int^{L}_{0}|V_{t}|^{2}dx\notag\\
&-\left(N_{4}\gamma\mu-N_{4}\frac{\eta_{4}}{2}-N_{1}\mu-N_{3}\frac{\gamma^{2}\mu^{2}}{4\eta_{4}}\right)\int^{L}_{0}|P_{t}|^{2}dx\notag\\
&-\left(N_{1}\alpha_{1}-N_{1}(1+\gamma^{2})\eta_{1}-N_{2}\eta_{2}+N_{3}\alpha_{1}-N_{3}\frac{\alpha_{1}}{4\eta_{3}}-N_{4}\frac{\alpha^{2}_{1}}{\eta_{4}}\right)\int^{L}_{0}|V_{x}|^{2}dx\notag\\
&-\left(N_{1}\beta-N_{1}\eta_{1}-N_{2}\eta_{2}-N_{4}\frac{3\eta_{4}}{4}\right)\int^{L}_{0}|\gamma V_{x}-P_{x}|^{2}dx.\label{ly1}
\end{align}
Choose the appropriate constants as
\begin{align*}
\frac{N_{3}}{2}=\eta_{3},\quad 3N_{4}\eta_{4}=4\beta,\quad N_{1}\eta_{1}=\beta, \quad N_{2}\eta_{2}=\beta.
\end{align*}
And we simply denote $\frac{3 \alpha^{2}_{1}N_{4}}{4\beta}$ as $C_{N_{4}}$, then \eqref{ly1} can be rewritten as
\begin{align*}
\frac{d}{dt}\mathcal{L}(t)
\leq &-\left(N-N_{1}\frac{ML}{\eta_{1}}-N_2 \eta_2 M-N_{3}\frac{4ML\eta_{3}}{\alpha_{1}}-N_{4}\frac{\alpha_{1}^{2}M}{\eta_{4}}\right)
\eta \frac{\sin(a\pi)}{2\pi}\int^{\infty}_{-\infty}\left(l_{1}|\phi_{1}|^{2}+l_{2}|\phi_{2}|^{2}\right)d\xi\\
&-\left(Nk-N_{1}\frac{\delta^{2}C_p}{2\eta_{1}}-N_{2}C_{\eta_{2}}-N_{3}\frac{4C_{p}\delta^{2}\eta_{3}}{\alpha_{1}}-N_{4}\frac{C_{p}\delta^{2}}{\eta_{4}}\right)\int^{L}_{0}|\theta_{x}|^{2}dx\\
&-\left(N_{2}\delta\rho-\frac{1}{2\gamma^{2}\beta}-N_{1}\rho-N_{3}\rho-N_{3}\eta_{3}-N_{4}\gamma\rho-\frac{3N^{2}_{4}}{4\beta}\left(\rho^{2}+\gamma^{4}\mu^{2}\right)\right)\int^{L}_{0}|V_{t}|^{2}dx\\
&-\left(N_{4}\gamma\mu-\frac{2\beta}{3}-N_{1}\mu-\frac{\gamma^{2}\mu^{2}}{8}\right)\int^{L}_{0}|P_{t}|^{2}dx\\
&-\left(N_{1}\alpha_{1}-(2+\gamma^{2})\beta+N_{3}\alpha_{1}-\frac{\alpha_{1}}{2}-C_{N_{4}}N_{4}\right)\int^{L}_{0}|V_{x}|^{2}dx\\
&-\left(N_{1}\beta-3\beta\right)\int^{L}_{0}|\gamma V_{x}-P_{x}|^{2}dx.
\end{align*}
Let
\begin{align*}
\lambda_{1}:&=N-N_{1}\frac{ML}{\eta_{1}}-N_2 \eta_2 M-N_{3}\frac{4ML\eta_{3}}{\alpha_{1}}-N_{4}\frac{\alpha_{1}^{2}M}{\eta_{4}};\\
\lambda_{2}:&=Nk-N_{1}\frac{\delta^{2}C_p}{2\eta_{1}}-N_{2}C_{\eta_{2}}-N_{3}\frac{4C_{p}\delta^{2}\eta_{3}}{\alpha_{1}}-N_{4}\frac{C_{p}\delta^{2}}{\eta_{4}};\\
\lambda_{3}:&=N_{2}\delta\rho-\frac{1}{2\gamma^{2}\beta}-N_{1}\rho-N_{3}\rho-N_{3}\eta_{3}-N_{4}\gamma\rho-\frac{3N^{2}_{4}}{4\beta}\left(\rho^{2}+\gamma^{4}\mu^{2}\right);\\
\lambda_{4}:&=N_{4}\gamma\mu-\frac{2\beta}{3}-N_{1}\mu-\frac{\gamma^{2}\mu^{2}}{8};\\
\lambda_{5}:&=N_{1}\alpha_{1}-(2+\gamma^{2})\beta+N_{3}\alpha_{1}-\frac{\alpha_{1}}{2}-C_{N_{4}}N_{4};\\
\lambda_{6}:&=N_{1}\beta-3\beta.
\end{align*}
Now, we will choose suitable constants $N$ and $N_{i}\ (i=1,2,3,4)$ so that $\lambda_{i}\ (i=1,...,6)$ are all positive. From $\lambda_{6}>0$, we can obtain that $N_{1}$ should satisfy
\begin{align}\label{ly2}
N_{1}>3.
\end{align}
In order to achieve $\lambda_{4}>0$, we use the result in \eqref{ly2}, and obtain that $N_{4}$ satisfies
\begin{align}\label{ly3}
N_{4}>\frac{2\beta}{3\gamma\mu}+\mu N_{1}+\frac{\gamma\mu}{8}.
\end{align}
By using similar techniques, we have that $N_{3}$ and $N_{2}$ satisfies
\begin{align*}
N_{3}&>\frac{(2+\gamma^{2})\beta}{\alpha_{1}}+\frac{1}{2}+\frac{C_{N_{4}}N_{4}}{\alpha_{1}}-N_{1},\\
N_{2}&>\frac{1}{2\gamma^{2}\beta\delta\rho}+\frac{ N_{1}}{\delta}+\frac{ N_{3}}{\delta}+\frac{ N_{3}^{2}}{2\delta\rho}+\frac{\gamma N_{4}}{\delta}+\frac{3 N_{4}^{2}}{4\beta\delta\rho}\left(\rho^{2}+\gamma^{4}\mu^{2}\right).
\end{align*}
Once $N_i, i={1,2,3,4}$ are fixed, we choose $N$ sufficiently large, which can ensured that $\lambda_{1},\lambda_{2}>0$. Then, we can conclude that there exists a constant $N_{0}:=2\min\{{\lambda_{1}\eta}, \frac{\lambda_{2}}{c}, \frac{\lambda_{3}}{\rho}, \frac{\lambda_{4}}{\mu}, \frac{\lambda_{5}}{\alpha_{1}}, \frac{\lambda_{6}}{\beta}\}$ such that
\begin{align*}
\frac{d}{dt}\mathcal{L}(t)\leq -N_{0}E(t), \quad t\geq 0.
\end{align*}
Thanks to Lemma  \ref{lem6.5}, it is straightforward to verify that there exist constants $M > 0$ and $\omega > 0$ independent of the initial conditions such that
\begin{align*}
E(t)\leq E(0)e^{-\omega t}.
\end{align*}
 Thus, the proof of the theorem is completed.
\end{proof}

\subsection*{Acknowledgments}

This work was supported by the National Natural Science Foundation of China [grant number 11771216], the Key Research and Development Program of Jiangsu Province (Social Development) [grant number BE2019725], the Qing Lan Project of Jiangsu Province and
Postgraduate Research and Practice Innovation Program of Jiangsu Province (Grant No. KYCX20\_0945).
Special thanks go to Prof. Mohammad Akil for his valuable suggestions on Theorem \ref{thm4.5}.

\subsection*{Data Availability}
The data that support the findings of this study are available within the article.

\end{document}